\DeclareSymbolFontAlphabet{\mathbb}{AMSb}
\DeclareSymbolFontAlphabet{\mathbbl}{bbold}
\renewcommand{\epsilon}{\varepsilon}
\renewcommand{\phi}{\varphi}
\setlist[enumerate]{label*=(\roman*)}
\definecolor{darkblue}{rgb}{0.2,0,0.6}
\definecolor{darkgreen}{rgb}{0.2,0.5,0.2}
\crefname{equation}{}{}
\crefname{table}{Table}{Tables}
\crefname{section}{Section}{Sections}
\theoremstyle{definition}
\newtheorem{theorem}{Theorem}[section]
\crefname{theorem}{Theorem}{Theorems}
\newtheorem{definition}[theorem]{Definition}
\crefname{definition}{Definition}{Definitions}
\newtheorem{lemma}[theorem]{Lemma}
\crefname{lemma}{Lemma}{Lemmas}
\newtheorem{corollary}[theorem]{Corollary}
\crefname{corollary}{Corollary}{Corollaries}
\newtheorem{proposition}[theorem]{Proposition}
\crefname{proposition}{Proposition}{Propositions}
\newtheorem{remark}[theorem]{Remark}
\crefname{remark}{Remark}{Remarks}
\newtheorem{example}[theorem]{Example}
\crefname{example}{Example}{Examples}
\newtheorem{notation}[theorem]{Notation}
\crefname{notation}{Notation}{Notations}
\tikzstyle{huge} = [row sep={3.6em}, column sep={3.6em}]
\tikzstyle{large} = [row sep={2.7em}, column sep={2.7em}]
\tikzstyle{normal} = [row sep={1.8em}, column sep={1.8em}]
\tikzstyle{scriptsize} = [row sep={1.35em}, column sep={1.35em}]
\tikzstyle{small} = [row sep={0.9em}, column sep={0.9em}]
\tikzstyle{tiny} = [row sep={0.45em}, column sep={0.45em}]
\def\l@subsection{\@tocline{2}{0pt}{2pc}{6pc}{}}
\renewcommand{\-}{\mathchar`-}
\newcommand{\A}{\mathscr{A}}
\newcommand{\B}{\mathscr{B}}
\newcommand{\C}{\mathscr{C}}
\newcommand{\E}{\mathscr{E}}
\newcommand{\bA}{\mathbb{A}}
\newcommand{\bB}{\mathbb{B}}
\newcommand{\bF}{\mathbb{F}}
\newcommand{\bG}{\mathbb{G}}
\newcommand{\bI}{\mathbb{I}}
\newcommand{\bN}{\mathbb{N}}
\newcommand{\bS}{\mathbb{S}}
\newcommand{\bT}{\mathbb{T}}
\newcommand{\bX}{\mathbb{X}}
\newcommand{\bZ}{\mathbb{Z}}
\newcommand{\Set}{\mathbf{Set}}
\newcommand{\Cat}{\mathbf{Cat}}
\newcommand{\CAT}{\mathbf{CAT}}
\newcommand{\Mon}{\mathbf{Mon}}
\newcommand{\Grp}{\mathbf{Grp}}
\newcommand{\Ab}{\mathbf{Ab}}
\newcommand{\Ring}{\mathbf{Ring}}
\newcommand{\Pos}{\mathbf{Pos}}
\newcommand{\POS}{\mathbf{POS}}
\newcommand{\Top}{\mathbf{Top}}
\newcommand{\Quiv}{\mathbf{Quiv}}
\newcommand{\End}{\mathbf{End}}
\newcommand{\Cospan}{\mathbf{Cospan}}
\newcommand{\Lat}{\mathbf{Lat}}
\newcommand{\SLat}{\mathbf{SLat}}
\newcommand{\BLat}{\mathbf{Lat_{0,1}}}
\newcommand{\BSLat}{\mathbf{SLat_0}}
\newcommand{\1}{\mathbbl{1}}
\newcommand{\2}{\mathbbl{2}}
\newcommand{\pow}{\mathscr{P}}
\newcommand{\op}{\mathrm{op}}
\newcommand{\ob}{\mathrm{ob}}
\newcommand{\arr}[1]{\overset{#1}{\rightarrow}}
\newcommand{\longarr}[1]{\mathrel{
\tikz\draw[->] (0,0) -- node[above=1.8pt,inner sep=0pt] {\small$#1$} (1,0);
}}
\newcommand{\pto}{\rightharpoonup} 
\newcommand{\const}[1]{\ulcorner #1\urcorner}
\newcommand{\id}{\mathrm{id}}
\newcommand{\Lim}[1]{\mathop{\underset{#1}{\mathrm{Lim}}}}
\newcommand{\Colim}[1]{\mathop{\underset{#1}{\mathrm{Colim}}}}
\newcommand{\Lan}{\mathop{\mathrm{Lan}}}
\newcommand{\incat}[1]{~~\text{in }{#1}}
\newcommand{\fp}[1]{{#1}_\mathrm{fp}}
\newcommand{\card}[1]{\sharp{#1}}
\newcommand{\tup}[1]{\vec{#1}}
\newcommand{\intpn}[2]{\left\llbracket #1 \right\rrbracket_{#2}}
\newcommand{\defined}{\mathord{\downarrow}}
\newcommand{\seq}[1]{\mathrel{
\tikz\draw[|-] (0,0) -- node[above=1.8pt,inner sep=0pt] {\small$#1$} (1,0);
}}
\newcommand{\biseq}[1]{\mathrel{
\tikz\draw[|-|] (0,0) -- node[above=1.8pt,inner sep=0pt] {\small$#1$} (1,0);
}}
\newcommand{\PStr}{\mathrm{PStr}}
\newcommand{\PMod}{\mathrm{PMod}}
\newcommand{\pos}{\mathrm{pos}}
\newcommand{\ar}{\mathrm{ar}}
\newcommand{\type}{\mathrm{type}}
\newcommand{\Alg}{\mathop{\mathit{Alg}}}
\newcommand{\pht}[2]{{\bT_{#1,#2}}} 
\newcommand{\cloprod}{\mathbf{P}}
\newcommand{\clocsub}[1]{\mathbf{S_{#1}}}
\newcommand{\clolret}[1]{\mathbf{H_{#1}}}
\newcommand{\conn}[1]{{#1_{\mathrm{conn}}}}
\newcommand{\Fam}[1]{\mathbf{Fam}(#1)}
\newcommand{\lowset}[1]{\mathop{\downarrow}#1}
\newcommand{\lowlat}[1]{\mathbb{L}(#1)}
\newcommand{\eslowlat}[1]{\mathrm{Es}\mathbb{L}(#1)}
\newcommand{\fglowlat}[1]{\mathrm{Fg}\mathbb{L}(#1)}
\newcommand{\plowlat}[1]{\mathrm{P}\mathbb{L}(#1)}
\newcommand{\atom}[1]{\mathrm{Atom}(#1)}
\newcommand{\opensub}[1]{\mathcal{O}\mathrm{Sub}(#1)}
\newcommand{\sub}[1]{\mathrm{Sub}(#1)}
\newcommand{\aut}[1]{\mathrm{Aut}(#1)}
\newcommand{\fpconn}[1]{{#1}_{\mathrm{fpc}}}
\newcommand{\degree}{\mathop{\mathrm{deg}}}
\newcommand{\multrow}[1]{
\begin{tabular}{c}
    #1
\end{tabular}
}
\title{Filtered colimit elimination from Birkhoff's variety theorem}
\author{Yuto Kawase}
\address{Research Institute for Mathematical Sciences, Kyoto University, Kyoto 606-8502, Japan}
\email{ykawase@kurims.kyoto-u.ac.jp}
\date{\today}
\keywords{Birkhoff's variety theorem, HSP theorem, filtered colimit, partial Horn theory, locally finitely presentable category, pure quotient, universal algebra, equational theory, ordered algebra}
\thanks{The author wishes to express his thanks to his supervisor Masahito Hasegawa for his support and acknowledges many helpful suggestions of Kengo Hirata and Ryuya Hora.}
\subjclass[2020]{
18C10, 
18C35, 
18E45
}
\begin{document}
\begin{abstract}
    Birkhoff's variety theorem, a fundamental theorem of universal algebra, asserts that a subclass of a given algebra is definable by equations if and only if it satisfies specific closure properties.
    In a generalized version of this theorem, closure under filtered colimits is required.
    However, in some special cases, such as finite-sorted equational theories and ordered algebraic theories, the theorem holds without assuming closure under filtered colimits.
    We call this phenomenon ``filtered colimit elimination,'' and study a sufficient condition for it.
    We show that if a locally finitely presentable category $\A$ satisfies a noetherian-like condition, then filtered colimit elimination holds in the generalized Birkhoff's theorem for algebras \mbox{relative to $\A$.}
\end{abstract}

\maketitle

\tableofcontents

\section{Introduction}

The relative algebraic theories introduced in \cite{kawase2023birkhoffs} are algebraic concepts relative to locally finitely presentable categories, and they are also a syntactic description of finitary monads on locally finitely presentable categories.
Since it is well-known that finitary monads on $\Set$, the category of sets, are equivalent to classical equational theories \cite{linton1969outline}, relative algebraic theory can be regarded as a generalization of equational theory.
That is, classical equational theories are $\Set$-relative algebraic theories in \cite{kawase2023birkhoffs}.

Birkhoff's variety theorem \cite{birkhoff1935structure} is a fundamental result for equational theories and characterizes equational classes via specific closure properties.
A generalization of Birkhoff's variety theorem to relative algebraic theory appears in \cite{kawase2023birkhoffs}.
In such a generalized Birkhoff's theorem, the closure property under filtered colimits is required even though it is not required in the original version of Birkhoff's theorem \cite{birkhoff1935structure}.
In other words, the closure property under filtered colimits can be eliminated from the assumption of the $\Set$-relative case.
We shall call such a phenomenon \emph{filtered colimit elimination}.
Moreover, filtered colimit elimination holds not only in the $\Set$-relative case but also in some other versions.
The first such example is the case of finite-sorted equational theories \cite{adamek2012birkhoffs}, called $\Set^n$-relative algebraic theories in \cite{kawase2023birkhoffs}.
The second example is the case of ordered algebraic theories \cite{bloom1976varieties}, called $\Pos$-relative algebraic theories in \cite{kawase2023birkhoffs}.

In this paper, we study a sufficient condition on a locally finitely presentable category $\A$ for which filtered colimit elimination holds in $\A$-relative algebraic theories.
Our sufficient condition is like a noetherian condition.
A category $\A$ satisfies the \emph{ascending chain condition} (ACC) if it has no strictly ascending sequence (\cref{def:ACC_for_cat}).
We will see that if a locally finitely presentable category $\A$ satisfies ACC, then filtered colimit elimination holds in Birkhoff's theorem for $\A$-relative algebraic theories (\cref{thm:filcolim_elim}), which is our main theorem.
More precisely, that is stated in the following form:
\begin{theorem}[Filtered colimit elimination]\label{thm:intro_main_theorem}
    Let $\rho\colon\bS\to\bT$ be a theory morphism between partial Horn theories.
    Assume that $\bS\-\PMod$ satisfies the ascending chain condition.
    Then, for every replete full subcategory $\E\subseteq\bT\-\PMod$, the following are equivalent:
    \begin{enumerate}
        \item\label{thm:intro_main_theorem-1}
        $\E$ is definable by $\rho$-relative judgments.
        \item\label{thm:intro_main_theorem-2}
        $\E\subseteq\bT\-\PMod$ is closed under products, $\bT$-closed subobjects and \emph{$U^\rho$-local retracts}.
        \item\label{thm:intro_main_theorem-3}
        $\E\subseteq\bT\-\PMod$ is closed under products, $\bT$-closed subobjects, $U^\rho$-retracts and filtered colimits.
    \end{enumerate}
\end{theorem}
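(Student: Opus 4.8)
The plan is to prove the three statements equivalent by establishing the cycle (i) $\Rightarrow$ (ii) $\Rightarrow$ (iii) $\Rightarrow$ (i), arranging the argument so that the ascending chain condition is invoked in exactly one implication, namely the passage from $U^\rho$-local retracts to filtered colimits. Conditions (i) and (iii) are, respectively, the definability and the closure halves of the generalized Birkhoff theorem for partial Horn theories already established in \cite{kawase2023birkhoffs}, so I would dispatch (iii) $\Rightarrow$ (i) by citing the completeness half of that theorem verbatim; this implication uses no noetherian hypothesis. All of the genuinely new content, and the entire role of ACC, is concentrated in (ii) $\Rightarrow$ (iii).

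For (i) $\Rightarrow$ (ii), the preservation of $\rho$-relative definability under products and under $\bT$-closed subobjects is inherited from the soundness half of \cite{kawase2023birkhoffs}, so the only new point is closure under $U^\rho$-local retracts. Here I would unwind the semantics of a $\rho$-relative judgment and check directly that if $B$ is a $U^\rho$-local retract of some $A\in\E$, then every judgment holding in $A$ holds in $B$. The key observation is that a $\rho$-relative judgment is \emph{finitary}: its satisfaction is tested against finitely presentable contexts in $\A$. A hypothetical failure in $B$ is therefore detected on a single finitely presentable probe, on which the local section witnessing the retraction is an honest split monomorphism; transporting the failure along it produces a failure in $A$, a contradiction.

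The core of the argument is (ii) $\Rightarrow$ (iii). Closure under $U^\rho$-retracts is immediate, since a genuine $U^\rho$-retract is in particular a $U^\rho$-local retract. The substance is the following key lemma: \emph{under ACC on $\bS\-\PMod$, a class closed under products, $\bT$-closed subobjects, and $U^\rho$-local retracts is closed under filtered colimits.} Given $A=\Colim{i}A_i$ with all $A_i\in\E$, the plan is to exhibit $A$ as a $U^\rho$-local retract of an object already in $\E$, built from the $A_i$ by a product followed by a $\bT$-closed subobject. Classically one would only embed $A$ into a \emph{reduced} product of the $A_i$, that is, into a quotient of $\prod_i A_i$; but the relative setting does not permit forming such quotients, which is exactly why condition (iii) must list filtered colimits separately. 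The role of ACC is to collapse the reduced product back onto the honest product by forcing the ``eventual'' gluing data to be realized already at a single stage of the diagram.

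The hard part, and the precise point where ACC is indispensable, is producing the $U^\rho$-local section. Since $U^\rho$ preserves filtered colimits, $U^\rho A=\Colim{i}U^\rho A_i$, and splitting the comparison map \emph{locally} amounts to splitting it after precomposition with every finitely presentable probe $P\to U^\rho A$. Each such probe factors through some $U^\rho A_i$ by finite presentability, but the stage $i$ is a priori unstable, and it is this instability that in general blocks the construction of a coordinatewise section and forces a passage to the reduced product. The absence of strictly ascending sequences is what removes the instability: the ascending chain of images in $U^\rho A$ of the finitely presentable subobjects must stabilize, pinning each probe to a single stage and rendering the factorizations mutually coherent, so that they assemble into a genuine local section into the product rather than into a quotient of it. I expect the main technical difficulty to lie in three places: verifying that these stabilized factorizations glue to one well-defined $U^\rho$-local section; ensuring the resulting witness respects the full $\bT$-structure and not merely the $\bS$-reduct seen by $U^\rho$; and, if the product itself does not suffice, identifying the correct $\bT$-closed subobject of it to retract onto. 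It is exactly here that local Noetherianity, rather than mere local finite presentability, is used.
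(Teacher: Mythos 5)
Your overall architecture coincides with the paper's: \ref{thm:intro_main_theorem-3}$\implies$\ref{thm:intro_main_theorem-1} by citing the generalized Birkhoff theorem of \cite{kawase2023birkhoffs}, \ref{thm:intro_main_theorem-1}$\implies$\ref{thm:intro_main_theorem-2} by a finitary-probe argument (the paper phrases this via orthogonality to a family of epimorphisms $F^\rho(\Gamma_i)\to\Delta_i$ with $\Gamma_i$ finitely presentable in $\bS\-\PMod$, but your semantic version --- lift the premise along the local retraction, apply the judgment upstairs, push the Horn conclusion forward along the homomorphism --- is sound and essentially equivalent), and all of ACC concentrated in \ref{thm:intro_main_theorem-2}$\implies$\ref{thm:intro_main_theorem-3}. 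Even your high-level plan for that core implication --- exhibit $A=\Colim{I\in\bI}A_I$ as a $U^\rho$-local retract of a $\bT$-closed subobject of $\prod_I A_I$ --- is exactly what the paper does.

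However, your execution of \ref{thm:intro_main_theorem-2}$\implies$\ref{thm:intro_main_theorem-3} has a genuine gap, in two respects. First, you never produce the morphism onto $A$ that is to be locally split: there is no canonical map $\prod_I A_I\to A$ (products do not map to colimits), so ``splitting the comparison map \dots\ into the product'' does not typecheck; manufacturing an object that simultaneously sits $\bT$-closedly inside the product \emph{and} maps onto $A$ is precisely the missing construction, not a detail to be checked afterwards. Second, the mechanism you attribute to ACC --- that ``the ascending chain of images in $U^\rho A$ of the finitely presentable subobjects must stabilize, pinning each probe to a single stage'' --- is not what the hypothesis says, and it fails under it: ACC here is the ascending chain condition on strongly connected components of the category $\bS\-\PMod$, not a noetherian condition on subobjects of a fixed object, and finitely presentable probes into a filtered colimit are not pinned to a uniform stage by it. What ACC actually buys (\cref{lem:diagram_strongly_connected}) is that, after restricting to a final subdiagram, all the $U^\rho(A_I)$ are strongly connected to \emph{one another}, i.e., there exist morphisms $U^\rho(A_J)\to U^\rho(A_I)$ even when $I$ is not above $J$. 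These backward morphisms are the whole point: for each $J$ one forms the limit $B_J=\Lim{I\in\bI_J}A_I$ over a modified poset $\bI_J$ in which objects not above $J$ are made discrete, and uses the backward morphisms as the missing components of a section of $U^\rho(\pi_J)$, so that the projection $\pi_J\colon B_J\to A_J$ is a $U^\rho$-retraction. Each $B_J$ is a strong, hence $\bT$-closed, subobject of $\prod_I A_I$; the filtered colimit $\Colim{J\in\bI}B_J$ remains a $\bT$-closed subobject of the product (filtered colimits of $\bT$-closed monomorphisms are $\bT$-closed), hence lies in $\E$ by closure under products and $\bT$-closed subobjects; and $\Colim{J\in\bI}\pi_J\colon\Colim{J\in\bI}B_J\to A$ is a filtered colimit of $U^\rho$-retractions, hence a $U^\rho$-local retraction by \cref{prop:locret_filtered_colim_of_retracts}, giving $A\in\E$. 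Without this construction (or an equivalent substitute), your sketch of the core implication does not go through; your mention of reduced products is a red herring relative to this argument, which never forms any quotient of the product.
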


Note that the equivalence between \ref{thm:intro_main_theorem-1} and \ref{thm:intro_main_theorem-3} is the generalized Birkhoff's theorem stated in \cite{kawase2023birkhoffs}.
In the condition \ref{thm:intro_main_theorem-2}, closure under filtered colimits is eliminated, however, $U^\rho$-retracts are replaced with \emph{$U^\rho$-local retracts}, which are broader concept than $U^\rho$-retracts.
We emphasize that such replacement is necessary.
In fact, even if the base category $\bS\-\PMod$ satisfies the ascending chain condition, closure under products, $\bT$-closed subobjects, and $U^\rho$-retracts does not imply closure under filtered colimits (\cref{rem:locret_is_essential}).

We now give an overview of the contents of the paper.
In \cref{section2}, we recall some basic notions such as partial Horn theories \cite{palmgren2007partial} and relative algebraic theories \cite{kawase2023birkhoffs}.
We begin in \cref{section3} to recall the notion of \emph{local retractions} (also called \emph{pure quotients} in \cite{adamek2004purequotient}) and to prove some basic properties and a characterization of them.
Local retractions are a kind of epimorphism and will play a crucial role in filtered colimit elimination as described above.
Moreover, we will see that in the hierarchy of epimorphisms, local retractions lie between retractions and regular epimorphisms in a locally finitely presentable category.

In \cref{section4}, we introduce with several examples, the ascending chain condition for categories, which is a sufficient condition for filtered colimit elimination.
This condition is closely related to the strong connectedness of objects.
Two objects $X$ and $Y$ are called \emph{strongly connected} if there are morphisms $X\to Y$ and $Y\to X$.
Now, the ascending chain condition for a category $\A$ is equivalent to saying that the partially ordered class of all strongly connected components in $\A$, denoted by $\sigma(\A)$, satisfies the ordinary ascending chain condition.
More on strong connectedness will be discussed later in \cref{section6}.

In \cref{section5}, we give our main result, filtered colimit elimination described above.
This generalizes some versions of Birkhoff's theorem for the finite-sorted algebras \cite{adamek2012birkhoffs} and the ordered algebras \cite{bloom1976varieties}.
Filtered colimit elimination also provides an important corollary: an HSP-type formalization of the generalized Birkhoff's theorem.
The original version of Birkhoff's theorem not only characterizes equational classes via the closure property under products, subobjects, and quotient but also identifies the closure of a subclass $\E$ under them as $\clolret{}\clocsub{}\cloprod(\E)$, where $\clolret{}$, $\clocsub{}$, $\cloprod$ are the closure operators under quotients, subobjects, products respectively.
By virtue of filtered colimit elimination, we get a similar statement;
the closure of $\E$ under products, $\bT$-closed subobjects and $U^\rho$-local retracts can be simply written as $\clolret{\rho}\clocsub{\bT}\cloprod(\E)$, where $\clolret{\rho}$, $\clocsub{\bT}$, $\cloprod$ are the closure operators under $U^\rho$-local retracts, $\bT$-closed subobjects, products respectively (\cref{cor:HSP-type_formalization}).

At the end of \cref{section5}, we get another surprising result.
We see that under a suitable condition, filtered colimit elimination for $\A$-relative algebraic theories implies the ascending chain condition for the finitely presentable objects in $\A$ (\cref{thm:converse_fil_colim_elim}).
This means that our sufficient condition for filtered colimit elimination is also a nearly necessary condition.

Section \ref{section6} is devoted to some computations of strongly connected components.
It is difficult to determine all strongly connected components in a general category, however, it is somewhat better in a special class of categories, \emph{locally connected categories}.
Locally connected categories are equivalent to the free coproduct-cocompletions of some category \cite{carbonivitale1998}, which particularly include all presheaf categories and categories of topological group actions on a set.
In \cref{section6}, we compute the partially ordered class of strongly connected components in a locally connected category and get a necessary and sufficient condition for a locally connected category to satisfy the ascending chain condition (\cref{cor:loc.conn.cat_ACC}).

For convenience, we summarize the main contributions as follows:
\begin{itemize}
    \item
    We eliminate the closure under filtered colimits from Birkhoff's variety theorem for relative algebraic theories \cite{kawase2023birkhoffs} in some special cases (\cref{thm:filcolim_elim}).
    Consequently, we generalize other Birkhoff-type theorems for finite-sorted algebras \cite{adamek2012birkhoffs} and ordered algebras \cite{bloom1976varieties}.
    \item
    We get an HSP-type formalization of Birkhoff's theorem for relative algebraic theories (\cref{cor:HSP-type_formalization}).
\end{itemize}

\section{Preliminaries}\label{section2}
The content of this section is based on the author's previous preprint \cite{kawase2023birkhoffs}.

\subsection{Partial Horn theories}

We now begin with recalling \emph{partial Horn theories} as presented in \cite{palmgren2007partial}.
For more detail on partial Horn theories, we refer the reader to \cite{palmgren2007partial}.

\begin{definition}
    Let $S$ be a set.
    An \emph{$S$-sorted signature} $\Sigma$ consists of:
    \begin{itemize}
        \item a set $\Sigma_\mathrm{f}$ of function symbols,
        \item a set $\Sigma_\mathrm{r}$ of relation symbols
    \end{itemize}
    such that
    \begin{itemize}
        \item for each $f\in\Sigma_\mathrm{f}$, an arity $f\colon s_1\times\dots\times s_n\to s\,(n\in\bN, s_i,s\in S)$ is given;
        \item for each $R\in\Sigma_\mathrm{r}$, an arity $R\colon s_1\times\dots\times s_n\,(n\in\bN, s_i\in S)$ is given.
    \end{itemize}
\end{definition}

Given the set $S$ of sorts, we fix an $S$-sorted set $\mathrm{Var}=(\mathrm{Var}_s)_{s\in S}$ such that $\mathrm{Var}_s$ is countably infinite for each $s\in S$.
We assume $\mathrm{Var}_s\cap\mathrm{Var}_{s'}=\varnothing$ if $s\neq s'$.
An element $x\in\mathrm{Var}_s$ is called a \emph{variable of sort $s$}.
The notation $x{:}s$ means that $x$ is a variable of sort $s$.

\begin{definition}
    Let $\Sigma$ be an $S$-sorted signature.
    \begin{enumerate}
        \item
        \emph{Terms} (over $\Sigma$) and their types are defined by the following inductive rules:
        \begin{itemize}
            \item
            Given a variable $x$ of sort $s\in S$, $x$ is a term of type $s$;
            \item
            Given a function symbol $f\in\Sigma$ with arity $s_1\times\dots\times s_n\to s$ and terms $\tau_1,\dots,\tau_n$ where $\tau_i$ is of type $s_i$, 
            then $f(\tau_1,\dots,\tau_n)$ is a term of type $s$.
        \end{itemize}
        \item
        \emph{Horn formulas} (over $\Sigma$) are defined by the following inductive rules:
        \begin{itemize}
            \item
            Given a relation symbol $R\in\Sigma$ with arity $s_1\times\dots\times s_n$ and terms $\tau_1,\dots,\tau_n$ where $\tau_i$ is of type $s_i$, 
            then $R(\tau_1,\dots,\tau_n)$ is a Horn formula;
            \item
            Given two terms $\tau$ and $\tau'$ of the same type $s$, 
            then $\tau=\tau'$ is a Horn formula;
            \item
            The truth constant $\top$ is a Horn formula;
            \item
            Given two Horn formulas $\phi$ and $\psi$, $\phi\wedge\psi$ is a Horn formula.
        \end{itemize}
        \item
        A \emph{context} is a finite tuple $\tup{x}$ of distinct variables.
        \item
        A \emph{term-in-context} (over $\Sigma$) is a pair of a context $\tup{x}$ and term $\tau$ (over $\Sigma$), written as $\tup{x}.\tau$, where all variables appearing in $\tau$ occur in $\tup{x}$.
        \item
        A \emph{Horn formula-in-context} (over $\Sigma$) is a pair of a context $\tup{x}$ and Horn formula $\phi$ (over $\Sigma$), written as $\tup{x}.\phi$, where all variables appearing in $\phi$ belong to $\tup{x}$.
        \item
        A \emph{Horn sequent} (over $\Sigma$) is a pair of two Horn formula $\phi$ and $\psi$ (over $\Sigma$) in the same context $\tup{x}$, written as
        \begin{equation*}
            \phi \seq{\tup{x}} \psi.
        \end{equation*}
        \item
        A \emph{partial Horn theory} $\bT$ (over $\Sigma$) is a set of Horn sequents (over $\Sigma$).
    \end{enumerate}
\end{definition}

Note that we do not consider equal sign $=$ to be a relation symbol, and we informally use the abbreviation $\phi\biseq{\tup{x}}\psi$ for ``$(\phi\seq{\tup{x}}\psi)$ and $(\psi\seq{\tup{x}}\phi)$,'' and $\tau\defined$ for $\tau=\tau$.

\begin{definition}
    Let $\bT$ be a partial Horn theory over an $S$-sorted signature $\Sigma$.
    A \emph{partial $\Sigma$-structure} $M$ consists of:
    \begin{itemize}
        \item
        a set $M_s$ for each sort $s\in S$,
        \item
        a partial map $\intpn{f}{M}$ (or $\intpn{\tup{x}.f(\tup{x})}{M}$) $\colon M_{s_1}\times\dots\times M_{s_n}\pto M_s$ for each function symbol $f\colon s_1\times\dots\times s_n\to s$ in $\Sigma$,
        \item
        a subset $\intpn{R}{M}$ (or $\intpn{\tup{x}.R(\tup{x})}{M}$) $\subseteq M_{s_1}\times\dots\times M_{s_n}$ for each relation symbol $R\colon s_1\times\dots\times s_n$ in $\Sigma$.
    \end{itemize}
\end{definition}

We can easily extend the above definitions of $\intpn{\tup{x}.f(\tup{x})}{M}$ and $\intpn{\tup{x}.R(\tup{x})}{M}$ to arbitrary terms-in-context and Horn formulas-in-context.
\begin{itemize}
    \item
    $\intpn{\tup{y}.f(\tau_1,\dots,\tau_n)}{M}(\tup{m})$ is defined if and only if all $\intpn{\tup{y}.\tau_i}{M}(\tup{m})$ are defined and
    \[
    \intpn{f}{M}(\intpn{\tup{y}.\tau_1}{M}(\tup{m}),\dots,\intpn{\tup{y}.\tau_n}{M}(\tup{m}))
    \]
    is also defined, 
    and then
    \[
    \intpn{\tup{y}.f(\tau_1,\dots,\tau_n)}{M}(\tup{m}):=\intpn{f}{M}(\intpn{\tup{y}.\tau_1}{M}(\tup{m}),\dots,\intpn{\tup{y}.\tau_n}{M}(\tup{m}));
    \]
    \item
    $\tup{m}$ belongs to $\intpn{\tup{y}.R(\tau_1,\dots,\tau_n)}{M}$ if and only if all $\intpn{\tup{y}.\tau_i}{M}(\tup{m})$ are defined and
    \[
    (\intpn{\tup{y}.\tau_1}{M}(\tup{m}),\dots,\intpn{\tup{y}.\tau_n}{M}(\tup{m}))
    \]
    belongs to $\intpn{R}{M}$;
    \item
    $\tup{m}$ belongs to $\intpn{\tup{y}.\tau=\tau'}{M}$ if and only if both $\intpn{\tup{y}.\tau}{M}(\tup{m})$ and $\intpn{\tup{y}.\tau'}{M}(\tup{m})$ are defined and equal to each other;
    \item
    $\intpn{\tup{x}.\phi\wedge\psi}{M}:=\intpn{\tup{x}.\phi}{M}\cap\intpn{\tup{x}.\psi}{M}$;
    \item
    $\intpn{\tup{x}.\top}{M}:=\prod_i M_{s_i}$, where $x_i{:}s_i$.
\end{itemize}

\begin{definition}
    We say that a Horn sequent $\phi\seq{\tup{x}}\psi$ over $\Sigma$ is \emph{valid} in a partial $\Sigma$-structure $M$ if $\intpn{\tup{x}.\phi}{M}\subseteq\intpn{\tup{x}.\psi}{M}$.
    A partial $\Sigma$-structure $M$ is called a \emph{partial $\bT$-model} for a partial Horn theory $\bT$ over $\Sigma$ if all sequents in $\bT$ are valid in $M$.
\end{definition}

\begin{definition}
    Let $\Sigma$ be an $S$-sorted signature.
    A \emph{$\Sigma$-homomorphism} $h\colon M\to N$ between partial $\Sigma$-structures consists of:
    \begin{itemize}
        \item a total map $h_s\colon M_s\to N_s$ for each sort $s\in S$
    \end{itemize}
    such that for each function symbol $f\colon s_1\times\dots\times s_n\to s$ in $\Sigma$ and relation symbol $R\colon s_1\times\dots\times s_n$ in $\Sigma$, there exist (necessarily unique) total maps (denoted by dashed arrows) making the following diagrams commute:
    \[
    \begin{tikzcd}[column sep=large, row sep=large]
        M_{s_1}\times\dots\times M_{s_n}\arrow[d,"h_{s_1}\times\dots\times h_{s_n}"'] &[-10pt] \mathrm{Dom}(\intpn{f}{M})\arrow[d,"\exists"',dashed]\arrow[l,hook']\arrow[r,"\intpn{f}{M}"] &[20pt] M_s\arrow[d,"h_s"] \\
        N_{s_1}\times\dots\times N_{s_n} & \mathrm{Dom}(\intpn{f}{N})\arrow[l,hook']\arrow[r,"\intpn{f}{N}"'] & N_s
    \end{tikzcd}
    \]
    \[
    \begin{tikzcd}[column sep=large, row sep=large]
        M_{s_1}\times\dots\times M_{s_n}\arrow[d,"{h_{s_1}\times\dots\times h_{s_n}}"'] &[-10pt] \intpn{R}{M}\arrow[d,"\exists"',dashed]\arrow[l,hook'] \\
        N_{s_1}\times\dots\times N_{s_n} & {\intpn{R}{N}} \arrow[l,hook']
    \end{tikzcd}
    \]
\end{definition}

\begin{notation}
    Let $\bT$ be a partial Horn theory over an $S$-sorted signature $\Sigma$.
    We will denote by $\Sigma\-\PStr$ the category of partial $\Sigma$-structures and $\Sigma$-homomorphisms and by $\bT\-\PMod$ the full subcategory of $\Sigma\-\PStr$ consisting of all partial $\bT$-models.
\end{notation}

\begin{theorem}
    A category $\A$ is locally finitely presentable if and only if there exists a partial Horn theory $\bT$ such that $\A\simeq\bT\-\PMod$.
\end{theorem}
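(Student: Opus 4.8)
The plan is to prove the two implications separately, leaning on Gabriel--Ulmer duality for locally finitely presentable (lfp) categories together with the fact that partial Horn logic is, in effect, the internal language of finitely complete categories. For the ``if'' direction, I would first establish that the ambient category $\Sigma\-\PStr$ is already lfp: a partial $\Sigma$-structure is a family of carrier sets equipped, for each function symbol, with a subset (its domain of definition) and a map out of that subset, and, for each relation symbol, with a subset of a finite product of carriers. Each of these data is governed by finite-limit (essentially algebraic) conditions, so $\Sigma\-\PStr$ is the model category of an essentially algebraic theory and hence lfp. I would then show that $\bT\-\PMod\subseteq\Sigma\-\PStr$ is a reflective full subcategory closed under products and filtered colimits: each Horn sequent of $\bT$ asserts that one subobject $\intpn{\tup{x}.\phi}{M}$, constructed via finite limits, is contained in another such subobject $\intpn{\tup{x}.\psi}{M}$, and such containment conditions are preserved by limits and by filtered colimits. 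Being complete and accessible, $\bT\-\PMod$ is therefore lfp. Equivalently, one may pass to the syntactic category $\mathcal C_{\bT}$, which carries finite limits, and identify $\bT\-\PMod$ with the category $\mathrm{Lex}(\mathcal C_{\bT},\Set)$ of left exact functors, which is lfp by Gabriel--Ulmer.

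For the ``only if'' direction, suppose $\A$ is lfp. By Gabriel--Ulmer duality, $\A\simeq\mathrm{Lex}(\mathcal C,\Set)$, where $\mathcal C:=\fp{\A}^{\op}$ is a small category with finite limits. The task is then to manufacture a partial Horn theory $\bT$ whose models reproduce the finite-limit-preserving functors $\mathcal C\to\Set$. I would take the internal language of $\mathcal C$: a sort for each object, a function symbol for each morphism, and, to encode the chosen finite limits (a terminal object, binary products, equalizers), partial function symbols whose domains are cut out by equations, together with Horn sequents expressing functoriality and the relevant universal properties. Unwinding the definition, a partial $\bT$-model becomes precisely a left exact functor $\mathcal C\to\Set$, yielding $\bT\-\PMod\simeq\mathrm{Lex}(\mathcal C,\Set)\simeq\A$.

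The main obstacle is the ``only if'' direction: verifying that the internal-language theory $\bT$ is axiomatized correctly, that is, that validity of its Horn sequents in a partial $\Sigma$-structure $M$ corresponds \emph{exactly} to $M$ preserving the finite limits of $\mathcal C$, with no spurious or missing data. The delicate point is that the partial operations must capture precisely the ``operations defined on an equationally described subset'' of an essentially algebraic presentation, and one must confirm that the syntactic category of the resulting $\bT$ is genuinely equivalent to $\mathcal C$, so that the comparison $\bT\-\PMod\simeq\A$ is an equivalence of categories rather than a mere bijection on objects. Once this dictionary between finite limits and Horn sequents is pinned down, both directions reduce to the established correspondence between essentially algebraic theories and lfp categories.
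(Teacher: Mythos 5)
Your proposal is correct and follows essentially the same route as the paper, whose ``proof'' is simply a citation of Palmgren--Vickers (Theorems 9.6, 9.9) and Ad\'amek--Rosick\'y (1.46): those references establish exactly the dictionary you describe, namely that partial Horn theories coincide with essentially algebraic (finite-limit) theories, whose model categories are the locally finitely presentable categories via Gabriel--Ulmer duality. The delicate point you flag---that the internal-language theory of $\fp{\A}^{\op}$ is axiomatized so that partial $\bT$-models correspond precisely to left exact functors---is precisely what those cited results verify.
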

\begin{proof}
    See \cite[Theorem 56, Proposition 59]{palmgren2007partial} and \cite[{}1.46]{adamek1994locally}.
\end{proof}

\begin{example}[posets]\label{eg:pht_for_posets}
    We present the partial Horn theory $\bS_\pos$ for posets.
    Let $S:=\{ *\}$, $\Sigma_\pos:=$\mbox{$\{ \le\colon {*}\times{*} \}$}.
    The partial Horn theory $\bS_\pos$ over $\Sigma_\pos$ consists of:
    \begin{gather*}
        \top\seq{x}x\le x,\quad
        x\le y\wedge y\le x\seq{x,y}x=y,\quad
        x\le y\wedge y\le z\seq{x,y,z}x\le z.
    \end{gather*}
Then, we have $\bS_\pos\-\PMod\cong\Pos$, where $\Pos$ denotes the category of partially ordered sets and monotone maps.
\end{example}

\begin{definition}[\cite{kawase2023birkhoffs}]
    Let $\bT$ be a partial Horn theory over an $S$-sorted signature $\Sigma$.
    A monomorphism $A\hookrightarrow B$ in $\bT\-\PMod$ is called \emph{$\bT$-closed} (or \emph{$\Sigma$-closed}) if the following diagrams form pullback squares for all $f,R\in\Sigma$.
    \begin{equation*}
    \begin{tikzcd}
        \mathrm{Dom}(\intpn{f}{A})\arrow[d,hook']\arrow[r,hook]\arrow[rd,pos=0.1,phantom,"\lrcorner"] &[-10pt] A_{s_1}\times\dots\times A_{s_n}\arrow[d,hook'] \\
        \mathrm{Dom}(\intpn{f}{B})\arrow[r,hook] & B_{s_1}\times\dots\times B_{s_n}
    \end{tikzcd}
    \quad\quad
    \begin{tikzcd}
        \intpn{R}{A}\arrow[d,hook']\arrow[r,hook]\arrow[r,hook]\arrow[rd,pos=0.1,phantom,"\lrcorner"] &[-10pt] A_{s_1}\times\dots\times A_{s_n}\arrow[d,hook'] \\
        {\intpn{R}{B}} \arrow[r,hook] & B_{s_1}\times\dots\times B_{s_n}
    \end{tikzcd}
    \end{equation*}
    Given a $\bT$-closed monomorphism $A\hookrightarrow B$, we call $A$ a $\bT$-closed subobject of $B$.
\end{definition}

\begin{example}
    A monotone map $h\colon X\to Y$ in $\Pos(\cong\bS_\pos\-\PMod)$ is $\bS_\pos$-closed if and only if it is an \emph{embedding}, i.e., 
    $h(x)\le h(y)$ implies $x\le y$.
\end{example}

We will denote by $(S,\Sigma,\bT)$ a partial Horn theory $\bT$ over an $S$-sorted signature $\Sigma$.

\begin{definition}
    Consider two partial Horn theories $(S,\Sigma,\bT)$ and $(S',\Sigma',\bT')$.
    A \emph{theory morphism}
    \[
    \rho\colon (S,\Sigma,\bT)\to(S',\Sigma',\bT')
    \]
    consists of:
    \begin{itemize}
        \item
        a map $S\ni s\mapsto s^\rho\in S'$;
        \item
        an assignment to each function symbol $f\colon s_1\times\dots\times s_n\to s$ in $\Sigma$, a function symbol $f^\rho\colon s_1^\rho\times\dots\times s_n^\rho\to s^\rho$ in $\Sigma'$;
        \item
        an assignment to each relation symbol $R\colon s_1\times\dots\times s_n$ in $\Sigma$, a relation symbol $R^\rho: s_1^\rho\times\dots\times s_n^\rho$ in $\Sigma'$
    \end{itemize}
    such that for every axiom $\phi\seq{\tup{x}}\psi$ in $\bT$, the \emph{$\rho$-translation} $\phi^\rho\seq{\tup{x}^\rho}\psi^\rho$ is a PHL-theorem of $\bT'$.
    Here $\tup{x}^\rho=(x_1^\rho{:}s_1^\rho,\dots,x_n^\rho{:}s_n^\rho)$ with $\tup{x}{:}\tup{s}$.
    The \emph{$\rho$-translation} $\phi^\rho$ and $\psi^\rho$ are constructed by replacing all symbols that $\phi$ and $\psi$ include by $\rho$.
\end{definition}

Our definition of theory morphisms seems strict since it completely depends on syntax.
A looser definition is introduced in \cite[Definition 9.10]{palmgren2007partial}.

A theory morphism $\rho:\bT\to\bT'$ induces the forgetful functor $U^\rho:\bT'\-\PMod\to\bT\-\PMod$.
For $M\in\bT'\-\PMod$, the partial $\bT$-model $U^\rho M$ is given by the following:
\begin{itemize}
    \item
    For each sort $s\in S$, define $(U^\rho M)_s:=M_{s^\rho}$;
    \item
    For each function symbol $f$ in $\Sigma$, define $\intpn{f}{U^\rho M}:=\intpn{\tup{x}^\rho.f^\rho}{M}$;
    \item
    For each relation symbol $R$ in $\Sigma$, define $\intpn{R}{U^\rho M}:=\intpn{\tup{x}^\rho.R^\rho}{M}$.
\end{itemize}

\begin{theorem}
    For every theory morphism $\rho:\bT\to\bT'$, the forgetful functor $U^\rho$ has a left adjoint $F^\rho$.
\end{theorem}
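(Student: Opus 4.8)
The plan is to realise $U^\rho$ as a right adjoint by checking the hypotheses of the adjoint functor theorem for locally presentable categories. By the theorem above both $\bT\-\PMod$ and $\bT'\-\PMod$ are locally finitely presentable, hence complete, cocomplete and well-powered; so it will suffice to show that $U^\rho$ preserves all small limits and is finitary, after which an accessible limit-preserving functor between locally presentable categories is automatically a right adjoint (see \cite{adamek1994locally}). The bulk of the argument is thus an analysis of $U^\rho$ through the underlying-sort functors.

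First I would record the explicit description of $U^\rho$: it sends a partial $\bT'$-model $N$ to the partial $\bT$-model with $(U^\rho N)_s:=N_{s^\rho}$, $\intpn{f}{U^\rho N}:=\intpn{f^\rho}{N}$ and $\intpn{R}{U^\rho N}:=\intpn{R^\rho}{N}$, relabelling homomorphisms accordingly. Writing $V\colon\bT\-\PMod\to\Set^S$ and $V'\colon\bT'\-\PMod\to\Set^{S'}$ for the underlying-sort functors and $\rho^*\colon\Set^{S'}\to\Set^S$ for reindexing along $s\mapsto s^\rho$, this description is precisely the commuting identity $V\circ U^\rho=\rho^*\circ V'$.

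The key step is to check that $V$ and $V'$ create all small limits and all filtered colimits. Products and relations are computed sortwise and componentwise; an equalizer of $f,g\colon M\rightrightarrows N$ is carried by the sortwise equalizer sets $E_s=\{m\in M_s:f_s(m)=g_s(m)\}$ with the partial structure inherited as a $\bT$-closed substructure of $M$, and such a substructure is again a partial $\bT$-model, so $V$ creates equalizers and hence all limits. For filtered colimits one uses that, the signature being finitary and the axioms Horn, a filtered colimit of partial $\bT$-models is formed sortwise and remains a model. Since $\rho^*$ plainly preserves limits and filtered colimits (it merely re-indexes the sorts) and $V$ creates, in particular reflects, them, the identity $V\circ U^\rho=\rho^*\circ V'$ forces $U^\rho$ to preserve all small limits and all filtered colimits.

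With $U^\rho$ shown continuous and finitary, the adjoint functor theorem then yields the desired left adjoint $F^\rho\dashv U^\rho$. I expect the main obstacle to be the limit-preservation step, and specifically the treatment of equalizers: unlike products and filtered colimits, these require verifying that the inherited partial structure on the equalizer set is genuinely $\bT$-closed, and hence still a model, before compatibility with $\rho^*$ becomes routine. As an alternative I could build $F^\rho$ syntactically, as the partial $\bT'$-model freely generated by the underlying data of a $\bT$-model $M$ modulo the $\rho$-translations of its operations and relations, but verifying the universal property reduces to essentially the same bookkeeping.
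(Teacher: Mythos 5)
Your overall strategy---prove that $U^\rho$ is continuous and finitary between locally finitely presentable categories and then invoke the adjoint functor theorem for locally presentable categories---is viable, and it genuinely differs from the paper, whose ``proof'' is a citation of the syntactic free-model construction in \cite{palmgren2007partial}. However, the pivotal step of your argument is false as stated: the underlying-sorts functor $V\colon\bT\-\PMod\to\Set^S$ does \emph{not} create, nor even reflect, limits or filtered colimits. This is a phenomenon specific to \emph{partial} structures: a $\Sigma$-homomorphism is only required to \emph{preserve} definedness of operations and membership in relations, never to reflect them, so a given $S$-sorted family of sets admits many partial structures compatible with a given cone. Concretely, take one sort, one relation symbol $R$, $\bT=\varnothing$, and the empty diagram: a singleton set with $\intpn{R}{M}=\varnothing$ and a singleton set with $\intpn{R}{M}$ full are both cones over the empty diagram lifting the terminal object of $\Set$, but only the second is terminal in $\bT\-\PMod$ (there is no homomorphism from the full one to the empty one). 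Hence the uniqueness clause in ``creates'' fails, reflection of limits fails, and the inference ``$V\circ U^\rho=\rho^*\circ V'$ preserves limits and $V$ reflects them, therefore $U^\rho$ preserves limits'' is unjustified. The same defect occurs for filtered colimits: a cocone on the colimiting sets may define operations at more points than the colimit structure does, so $V$ does not reflect those either.

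The gap is repairable, but you must compare partial structures directly instead of arguing through $\Set^S$. In both $\bT\-\PMod$ and $\bT'\-\PMod$, limits are computed sortwise with the canonical structure: an operation is defined at a tuple of a product iff it is defined at every component, and at a tuple of an equalizer iff it is defined in the ambient model (your $\bT$-closed substructure, which is indeed again a model); dually, a filtered colimit carries the structure ``defined iff defined at some stage.'' Since $U^\rho$ merely relabels sorts and symbols along $\rho$, i.e.\ $\intpn{f}{U^\rho N}=\intpn{f^\rho}{N}$ and $\intpn{R}{U^\rho N}=\intpn{R^\rho}{N}$, these canonical structures match on the nose, so the comparison morphisms $U^\rho(\Lim{I}N_I)\to\Lim{I}U^\rho(N_I)$ and $\Colim{I}U^\rho(N_I)\to U^\rho(\Colim{I}N_I)$ are isomorphisms of partial structures and not merely bijections of underlying sets. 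With that verification substituted for the creation/reflection claim, the remainder of your argument (accessible plus limit-preserving implies right adjoint) is correct; your alternative suggestion of building $F^\rho$ syntactically is essentially what the reference cited by the paper does.
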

\begin{proof}
    See \cite[Theorem 5.4]{palmgren2007partial}.
\end{proof}

\subsection{Relative algebraic theories}
We now recall the relative algebraic theories \cite{kawase2023birkhoffs}, which are a syntactic description of finitary monads on locally finitely presentable categories.
Throughout this subsection, we fix a partial Horn theory $\bS$ over an $S$-sorted signature $\Sigma$.

\begin{definition}\quad
\begin{enumerate}
    \item
    An \emph{$\bS$-relative signature} $\Omega$ is a set $\Omega$ such that for each element $\omega\in\Omega$, a Horn formula-in-context $\tup{x}.\phi$ over $\Sigma$ and a sort $s\in S$ are given.
    The Horn formula-in-context $\tup{x}.\phi$ is called an \emph{arity} of $\omega$ and written as $\ar(\omega)$.
    The sort $s$ is called a \emph{type} of $\omega$ and written as $\type(\omega)$.
    \item
    Given an $\bS$-relative signature $\Omega$, 
    each $\omega\in\Omega$ can be regarded as a function symbol $\omega\colon s_1\times\dots\times s_n\to s$ if $\type(\omega)=s$ and $\ar(\omega)$ is in the context $x_1{:}s_1,\dots,x_n{:}s_n$.
    Denote by $\Sigma+\Omega$ the $S$-sorted signature obtained by adding to $\Sigma$ all $\omega\in\Omega$ in this way.
    A Horn sequent $\phi\seq{\tup{x}}\psi$ over $\Sigma+\Omega$ is called an \emph{$\bS$-relative judgment} if $\phi$ contains no function symbol derived from $\Omega$.
    \item
    An \emph{$\bS$-relative algebraic theory} is a pair $(\Omega,E)$ of an $\bS$-relative signature $\Omega$ and a set $E$ of $\bS$-relative judgments.
\end{enumerate}
\end{definition}

\begin{definition}
    Let $\Omega$ be an $\bS$-relative signature.
    An ($\bS$-relative) \emph{$\Omega$-algebra} $\bA$ consists of:
    \begin{itemize}
        \item a partial $\bS$-model $A$,
        \item for each $\omega\in\Omega$, a map $\intpn{\omega}{\bA}\colon \intpn{\ar(\omega)}{A}\to A_{\type(\omega)}$.
    \end{itemize}
\end{definition}

An $\Omega$-algebra $\bA$ can be regarded as a partial $(\Sigma+\Omega)$-structure by considering $\intpn{\omega}{\bA}$ as a partial map $A_{s_1}\times\dots\times A_{s_n}\pto A_{\type(\omega)}$, where $\ar(\omega)$ is in the context $x_1{:}s_1,\dots,x_n{:}s_n$.
Conversely, a partial $(\Sigma+\Omega)$-structure satisfying all sequents in $\bS$ and the bisequent $\omega(\tup{x})\defined\biseq{\tup{x}}\ar(\omega)$ for each $\omega\in\Omega$ can be regarded as an $\Omega$-algebra.

\begin{definition}
    Let $\Omega$ be an $\bS$-relative signature.
    We say an $\Omega$-algebra $\bA$ satisfies an $\bS$-relative judgment if $\bA$ satisfies it as a partial $(\Sigma+\Omega)$-structure.
\end{definition}

\begin{notation}\quad
\begin{enumerate}
    \item
    Given an $\bS$-relative signature $\Omega$, we will denote by $\Alg\Omega$ the category of $\Omega$-algebras and $(\Sigma+\Omega)$-homomorphisms.
    \item
    Given an $\bS$-relative algebraic theory $(\Omega,E)$, we will denote by $\Alg(\Omega,E)$ the full subcategory of $\Alg\Omega$ consisting of all algebras satisfying all $\bS$-relative judgments in $E$.
    An $\Omega$-algebra belonging to $\Alg(\Omega,E)$ is called an \emph{$(\Omega,E)$-algebra}.
\end{enumerate}
\end{notation}

\begin{definition}\label{def:pht_for_rat}
    Let $(\Omega,E)$ be an $\bS$-relative algebraic theory.
    We define the partial Horn theory $\pht{\Omega}{E}$ over $\Sigma+\Omega$ associated with $(\Omega,E)$ as follows:
    \begin{equation*}
        \pht{\Omega}{E}:=\bS\cup\{\omega(\tup{x})\defined\biseq{\tup{x}}\ar(\omega)\}_{\omega\in\Omega}\cup E
    \end{equation*}
    Then, we have $\Alg(\Omega,E)\cong\pht{\Omega}{E}\-\PMod$.
    Thus, the category $\Alg(\Omega,E)$ of relative algebras is the category of models of a partial Horn theory.
    In particular, $\Alg(\Omega,E)$ is locally finitely presentable.
\end{definition}

\begin{example}
    Let $\bS_\pos$ be the partial Horn theory for posets as in \cref{eg:pht_for_posets}.
    We now present an example of an $\bS_\pos$-relative algebraic theory.
    A \emph{uniquely difference-ordered semiring} \cite{golan2003semiring} is a tuple $(R,+,\cdot,0,1,\ominus)$ consisting of:
    \begin{itemize}
        \item a poset $R$,
        \item total binary operators $+,\cdot\colon R\times R\to R$,
        \item constants $0,1\in R$,
        \item a partial binary operator $\ominus\colon R\times R\pto R$
    \end{itemize}
    which satisfies the following conditions:
    \begin{itemize}
        \item $(R,+,0)$ is a commutative monoid;
        \item $(R,\cdot,1)$ is a monoid;
        \item $a(b+c)=ab+ac$ and $(a+b)c=ac+bc$ for all $a,b,c\in R$;
        \item $0a=0=a0$ for all $a\in R$;
        \item $b\ominus a$ is defined if and only if $a\le b$;
        \item $a+(b\ominus a)=b$ whenever $a\le b$;
        \item $a\le a+b$ for all $a,b\in R$;
        \item $(a+b)\ominus a=b$ for all $a,b\in R$.
    \end{itemize}
    This definition matches that in \cite{golan2003semiring}.
    The uniquely difference-ordered semirings can be redefined as an $\bS_\pos$-relative algebraic theory.
\end{example}

\section{Local retracts in locally finitely presentable categories}\label{section3}
We recall the notion of \emph{local retractions} in locally finitely presentable categories.
This plays an important role later in \cref{section5}.

\begin{definition}
    A morphism $p\colon X\to Y$ in a category $\A$ is called a \emph{local retraction} if for every finitely presentable object $\Gamma\in\A$ and every morphism $f\colon\Gamma\to Y$, there exists a morphism $g\colon\Gamma\to X$ such that $p\circ g=f$.
\begin{equation*}
\begin{tikzcd}
    & X\arrow[d,"p"]\\
    \Gamma\arrow[ur,"\exists g",dashed]\arrow[r,"f"'] & Y
\end{tikzcd} 
\end{equation*}
Given a local retraction $p\colon X\to Y$, we say that $Y$ is a \emph{local retract} of $X$.
\end{definition}

\begin{remark}
    A local retraction is also called an ($\aleph_0$-)\emph{pure quotient} in \cite{adamek2004purequotient}.
\end{remark}

\begin{example}
    In $\Set$, a local retraction is simply a surjection.
\end{example}

\begin{remark}
    In a locally finitely presentable category, by the small object argument, the class of all local retractions is the right class of a weak factorization system.
    The left class of such a weak factorization system consists of all retracts, in the arrow category, of some coprojection $A\to A+\coprod_{i\in I}B_i$ where all $B_i$ are finitely presentable.
\end{remark}

Here are some elementary properties of local retractions.\vspace{2em}
\begin{lemma}\label{lem:basic_property_locret}
    For every category $\A$, the following hold.
    \begin{enumerate}
        \item
        A composition $p\circ h$ is a local retraction whenever both $p$ and $h$ are local retractions.
        \item
        If a composition $p\circ h$ is a local retraction, then so is $p$.
        \item\label{lem:basic_property_locret-2}
        Every retraction is a local retraction.
        \item\label{lem:basic_property_locret-3}
        If $p\colon X\to Y$ is a local retraction and $Y$ is finitely presentable, then $p$ is a retraction.
        \item\label{lem:basic_property_locret-4}
        Local retractions are stable under pullback, i.e., if a pullback square
        \begin{equation*}
            \begin{tikzcd}
                \cdot\arrow[r]\arrow[d,"p'"']\arrow[rd,pos=0.1,phantom,"\lrcorner"] &[-8pt] \cdot\arrow[d,"p"]\\[-5pt]
                \cdot\arrow[r] & \cdot
            \end{tikzcd}
        \end{equation*}
        is given and $p$ is a local retraction, then $p'$ is also a local retraction.
        \item\label{lem:basic_property_locret-5}
        Let $p=\Colim{I\in\bI}p_I$ be a filtered colimit in the arrow category $\A^\to$ such that $p_I$ is a local retraction in $\A$ for all $I\in\bI$.
        Then $p$ is a local retraction in $\A$.
    \end{enumerate}
\end{lemma}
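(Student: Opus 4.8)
The plan is to prove each of the six parts of \cref{lem:basic_property_locret} directly from the definition of local retraction, treating each as an essentially diagrammatic exercise. In every case the key move is to take an arbitrary finitely presentable object $\Gamma$ together with a morphism into the codomain, and to produce the required lift into the domain. I expect the statements (i)--(iii) to be immediate and (iv)--(v) to require the characteristic universal properties of pullbacks and of filtered colimits (specifically, that $\Gamma$ is finitely presentable), with part (v) being the main obstacle.

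\begin{proof}
Throughout, let $\Gamma$ denote an arbitrary finitely presentable object of $\A$.

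\emph{(i)} Suppose $p\colon Y\to Z$ and $h\colon X\to Y$ are local retractions, and let $f\colon\Gamma\to Z$. Since $p$ is a local retraction, there is $g\colon\Gamma\to Y$ with $p\circ g=f$. Since $h$ is a local retraction, applied to $g$, there is $k\colon\Gamma\to X$ with $h\circ k=g$. Then $(p\circ h)\circ k=p\circ g=f$, so $p\circ h$ is a local retraction.

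\emph{(ii)} Suppose $p\circ h$ is a local retraction, where $h\colon X\to Y$ and $p\colon Y\to Z$. Given $f\colon\Gamma\to Z$, there is $k\colon\Gamma\to X$ with $(p\circ h)\circ k=f$. Setting $g:=h\circ k\colon\Gamma\to Y$ gives $p\circ g=f$, so $p$ is a local retraction.

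\emph{(iii)} If $p\colon X\to Y$ is a retraction, fix a section $s\colon Y\to X$ with $p\circ s=\id_Y$. For any $f\colon\Gamma\to Y$, take $g:=s\circ f$; then $p\circ g=f$. Hence $p$ is a local retraction.

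\emph{(iv)} Given a pullback square with top-right corner $p\colon X\to Y$ a local retraction, left-hand map $p'\colon X'\to Y'$, bottom map $q\colon Y'\to Y$ and top map $q'\colon X'\to X$, let $f\colon\Gamma\to Y'$ be arbitrary. Since $p$ is a local retraction, the composite $q\circ f\colon\Gamma\to Y$ lifts along $p$ to some $g\colon\Gamma\to X$ with $p\circ g=q\circ f$. The cone $(g,f)$ into the pullback induces a unique $u\colon\Gamma\to X'$ with $q'\circ u=g$ and $p'\circ u=f$. The second equation exhibits $f$ as factoring through $p'$, so $p'$ is a local retraction.

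\emph{(v)} Write the filtered colimit $p=\Colim{I\in\bI}p_I$ in $\A^\to$ as $p\colon X\to Y$, with colimit cocones $(\alpha_I\colon X_I\to X)_{I}$ and $(\beta_I\colon Y_I\to Y)_{I}$ satisfying $p\circ\alpha_I=\beta_I\circ p_I$; here $X=\Colim{I}X_I$ and $Y=\Colim{I}Y_I$ are filtered colimits in $\A$. Let $f\colon\Gamma\to Y$. Because $\Gamma$ is finitely presentable and $Y$ is a filtered colimit, $f$ factors through some colimit leg: there exist $I\in\bI$ and $f_I\colon\Gamma\to Y_I$ with $\beta_I\circ f_I=f$. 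Since $p_I$ is a local retraction, there is $g_I\colon\Gamma\to X_I$ with $p_I\circ g_I=f_I$. Putting $g:=\alpha_I\circ g_I\colon\Gamma\to X$, we compute $p\circ g=p\circ\alpha_I\circ g_I=\beta_I\circ p_I\circ g_I=\beta_I\circ f_I=f$. Thus every $f\colon\Gamma\to Y$ lifts along $p$, so $p$ is a local retraction.
\end{proof}
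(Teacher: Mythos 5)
Your proofs of the five parts you address are all correct, and they proceed exactly as the paper intends (the paper simply declares the whole lemma ``immediate'', so direct verification from the definition is the right approach). However, there is a genuine gap of omission: the lemma has \emph{six} parts, and despite announcing in your plan that you will prove all six, your proof covers only five. The missing one is part \ref{lem:basic_property_locret-3} of the statement: \emph{if $p\colon X\to Y$ is a local retraction and $Y$ is finitely presentable, then $p$ is a retraction}. Your items (iv) and (v) are the lemma's pullback-stability and filtered-colimit parts, so nothing in your write-up addresses this statement. It is not a throwaway item: the paper uses it crucially in the proof of \cref{prop:locret_filtered_colim_of_retracts} to conclude that the pullbacks $p_I$ over finitely presentable $Y_I$ are retractions. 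The fix is one line: since $Y$ is itself finitely presentable, apply the definition of local retraction with $\Gamma:=Y$ and $f:=\id_Y$ to obtain $g\colon Y\to X$ with $p\circ g=\id_Y$; this $g$ is a section of $p$, so $p$ is a retraction.

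One minor remark on your item (v): in an arbitrary category $\A$ (the lemma assumes no cocompleteness), a colimit in $\A^\to$ need not be computed pointwise, so your parenthetical claim that $X=\Colim{I}X_I$ requires justification. Fortunately your argument never uses it; it only needs $Y=\Colim{I}Y_I$, which does hold in full generality because the codomain functor $\cod\colon\A^\to\to\A$ has a right adjoint (namely $Z\mapsto\id_Z$) and hence preserves all colimits that exist. You should either drop the claim about $X$ or justify the claim about $Y$ this way; the rest of the argument then stands as written.
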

\begin{proof}
    The proof is immediate.
\end{proof}

\begin{proposition}[{\cite[Proposition 3]{adamek2004purequotient}}]\label{prop:locret_filtered_colim_of_retracts}
    Let $\A$ be a locally finitely presentable category.
    Then for a morphism $p\colon X\to Y$ in $\A$, the following are equivalent:
    \begin{enumerate}
        \item\label{prop:locret_filtered_colim_of_retracts-1}
        $p$ is a local retraction.
        \item\label{prop:locret_filtered_colim_of_retracts-2}
        $p$ is a filtered colimit of retractions, i.e., $p$ can be written as a filtered colimit $p=\Colim{I\in\bI}p_I$ in the arrow category $\A^\to$ such that every $p_I$ is a retract in $\A$.
    \end{enumerate}
\end{proposition}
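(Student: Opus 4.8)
The plan is to treat the two implications very asymmetrically, since one of them is essentially free. The direction \ref{prop:locret_filtered_colim_of_retracts-2}~$\Rightarrow$~\ref{prop:locret_filtered_colim_of_retracts-1} costs nothing: every retraction is a local retraction (\cref{lem:basic_property_locret}\,\ref{lem:basic_property_locret-2}), and a filtered colimit in $\A^\to$ of local retractions is again a local retraction (\cref{lem:basic_property_locret}\,\ref{lem:basic_property_locret-5}), so a morphism presented as in \ref{prop:locret_filtered_colim_of_retracts-2} is automatically a local retraction. All the real work goes into \ref{prop:locret_filtered_colim_of_retracts-1}~$\Rightarrow$~\ref{prop:locret_filtered_colim_of_retracts-2}, and the strategy there is to start from the canonical presentation of $p$ as a filtered colimit in $\A^\to$ and then replace the indexing diagram by a cofinal subdiagram consisting of retractions.

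For that direction I would first use that, since $\A$ is locally finitely presentable, so is the arrow category $\A^\to$, and its finitely presentable objects are exactly the morphisms $A\to B$ between finitely presentable objects of $\A$. Hence $p$ is the colimit of its canonical diagram: writing $\D$ for the comma category of arrows between finitely presentable objects equipped with a map to $p$ in $\A^\to$, the category $\D$ is filtered and $p=\Colim{\D}g$, where an object of $\D$ is a commutative square consisting of $g\colon A\to B$ together with $a\colon A\to X$ and $b\colon B\to Y$ satisfying $p\circ a=b\circ g$.

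The key construction replaces each such $g$ by a genuine retraction while preserving the map to $p$, and this is where the hypothesis is used. Given an object of $\D$ as above, the object $B$ is finitely presentable, so the local retraction $p$ lifts $b$: there is $\tilde b\colon B\to X$ with $p\circ\tilde b=b$. I then form the split epimorphism $g'\colon A+B\to B$ determined by $g'\circ\iota_A=g$ and $g'\circ\iota_B=\id_B$ (split by $\iota_B$), together with the cocone $[a,\tilde b]\colon A+B\to X$ and $b\colon B\to Y$; the identity $p\circ[a,\tilde b]=[b\circ g,\,b]=b\circ g'$ shows that this is again an object of $\D$, now with underlying arrow a retraction, and $(\iota_A,\id_B)$ is a morphism in $\D$ from the original object to it. Thus every object of $\D$ admits a morphism into the full subcategory $\D'\subseteq\D$ spanned by those squares whose underlying arrow is a split epimorphism.

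It then remains to check that $\D'$ is final in $\D$, for in that case $p=\Colim{\D'}g$ exhibits $p$ as a filtered colimit of retractions, as required. Here I would invoke the general principle that a full subcategory of a filtered category into which every object admits a morphism is itself filtered, with final inclusion: nonemptiness of the relevant comma categories is exactly the construction above, and their connectedness follows by coequalizing parallel maps inside the filtered $\D$. I expect this finality bookkeeping to be the main obstacle. The lifting step is a one-line use of the finite presentability of $B$, but upgrading ``every object maps into $\D'$'' (a merely weakly initial condition) to genuine cofinality of a filtered subdiagram requires care with the filtered-colimit combinatorics. A secondary point to pin down cleanly is the identification of the finitely presentable objects of $\A^\to$, on which the canonical presentation of $p$ rests.
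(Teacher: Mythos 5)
Your proposal is correct, but it takes a genuinely different route from the paper's proof. The paper argues entirely inside $\A$: it writes $Y=\Colim{I\in\bI}Y_I$ as a filtered colimit of finitely presentable objects, pulls $p$ back along each coprojection $Y_I\to Y$, notes that each pullback $p_I\colon X_I\to Y_I$ is a local retraction (stability under pullback, \cref{lem:basic_property_locret}\ref{lem:basic_property_locret-4}) with finitely presentable codomain and hence a retraction (\cref{lem:basic_property_locret}\ref{lem:basic_property_locret-3}), and concludes $p=\Colim{I\in\bI}p_I$ because filtered colimits are stable under pullback in a locally finitely presentable category. You instead work in $\A^\to$ with the canonical filtered diagram of arrows between finitely presentable objects, repair each square $(g,a,b)$ into a split-epi square via the lift $\tilde b$ and the coproduct $A+B$, and then invoke finality of the full subcategory $\D'$ of split-epi squares. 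The two facts you flag as needing care are both standard and true: $\A^\to\cong\A^{\2}$ is locally finitely presentable with finitely presentable objects exactly the arrows between finitely presentable objects of $\A$, and a full subcategory of a filtered category into which every object admits a morphism is itself filtered and final (connectedness of each comma category $(d\downarrow\D')$ goes exactly as you sketch: cocone the two targets in $\D$, coequalize the two composites, then map the result back into $\D'$ and use fullness). Comparing the two: the paper's route is shorter and reuses \cref{lem:basic_property_locret}, at the price of invoking commutation of filtered colimits with pullbacks; your route avoids pullbacks altogether and actually yields a slightly stronger conclusion, namely that $p$ is a filtered colimit of retractions whose domains \emph{and} codomains are finitely presentable, whereas in the paper's construction only the codomains $Y_I$ are finitely presentable.
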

\begin{proof}
    {[\ref{prop:locret_filtered_colim_of_retracts-2}$\implies$\ref{prop:locret_filtered_colim_of_retracts-1}]}
    This follows from \cref{lem:basic_property_locret}\ref{lem:basic_property_locret-2}\ref{lem:basic_property_locret-5}.
    
    {[\ref{prop:locret_filtered_colim_of_retracts-1}$\implies$\ref{prop:locret_filtered_colim_of_retracts-2}]}
    $Y$ can be written as a filtered colimit $Y=\Colim{I\in\bI}Y_I$ of finitely presentable objects.
    For each $I\in\bI$, take the pullback of $p$ along the $I$-th coprojection
    \begin{equation*}
        \begin{tikzcd}
            X_I\arrow[r]\arrow[d,"p_I"']\arrow[rd,pos=0.1,phantom,"\lrcorner"] & X\arrow[d,"p"]\\
            Y_I\arrow[r] & Y
        \end{tikzcd}\incat{\A}.
    \end{equation*}
    By \cref{lem:basic_property_locret}\ref{lem:basic_property_locret-3}, $p_I$ is a retraction.
    Since filtered colimits are stable under pullback in $\A$, we have $p=\Colim{I\in\bI}p_I$.
\end{proof}

\begin{proposition}[{\cite[Proposition 4]{adamek2004purequotient}}]\label{prop:locret_implies_reg.epi}
    In a locally finitely presentable category, every local retraction is a regular epimorphism.
\end{proposition}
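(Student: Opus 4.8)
The plan is to reduce the statement to the stability of regular epimorphisms under filtered colimits, exploiting the characterization in \cref{prop:locret_filtered_colim_of_retracts}. First I would invoke \cref{prop:locret_filtered_colim_of_retracts} to write the given local retraction $p$ as a filtered colimit $p=\Colim{I\in\bI}p_I$ in $\A^\to$, where each $p_I\colon X_I\to Y_I$ is a retraction. A retraction is a split epimorphism, and every split epimorphism is a regular epimorphism: if $s$ is a section of $p_I$, then $p_I$ is the coequalizer of the parallel pair $\id_{X_I},\,s\circ p_I\colon X_I\to X_I$. Thus it suffices to prove that a filtered colimit of regular epimorphisms in $\A^\to$ is again a regular epimorphism.

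To this end I would exhibit $p$ as the coequalizer of its own kernel pair. Since $\A$ is complete and each $p_I$ is a regular epimorphism, $p_I$ is the coequalizer of its kernel pair $K_I\rightrightarrows X_I$, where $K_I:=X_I\times_{Y_I}X_I$. The two crucial commutation properties are: (a) in a locally finitely presentable category filtered colimits commute with finite limits, so that the kernel pair $K:=X\times_Y X$ of $p$, together with its two projections, is computed as the filtered colimit $\Colim{I\in\bI}K_I\rightrightarrows\Colim{I\in\bI}X_I$; and (b) colimits commute with colimits, so the coequalizer of $K\rightrightarrows X$ is the filtered colimit of the coequalizers $\mathrm{coeq}(K_I\rightrightarrows X_I)=Y_I$. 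Combining (a) and (b) yields
\[
\mathrm{coeq}(K\rightrightarrows X)\cong\Colim{I\in\bI}Y_I=Y,
\]
and one checks that the coequalizing map is exactly $p$. Hence $p$ is the coequalizer of its kernel pair, i.e., a regular epimorphism.

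The main point requiring care is property (a), namely that filtered colimits commute with finite limits in any locally finitely presentable category. I would justify this by recalling that $\A$ is (equivalent to) a full reflective subcategory of a presheaf category $[\C,\Set]$ closed under filtered colimits; finite limits and filtered colimits in $\A$ are then computed as in $[\C,\Set]$, where they are pointwise and reduce to $\Set$, in which they commute. A secondary subtlety is bookkeeping: one must ensure that the filtered colimit of the kernel pairs $K_I\rightrightarrows X_I$ produces the kernel pair $K\rightrightarrows X$ \emph{together with its two projection maps}, rather than merely an abstractly isomorphic parallel pair; this follows from the naturality in $I$ of the projections and the pointwise computation of filtered colimits in $\A^\to$.
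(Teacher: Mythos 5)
Your proof is correct and takes essentially the same route as the paper's: the paper also realizes $p$ as a filtered colimit of retractions obtained by pulling back along coprojections $Y_I\to Y$ from finitely presentable objects (the very construction proving \cref{prop:locret_filtered_colim_of_retracts}, inlined rather than cited), and then concludes by commuting filtered colimits with kernel pairs and coequalizers. The only cosmetic difference is that in the paper the kernel pairs of the $p_I$ are literally base changes of the kernel pair of $p$, so it invokes pullback-stability of filtered colimits where you invoke the general commutation of finite limits with filtered colimits in a locally finitely presentable category; both justifications are standard and valid.
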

\begin{proof}
    Let $p\colon X\to Y$ be a local retraction in a locally finitely presentable category.
    Take the kernel pair of $p$:
    \begin{equation}\label{eq:kernel_pair_of_p}
        \begin{tikzcd}
            E\arrow[r,shift left,"\pi"]\arrow[r,shift right,"\pi'"'] & X\arrow[r,"p"] & Y.
        \end{tikzcd}
    \end{equation}
    $Y$ can be written as a filtered colimit $Y=\Colim{I\in\bI}Y_I$ of finitely presentable objects.
    Taking the base change of \cref{eq:kernel_pair_of_p} along each coprojection $Y_I\to Y$, we have the following:
    \begin{equation}\label{eq:kernel_pair_of_pI}
        \begin{tikzcd}
            E_I\arrow[r,shift left,"\pi_I"]\arrow[r,shift right,"\pi'_I"'] & X_I\arrow[r,"p_I"] & Y_I.
        \end{tikzcd}
    \end{equation}
    The diagram \cref{eq:kernel_pair_of_pI} is now the kernel pair of $p_I$.
    Since $p_I$ is a retraction, particularly a regular epimorphism, \cref{eq:kernel_pair_of_pI} forms a coequalizer.
    Since the base changes along $p$ and $p\pi(=p\pi')$ preserve the filtered colimit $Y=\Colim{I\in\bI}Y_I$, we have $X=\Colim{I\in\bI}X_I$ and $E=\Colim{I\in\bI}E_I$.
    By commutativity of filtered colimits with coequalizers, \cref{eq:kernel_pair_of_p} forms a coequalizer, hence $p$ is a regular epimorphism.
\end{proof}

\begin{remark}
    In a category that is not locally finitely presentable, \cref{prop:locret_implies_reg.epi} can fail, i.e., a local retraction is not necessarily a regular epimorphism.
    Indeed, in the category $\Top$ of topological spaces and continuous maps, a finitely presentable object is simply a finite discrete space (see \cite[1.2 Examples (10)]{adamek1994locally}\footnote{There is an error in the argument of \cite{adamek1994locally}. A proof that seems to be correct can be found in the answer to a question in mathoverflow: \url{https://mathoverflow.net/q/426127}.}), hence a local retraction in $\Top$ is simply a surjective continuous map.
    However, there is a surjective continuous map which is not a regular epimorphism in $\Top$.
\end{remark}

\begin{example}\label{eg:locret_pos}
    Let $\Pos$ be the category of partially ordered sets and monotone maps.
    Considering the minimal limit ordinal $\omega=\Colim{n<\omega}n\in\Pos$ as a colimit of finite ordinals, we have a canonical morphism $p\colon\coprod_{n<\omega}n\to\omega$ from the coproduct.
    This $p$ is a local retraction in $\Pos$.
    Indeed, since $\omega=\Colim{n<\omega}n$ is a filtered colimit, every morphism $f\colon X\to\omega$ from finitely presentable object $X$ factors through some finite ordinal $n$, i.e., $f$ can be written as $f\colon X\arr{f_n}n\to\omega$ for some $f_n$.
    Then $X\arr{f_n}n\to\coprod_{n<\omega}n$ gives a lift of $f$ along $p$.
\end{example}

\begin{example}\label{eg:locret_end}
    Let $\End$ be the category of sets with an endomorphism, i.e., an object in $\End$ is a pair $(A,a)$ of a set $A$ and an endomorphism $a\colon A\to A$ and a morphism from $(A,a)$ to $(B,b)$ is a map $f\colon A\to B$ such that $b\circ f=f\circ a$.
    Considering $\bN$ and $\bZ$ as objects in $\End$ by the successors $x\mapsto x+1$, the projection $\bZ\times\bN\to\bZ$ is a local retraction in $\End$.
\end{example}

\begin{remark}\quad
\begin{enumerate}
    \item
    \cref{lem:basic_property_locret}\ref{lem:basic_property_locret-2} says: retraction$\implies$local retraction.
    However, the converse does not hold.
    Indeed, the local retractions introduced in \cref{eg:locret_pos,eg:locret_end} are not retractions.
    \item
    \cref{prop:locret_implies_reg.epi} says: in a locally finitely presentable category, local retraction$\implies$regular epimorphism.
    However, the converse does not hold either.
    To show this, consider the following coequalizer in $\Cat$:
    \begin{equation*}
        \begin{tikzcd}
            \1\arrow[r,shift left,"\const{0}"]\arrow[r,shift right,"\const{1}"'] & \2\arrow[r,"q"] & \Sigma\bN.
        \end{tikzcd}
    \end{equation*}
    Here, $\1$ is the terminal, $\2$ is the category with two objects $0,1$ and a unique morphism between them, $\Sigma\bN$ is the category obtained by regarding the additive monoid $\bN$ as a single-object category, and $q$ is the functor choosing an endomorphism $1$ in $\Sigma\bN$.
    Since $q$ is not surjective on morphisms, $\Cat(\2,\2)\arr{q\circ-}\Cat(\2,\Sigma\bN)$ is not surjective, hence $q$ is not a local retraction.
\end{enumerate}
\end{remark}

\section{The ascending chain condition for categories}\label{section4}
We study \emph{strongly connected components} in categories and introduce a noetherian-like condition for a category.
Later, we will show that such a condition is sufficient and nearly necessary to eliminate the closure property under filtered colimits from Birkhoff's theorem.

\begin{definition}\quad
    \begin{enumerate}
        \item
        Objects $X$ and $Y$ in a category are \emph{strongly connected} if there exist morphisms $X\to Y$ and $Y\to X$.
        \item
        Strong connectedness is an equivalence relation on the class of all objects.
        An equivalence class under strong connectedness is called a \emph{strongly connected component}.
    \end{enumerate}
\end{definition}

\begin{remark}
    Every (not necessarily small) poset can be considered as a category by regarding each order as a morphism, which yields a functor $\iota\colon\POS\to\CAT$.
    Here, $\POS$ is the category of large posets, and $\CAT$ is the category of large categories.
    The functor $\iota$ has a left adjoint $\sigma\colon\CAT\to\POS$, which is called the \emph{posetification}.
    The underlying class of the large poset $\sigma(\C)$ is the class of all strongly connected components in $\C$.
    There is an order $[X]\le [Y]$ in $\sigma(\C)$ if and only if there is a morphism $X\to Y$ in $\C$.
\end{remark}

\begin{remark}
    The posetification $\sigma(\C)$ of a category $\C$ can be constructed in a more explicit way:
    A \emph{lower class} in $\C$ is a subclass $L\subseteq\ob\C$ such that if $Y\in L$ and there is a morphism $X\to Y$ in $\C$, then $X\in L$.
    A lower class $L$ in $\C$ is \emph{principal} if $L=\lowset{C}$ for some object $C\in\C$, where 
    \[
    \lowset{C}:=\{ X\in\C \mid \text{there is a morphism } X\to C \text{ in }\C \}.
    \]
    Let $\lowlat{\C}$ denote the large poset of all lower classes in $\C$, whose order is given by inclusion.
    Let $\plowlat{\C}\subseteq\lowlat{\C}$ denote the large subposet consisting of all principal lower classes.
    Then, we get an isomorphism $\sigma(\C)\cong\plowlat{\C}$.
\end{remark}

\begin{remark}\label{rem:posetification_preserves_products}
    The posetification $\sigma\colon\CAT\to\POS$ preserves not only colimits but also products.
    Indeed, whether two objects in a product category are strongly connected is completely determined component-wise.
\end{remark}

\begin{proposition}\label{prop:full-esssurj-srgconn}
    Let $F\colon \A\to\B$ be a functor.
    \begin{enumerate}
        \item\label{prop:full-esssurj-srgconn-1}
        If $F$ is full, the induced map $\sigma(F)\colon \sigma(\A)\to\sigma(\B)$ in $\POS$ is an embedding, i.e.,
        \[
        \sigma(F)([X])\le\sigma(F)([Y]) \incat{\sigma(\B)} \quad\text{ if and only if }\quad [X]\le [Y] \incat{\sigma(\A)},
        \]
        and in particular, $\sigma(F)$ is injective.
        \item\label{prop:full-esssurj-srgconn-2}
        If $F$ is essentially surjective, the induced map $\sigma(F)\colon \sigma(\A)\to\sigma(\B)$ in $\POS$ is surjective.
    \end{enumerate}
    Consequently, we have $\sigma(\A)\cong\sigma(\B)$ whenever $F$ is an equivalence, or full and essentially surjective.
\end{proposition}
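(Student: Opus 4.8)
The plan is to unwind the explicit description of $\sigma(\C)$ as the poset whose elements are the strongly connected components of $\C$, with $[X]\le[Y]$ exactly when there is a morphism $X\to Y$, and to check each clause directly against the definitions of full and essentially surjective functors. Throughout, recall that $\sigma(F)$ is the monotone map $[X]\mapsto[F(X)]$. Its monotonicity is automatic and holds for any functor: whenever $[X]\le[Y]$ in $\sigma(\A)$ there is a morphism $X\to Y$, whose image $F(X)\to F(Y)$ witnesses $\sigma(F)([X])\le\sigma(F)([Y])$. So in both parts the only content lies in the reverse directions.

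For part \ref{prop:full-esssurj-srgconn-1}, I would prove the nontrivial half of the embedding condition, namely that $\sigma(F)([X])\le\sigma(F)([Y])$ forces $[X]\le[Y]$. By definition the hypothesis yields a morphism $g\colon F(X)\to F(Y)$ in $\B$; since $F$ is full, the map $\A(X,Y)\to\B(F(X),F(Y))$ is surjective, so $\A(X,Y)$ is nonempty, i.e.\ there is a morphism $X\to Y$, whence $[X]\le[Y]$. Combined with the automatic monotonicity, this gives the stated biconditional, so $\sigma(F)$ is order-reflecting. Injectivity then follows formally: if $\sigma(F)([X])=\sigma(F)([Y])$, then $\sigma(F)([X])\le\sigma(F)([Y])$ and conversely, so $[X]\le[Y]$ and $[Y]\le[X]$, whence $[X]=[Y]$ by antisymmetry in $\sigma(\A)$.

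For part \ref{prop:full-esssurj-srgconn-2}, I would take an arbitrary component $[Z]\in\sigma(\B)$ and use essential surjectivity to find $X\in\A$ with $F(X)\cong Z$. An isomorphism supplies morphisms in both directions between $F(X)$ and $Z$, so they are strongly connected, i.e.\ $[F(X)]=[Z]$; hence $\sigma(F)([X])=[Z]$, and $\sigma(F)$ is surjective.

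Finally, for the concluding statement I would observe that an equivalence is in particular full and essentially surjective, so both listed hypotheses reduce to that single case. By part \ref{prop:full-esssurj-srgconn-1} the map $\sigma(F)$ is a monotone, order-reflecting injection, and by part \ref{prop:full-esssurj-srgconn-2} it is surjective; a monotone, order-reflecting bijection of posets is precisely a poset isomorphism, giving $\sigma(\A)\cong\sigma(\B)$. I expect no serious obstacle here: the one point deserving care is recognizing that fullness is exactly what converts the existence of a morphism between the images into the existence of a morphism upstairs, which is the heart of the order-reflection in part \ref{prop:full-esssurj-srgconn-1}; everything else is a direct translation of definitions.
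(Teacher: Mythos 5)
Your proof is correct, and it coincides with what the paper intends: the paper's own proof is just the remark that ``the proof is straightforward,'' and your argument is precisely that straightforward unwinding of definitions (fullness gives order-reflection, essential surjectivity plus the isomorphism gives surjectivity, and the two combine into a poset isomorphism). No gaps; the use of the representative-independence of the order $[X]\le[Y]$ and of antisymmetry for injectivity are exactly the right details to make explicit.
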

\begin{proof}
    The proof is straightforward.
\end{proof}

\begin{definition}\label{def:ACC_for_cat}
    We say a category $\C$ satisfies the \emph{ascending chain condition} (ACC) if the poset $\sigma(\C)$ satisfies the ordinary ascending chain condition, i.e., $\sigma(\C)$ has no infinite strictly ascending sequence.
    Equivalently, a category $\C$ satisfies ACC if for every $\bbomega$-chain $X_0\to X_1\to\cdots$ in $\C$, there exists $N\in\bN$ such that $(X_n)_{n\ge N}$ are strongly connected to each other.
\end{definition}

\begin{proposition}\label{prop:elementary_prop_ACC}\quad
    \begin{enumerate}
        \item\label{prop:elementary_prop_ACC-1}
        If both $\A$ and $\B$ satisfy ACC, so does the product category $\A\times\B$.
        \item\label{prop:elementary_prop_ACC-2}
        If $\A\times\B$ satisfies ACC and $\B$ is non-empty, then $\A$ satisfies ACC.
        \item\label{prop:elementary_prop_ACC-3}
        If a functor $F\colon\A\to\B$ is full and $\B$ satisfies ACC, then $\A$ also satisfies ACC.
    \end{enumerate}
\end{proposition}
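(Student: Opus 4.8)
The plan is to reduce each of the three claims to an elementary order-theoretic fact by passing to the posetification, exploiting that ACC for a category $\C$ means precisely that the large poset $\sigma(\C)$ has no infinite strictly ascending sequence. The two structural inputs I would use are the preservation of products by $\sigma$ recorded in \cref{rem:posetification_preserves_products}, which gives a natural isomorphism $\sigma(\A\times\B)\cong\sigma(\A)\times\sigma(\B)$, together with the fact from \cref{prop:full-esssurj-srgconn}\ref{prop:full-esssurj-srgconn-1} that a full functor induces an order embedding on posetifications.

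For \ref{prop:elementary_prop_ACC-1} and \ref{prop:elementary_prop_ACC-2}, I would first rewrite $\sigma(\A\times\B)$ as $\sigma(\A)\times\sigma(\B)$ and then argue purely about posets. To show that a product of two ACC posets is again ACC, I would take a strictly ascending sequence $(p_n,q_n)_{n\in\bN}$ in $\sigma(\A)\times\sigma(\B)$; its coordinate projections are (weakly) ascending in $\sigma(\A)$ and $\sigma(\B)$, so by ACC each is eventually constant, whence both coordinates stabilize simultaneously, contradicting strictness. For the converse direction, non-emptiness of $\B$ supplies a point $q_0\in\sigma(\B)$, and the slice map $\sigma(\A)\to\sigma(\A)\times\sigma(\B)$, $p\mapsto(p,q_0)$, is an order embedding; a strictly ascending sequence in $\sigma(\A)$ would transport to one in the product, contradicting the hypothesis on $\A\times\B$.

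For \ref{prop:elementary_prop_ACC-3}, I would invoke \cref{prop:full-esssurj-srgconn}\ref{prop:full-esssurj-srgconn-1} to see that $\sigma(F)\colon\sigma(\A)\to\sigma(\B)$ is an order embedding, hence injective and order-reflecting. It then suffices to note that any order embedding reflects ACC: a strict inequality $x<y$ in $\sigma(\A)$ is carried to $\sigma(F)(x)<\sigma(F)(y)$, since $\sigma(F)$ preserves $\le$ and injectivity turns $x\neq y$ into $\sigma(F)(x)\neq\sigma(F)(y)$; thus a strictly ascending sequence in $\sigma(\A)$ would yield one in $\sigma(\B)$, contradicting its ACC.

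The genuine content here is conceptual rather than technical: once one observes that ACC depends only on the posetification, each part collapses to a one-line lemma about ACC posets—closure under finite products, reflection along slice maps, and reflection along embeddings. I do not expect any delicate step to arise; the only care required is to record these three standard order-theoretic facts cleanly.
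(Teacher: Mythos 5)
Your proposal is correct and takes essentially the same route as the paper, which likewise deduces \ref{prop:elementary_prop_ACC-1} and \ref{prop:elementary_prop_ACC-2} from the product-preservation of $\sigma$ (\cref{rem:posetification_preserves_products}) and \ref{prop:elementary_prop_ACC-3} from the embedding property of full functors (\cref{prop:full-esssurj-srgconn}\ref{prop:full-esssurj-srgconn-1}). The only difference is that you spell out the elementary poset-level verifications (eventual constancy of weakly ascending coordinates, reflection of ACC along slice maps and embeddings) that the paper leaves implicit, and these are all correct.
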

\begin{proof}
    \ref{prop:elementary_prop_ACC-1} and \ref{prop:elementary_prop_ACC-2} follow from the fact that the posetification $\sigma$ preserves products (\cref{rem:posetification_preserves_products}).
    \ref{prop:elementary_prop_ACC-3} follows from \cref{prop:full-esssurj-srgconn}\ref{prop:full-esssurj-srgconn-1}.
\end{proof}

\begin{example}\label{eg:srg_conn}\quad
    \begin{enumerate}
        \item
        The categories of pointed sets, monoids, groups, abelian groups, bounded semilattices, etc., are strongly connected, that is, they have a unique strongly connected component.
        This is because every object in these categories has a morphism from the terminal object.
        In particular, these categories satisfy ACC, since every finite poset satisfies ACC.
        \item
        $\Set$ has only two strongly connected components, the empty set and the non-empty sets.
        More precisely, $\sigma(\Set)\cong \2=\{0<1\}$.
        The same argument holds for the categories of posets, small categories, lattices, semilattices, etc.
        In particular, these categories satisfy ACC.
        \item
        The arrow category $\Set^\to$ has only three strongly connected components:
        \[
        \sigma(\Set^\to)=\{~~ [\varnothing\to\varnothing]~<~[\varnothing\to 1]~<~[1\to 1] ~~\}.
        \]
        \item
        Let $S$ be a set.
        Since the posetification preserves products, we get
        \[
        \sigma(\Set^S)\cong\sigma(\Set)^S\cong\2^S\cong\pow(S),
        \]
        where $\pow(S)$ is the power set of $S$.
        Thus, $\Set^S$ satisfies ACC if and only if $S$ is finite.
        \item
        Let $S$ be a non-empty set.
        The coslice category $S/\Set$ can be considered as the category of sets with $S$-indexed constants by regarding each object as a set $X$ with constants $\intpn{c_s}{X}\in X~~(s\in S)$.
        Let $X,Y$ be objects in $S/\Set$.
        If $\intpn{c_s}{X}=\intpn{c_t}{X}$ and $\intpn{c_s}{Y}\neq\intpn{c_t}{Y}$ hold for some $s,t\in S$, then there is no morphism $X\to Y$ in $S/\Set$.
        The converse is also true.
        Indeed, if $X$ and $Y$ merge their constants in the same way, there are morphisms $X\to Y$ and $Y\to X$.
        Therefore, there is a bijective correspondence between strongly connected components of $S/\Set$ and equivalence relations on $S$, hence the cardinal $\card{\sigma(n/\Set)}$ equals the $n$-th Bell number.
        Moreover, we can see that $S/\Set$ satisfies ACC if and only if $S$ is finite.
        \item
        The category of bounded lattices does not satisfy ACC.
        To see this, consider the following lattices:
        \begin{equation*}
            M_n:=\left(
            \begin{tikzcd}[tiny]
                & & 1\arrow[lld,no head]\arrow[ld,no head]\arrow[d,no head]\arrow[rrd,no head] & & \\[10pt]
                a_0 & a_1 & a_2 & \cdots & a_{n-1} \\[10pt]
                & & 0\arrow[llu,no head]\arrow[lu,no head]\arrow[u,no head]\arrow[rru,no head] & &
            \end{tikzcd}
            \right)\quad\quad (n\ge 2).
        \end{equation*}
        Then, we get an $\bbomega$-chain $M_2\to M_3\to\cdots$ of bounded lattices, and there is no bounded lattice morphism $M_n\to M_m$ whenever $n>m$.
        \item
        The category of rings, denoted by $\Ring$, has infinitely many strongly connected components.
        In fact, the rings $\bZ/n\bZ~~(n\in\bN)$ are not strongly connected to each other.
        Moreover, $\Ring$ does not satisfy ACC.
        This is because there is a non-trivial $\bbomega$-chain of finite fields:
        \begin{equation*}
            \bF_p\hookrightarrow\bF_{p^2}\hookrightarrow\bF_{p^4}\hookrightarrow
            \cdots\hookrightarrow\bF_{p^{2^n}}\hookrightarrow\cdots,
        \end{equation*}
        where $p$ is a prime number and $\bF_{p^{2^n}}$ is the finite field of order $p^{2^n}$.
        \item
        The category of semigroups does not satisfy ACC.
        To see this, let $S_n$ denote the semigroup generated by $n+1$ elements $x_0,x_1,\dots,x_n$ with the equations $x_i=x_i x_j~(\forall i>j)$.
        Then, we get an $\bbomega$-chain $S_0\to S_1\to S_2\to\cdots$ of semigroups.
        Note that each element of $S_n$ can be uniquely written as $x_0^{a_0}x_1^{a_1}\cdots x_n^{a_n}$, where $a_i\in\bN$ and at least one of them is non-zero.
        We define the \emph{degree} of an element $\alpha=x_0^{a_0}x_1^{a_1}\cdots x_n^{a_n}\in S_n$, denoted by $\degree\alpha$, as the largest number $i$ such that $a_i\neq 0$.
        
        We now claim that $\alpha=\alpha\beta$ in $S_n$ implies $\degree\alpha >\degree\beta$.
        To show this, take elements $\alpha=x_0^{a_0}x_1^{a_1}\cdots x_N^{a_N}$ $(a_N\neq 0)$ and $\beta=x_0^{b_0}x_1^{b_1}\cdots x_n^{b_n}$ of $S_n$ such that $\alpha=\alpha\beta$.
        Then, it follows that
        \[
            \alpha=\alpha\beta=x_0^{a_0}\cdots x_{N-1}^{a_{N-1}}x_N^{a_N+b_N}x_{N+1}^{b_{N+1}}\cdots x_n^{b_n}.
        \]
        Comparing the exponents, we have $b_N=b_{N+1}=\cdots =b_n=0$, hence $\degree\beta <N=\degree\alpha$.

        If there exists a morphism $f\colon S_n\to S_m$, by the above argument, we have
        \[
            m\ge\degree f(x_n)>\degree f(x_{n-1})>\cdots >\degree f(x_0)\ge 0.
        \]
        This implies $n\le m$.
        Thus, the $\bbomega$-chain $S_0\to S_1\to S_2\to\cdots$ has no morphism in the converse direction.
        \item
        The category $\End$ of sets with an endomorphism does not satisfy ACC.
        To show this, let $C_n:=(\bZ/n\bZ, +1)\in\End$ denote the $n$-cycle, and let $A_n:=\coprod_{1\le k\le n}C_{p_k}$, where $p_k$ denotes the $k$-th prime number.
        Then, we get an $\bbomega$-chain $A_1\to A_2\to\cdots$ of coprojections, and there is no morphism $A_n\to A_m$ whenever $n>m$.
        \item
        The category $\Quiv$ of quivers (or directed graphs) does not satisfy ACC.
        This follows from \cref{prop:elementary_prop_ACC}\ref{prop:elementary_prop_ACC-3} and the fact that there is a fully faithful functor $\End\to\Quiv$ defined by:
        \[
        \End\ni
        \begin{tikzcd}
            A\arrow[loop right,"a"]
        \end{tikzcd}
        \quad\mapsto\quad
        \begin{tikzcd}
            A\arrow[r,bend left=20,"a"]\arrow[r,bend right=20,"\id"'] & A
        \end{tikzcd}
        \in\Quiv.
        \]
        \item
        Let $\bbomega$ denote the category $\{ 0\to 1\to 2\to\cdots \}$.
        Then, the presheaf category $\Set^{\bbomega^\op}$ does not satisfy ACC.
        To see this, define $L_i\in\Set^{\bbomega^\op}~(i\in\bN)$ by
        \begin{equation*}
            L_i(n):=
            \begin{cases*}
                1 & if $n<i$, \\
                \varnothing & if $n\ge i$.
            \end{cases*}
        \end{equation*}
        Then we get a sequence $L_0\to L_1\to\cdots$ in $\Set^{\bbomega^\op}$, and there is no natural transformation $L_n\to L_m$ whenever $n>m$.
        \item
        On the other hand, $\Set^\bbomega$ satisfies ACC:
        For each ordinal $\alpha\le\omega$, define $U_\alpha\in\Set^\bbomega$ by
        \begin{equation*}
            U_\alpha(n):=
            \begin{cases*}
                \varnothing & if $n<\alpha$, \\
                1 & if $n\ge\alpha$.
            \end{cases*}
        \end{equation*}
        Then, the strongly connected components of $\Set^\bbomega$ are displayed as follows:
        \begin{equation*}
            \sigma(\Set^\bbomega)=\{~~[U_\omega]~<\cdots<~[U_2]~<~[U_1]~<~[U_0]~~\}~~\cong(\omega+1)^\op.
        \end{equation*}
        It follows that the poset $(\omega+1)^\op$ satisfies ACC.
        \item\label{eg:srg_conn-cospan}
        Let $\Cospan$ be the category of cospans in $\Set$, i.e., 
        an object in $\Cospan$ is a functor from the category $\{\cdot\rightarrow\cdot\leftarrow\cdot\}$ to $\Set$, and a morphism is a natural transformation.
        Then, the category $\Cospan$ has only six strongly connected components:
        \begin{equation*}
            S_0\colon
            \begin{tikzcd}[tiny]
                & \varnothing & \\
                \varnothing\arrow[ru] & & \varnothing,\arrow[lu]
            \end{tikzcd}\quad
            S_1\colon
            \begin{tikzcd}[tiny]
                & 1 & \\
                \varnothing\arrow[ru] & & \varnothing,\arrow[lu]
            \end{tikzcd}\quad
            S_2\colon
            \begin{tikzcd}[tiny]
                & 1 & \\
                1\arrow[ru] & & \varnothing,\arrow[lu]
            \end{tikzcd}
        \end{equation*}
        \begin{equation*}
            S_3\colon
            \begin{tikzcd}[tiny]
                & 1 & \\
                \varnothing\arrow[ru] & & 1,\arrow[lu]
            \end{tikzcd}\quad
            S_4\colon
            \begin{tikzcd}[tiny]
                & 2 & \\
                1\arrow[ru,"\const{0}"] & & 1,\arrow[lu,"\const{1}"']
            \end{tikzcd}\quad
            S_5\colon
            \begin{tikzcd}[tiny]
                & 1 & \\
                1\arrow[ru] & & 1.\arrow[lu]
            \end{tikzcd}
        \end{equation*}
        \item\label{eg:srg_conn-doblcospan}
        Let ${\bowtie}$ denote the following category:
        \begin{equation*}
            {\bowtie}:=\left(
            \begin{tikzcd}
                \cdot & \cdot \\
                \cdot\arrow[u]\arrow[ur] & \cdot\arrow[u]\arrow[ul,crossing over]
            \end{tikzcd}
            \right)
        \end{equation*}
        Then, the presheaf category $\Set^{\bowtie}$ does not satisfy ACC and, in particular, has infinitely many strongly connected components even though $\sigma(\Cospan)$ is finite.
        This is because there is a fully faithful functor $\End\to\Set^{\bowtie}$:
        \[
        \End\ni (A,a)\quad\mapsto\quad
        \begin{tikzcd}
            A & A \\
            A\arrow[u,equal]\arrow[ur,equal] & A\arrow[u,equal]\arrow[ul,crossing over,"a"',pos=0.8]
        \end{tikzcd}
        \in\Set^{\bowtie}.
        \]
        \item\label{eg:srg_conn-X}
        Let $\bX$ denote the following category:
        \begin{equation*}
            \bX:=\left(
            \begin{tikzcd}[tiny]
                \cdot & & \cdot \\
                & \cdot\arrow[ul]\arrow[ur] & \\
                \cdot\arrow[ur] & & \cdot\arrow[ul]
            \end{tikzcd}
            \right)
        \end{equation*}
        We now get a fully faithful functor $\phi\colon{\bowtie}\to\bX$ by ignoring the crossing point in $\bX$.
        Then, the global left Kan extension $\Lan_\phi\colon \Set^{\bowtie}\to\Set^\bX$ along $\phi$ is fully faithful, hence the category $\Set^\bX$ also does not satisfy ACC.
    \end{enumerate}
\end{example}

\cref{tab:ACC_for_LFP} at the end of this paper summarizes the above examples and includes more.

\section{Filtered colimit elimination}\label{section5}
Our goal is to prove \cref{thm:filcolim_elim}, \emph{filtered colimit elimination}.
This is our main theorem and generalizes some Birkhoff-type theorems for finite-sorted algebras \cite{adamek2012birkhoffs} and ordered algebras \cite{bloom1976varieties}.

\begin{lemma}\label{lem:diagram_strongly_connected}
    Let $\C$ be a category which satisfies ACC.
    Let $D\colon\bI\to\C$ be a non-empty diagram.
    Then, there exists an object $I_0\in\bI$ such that for every morphism $I_0\to I$ in $\bI$, $DI_0$ and $DI$ are strongly connected in $\C$.
\end{lemma}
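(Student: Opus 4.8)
The plan is to argue by contradiction, translating the statement into the posetification $\sigma(\C)$ and then building a strictly ascending $\bbomega$-chain that violates ACC. The starting observation is that any morphism $I_0\to I$ in $\bI$ is sent by $D$ to a morphism $DI_0\to DI$ in $\C$, so it forces the inequality $[DI_0]\le[DI]$ in $\sigma(\C)$. Consequently, $DI_0$ and $DI$ are strongly connected if and only if $[DI_0]=[DI]$, which (given the automatic $\le$) is the same as $[DI]\le[DI_0]$. Thus the object $I_0$ we seek is exactly one whose component $[DI_0]$ is never \emph{strictly} increased along an outgoing morphism of $\bI$.

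\textbf{Key steps.} First I would negate the conclusion: suppose that for every object $J\in\bI$ there is a morphism $J\to J'$ in $\bI$ for which $DJ$ and $DJ'$ are not strongly connected. By the remark above, such a morphism yields $[DJ]<[DJ']$ strictly in $\sigma(\C)$. Since the diagram is non-empty, I fix some $I_0\in\bI$ and then, by dependent choice, construct a sequence of objects $(I_n)_{n\in\bN}$ together with morphisms $I_n\to I_{n+1}$ in $\bI$ satisfying $[DI_n]<[DI_{n+1}]$ for every $n$: at each stage the negated hypothesis applied to $I_n$ supplies the next morphism and the next object. Applying $D$ to this sequence gives an $\bbomega$-chain
\begin{equation*}
    DI_0\to DI_1\to DI_2\to\cdots \incat{\C}
\end{equation*}
whose images in $\sigma(\C)$ form a strictly ascending chain $[DI_0]<[DI_1]<[DI_2]<\cdots$.

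\textbf{Conclusion and obstacle.} This strictly ascending chain contradicts ACC. Concretely, by the reformulation in \cref{def:ACC_for_cat}, there should exist $N\in\bN$ such that all $DI_n$ with $n\ge N$ are strongly connected to one another, which would force $[DI_N]=[DI_{N+1}]$ and contradict $[DI_N]<[DI_{N+1}]$. Hence the negated hypothesis is impossible and the desired $I_0$ exists. The argument is essentially routine; the only points requiring care are the orientation of the inequalities (making sure the automatic $[DI_0]\le[DI]$ together with non-strong-connectedness yields a \emph{strict} increase) and the mild use of dependent choice to extract the infinite chain. I do not expect any serious obstacle beyond keeping these bookkeeping details straight.
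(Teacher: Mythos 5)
Your proof is correct and follows essentially the same route as the paper's own argument: negate the conclusion, use dependent choice to build an $\bbomega$-chain $I_0\to I_1\to\cdots$ in $\bI$ whose images give a strictly ascending sequence $[DI_0]<[DI_1]<\cdots$ in $\sigma(\C)$, contradicting ACC. The only difference is that you spell out the bookkeeping (the automatic inequality $[DI_n]\le[DI_{n+1}]$ plus non-strong-connectedness giving strictness) which the paper leaves implicit.
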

\begin{proof}
    Assume that for every object $I\in\bI$, there is a morphism $I\to J$ in $\bI$ such that $DI$ and $DJ$ are not strongly connected.
    Since $\bI$ is non-empty, by assumption, we can take an $\bbomega$-chain $I_0\to I_1\to I_2\to\cdots$ in $\bI$ such that for any $n$, $DI_n$ and $DI_{n+1}$ are not strongly connected.
    Then, we have a strictly increasing sequence $\sigma(DI_0)<\sigma(DI_1)<\sigma(DI_2)<\cdots$ in $\sigma(\C)$.
    This contradicts our assumption that $\sigma(\C)$ satisfies ACC.
\end{proof}

\begin{definition}[\cite{kawase2023birkhoffs}]
    Let $\rho\colon (S,\Sigma,\bS)\to (S',\Sigma',\bT)$ be a theory morphism between partial Horn theories.
    A \emph{$\rho$-relative judgment} is a Horn sequent $\phi^\rho\seq{\tup{x}^\rho}\psi$, where $\tup{x}.\phi$ is a Horn formula-in-context over $\Sigma$ and $\tup{x}^\rho.\psi$ is a Horn formula-in-context over $\Sigma'$.
\end{definition}

\begin{definition}
    Let $U\colon\A\to\C$ be a functor.
    A morphism $p$ in $\A$ is called a \emph{$U$-local retraction} if $Up$ is a local retraction in $\C$.
\end{definition}

We can now formulate our main result.

\begin{theorem}[Filtered colimit elimination]\label{thm:filcolim_elim}
    Let $\rho\colon\bS\to\bT$ be a theory morphism between partial Horn theories.
    Assume that $\bS\-\PMod$ satisfies the ascending chain condition.
    Then, for every replete full subcategory $\E\subseteq\bT\-\PMod$, the following are equivalent:
    \begin{enumerate}
        \item\label{thm:filcolim_elim-1}
        $\E$ is definable by $\rho$-relative judgments, i.e., there exists a set $\bT'$ of $\rho$-relative judgments satisfying $\E=(\bT+\bT')\-\PMod$.
        \item\label{thm:filcolim_elim-2}
        $\E\subseteq\bT\-\PMod$ is closed under products, $\bT$-closed subobjects and $U^\rho$-local retracts.
        \item\label{thm:filcolim_elim-3}
        $\E\subseteq\bT\-\PMod$ is closed under products, $\bT$-closed subobjects, $U^\rho$-retracts and filtered colimits.
    \end{enumerate}
\end{theorem}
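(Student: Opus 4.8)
The plan is to establish the cycle $\ref{thm:filcolim_elim-1}\implies\ref{thm:filcolim_elim-3}\implies\ref{thm:filcolim_elim-2}\implies\ref{thm:filcolim_elim-1}$. Two of these arrows require little new work. The implication $\ref{thm:filcolim_elim-1}\implies\ref{thm:filcolim_elim-3}$ is the generalized Birkhoff's theorem of \cite{kawase2023birkhoffs}, so I would simply invoke it. The implication $\ref{thm:filcolim_elim-3}\implies\ref{thm:filcolim_elim-2}$ is essentially formal: by \cref{prop:locret_filtered_colim_of_retracts}, a $U^\rho$-local retraction is (after applying $U^\rho$) a filtered colimit of retractions, and one wants to upgrade this to a statement in $\bT\-\PMod$. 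The point is that a $U^\rho$-local retract of an object of $\E$ can be realized as a filtered colimit of $U^\rho$-retracts of objects of $\E$; closure under $U^\rho$-retracts and under filtered colimits then finishes the job. I would carry out this realization by pulling back the local retraction along a filtered diagram presenting the codomain, mirroring the proof of \cref{prop:locret_filtered_colim_of_retracts}, and checking the pullbacks can be taken inside $\bT\-\PMod$.

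**The heart of the matter.**

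The genuinely new content — and the hard part — is $\ref{thm:filcolim_elim-2}\implies\ref{thm:filcolim_elim-1}$, which must do the real work of \emph{eliminating} the filtered-colimit hypothesis. Here is where the ascending chain condition on $\bS\-\PMod$ enters decisively, through \cref{lem:diagram_strongly_connected}. The strategy is the standard Birkhoff argument: given $\E$ closed under products, $\bT$-closed subobjects, and $U^\rho$-local retracts, I would define $\bT'$ to be the set of all $\rho$-relative judgments valid in every object of $\E$, and then show $\E=(\bT+\bT')\-\PMod$. The inclusion $\E\subseteq(\bT+\bT')\-\PMod$ is immediate. For the reverse inclusion, take $A\in(\bT+\bT')\-\PMod$; the task is to exhibit $A$ as built from objects of $\E$ using only the three allowed operations. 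Following Birkhoff, one maps a suitable free-type object (a product of members of $\E$ indexed by homomorphisms out of it, i.e.\ a ``$\E$-cogenerated'' approximation) onto $A$ and argues that the comparison is a $\bT$-closed subobject composed with a local retraction.

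**Where ACC does the elimination.**

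In the classical theorem one would need $A$ to be a \emph{filtered colimit} of finitely presentable pieces, each handled separately, and then invoke closure under filtered colimits to reassemble. The role of ACC, via \cref{lem:diagram_strongly_connected}, is precisely to collapse that filtered colimit: applied to the relevant diagram in $\bS\-\PMod$ (through the forgetful functor $U^\rho$), ACC guarantees a cofinal object $I_0$ such that all later terms are \emph{strongly connected}, which supplies back-and-forth morphisms and lets me replace a genuine filtered colimit by a single stage up to local retraction. I expect the main obstacle to lie exactly in this replacement step: I must verify that the back-morphisms provided by strong connectedness in $\bS\-\PMod$ can be chosen compatibly enough to produce an honest $U^\rho$-local retraction onto $A$ (rather than merely a cocone), and that the resulting factorization respects the $\bT$-structure so that the subobject appearing is $\bT$-closed. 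Once strong connectedness furnishes a map back from a late stage, the defining lifting property of local retractions — lifting against finitely presentable objects — should match up with the finitary presentation of $A$, and the construction closes. The remaining verifications (that products, $\bT$-closed subobjects, and local retractions suffice to generate $A$, and that $A$ satisfies exactly the judgments forced by $\bT'$) are routine Birkhoff bookkeeping, and I would treat them as such.
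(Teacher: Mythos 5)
Your cycle $\ref{thm:filcolim_elim-1}\implies\ref{thm:filcolim_elim-3}\implies\ref{thm:filcolim_elim-2}\implies\ref{thm:filcolim_elim-1}$ is a legitimate decomposition, and citing \cite{kawase2023birkhoffs} for $\ref{thm:filcolim_elim-1}\implies\ref{thm:filcolim_elim-3}$ is fine; but both arrows you actually have to prove are defective. First, $\ref{thm:filcolim_elim-3}\implies\ref{thm:filcolim_elim-2}$ is not ``essentially formal,'' and your pullback argument breaks at two points. Writing $Y=\Colim{I\in\bI}Y_I$ with $Y_I$ finitely presentable in $\bT\-\PMod$ and pulling back a $U^\rho$-local retraction $p\colon X\to Y$ gives $p_I\colon X_I\to Y_I$, and $U^\rho(p_I)$ is again a local retraction by \cref{lem:basic_property_locret}\ref{lem:basic_property_locret-4}; but to split it, as in \cref{prop:locret_filtered_colim_of_retracts}, you would need $U^\rho(Y_I)$ to be finitely presentable in $\bS\-\PMod$, and forgetful functors of this kind do not preserve finite presentability (for $\rho$ the theory morphism from sets to groups, $\bZ$ is finitely presentable in $\Grp$ but its underlying set is not finitely presentable in $\Set$). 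So the $p_I$ need not be $U^\rho$-retractions. Moreover, even if they were, their domains $X_I=X\times_Y Y_I$ need not lie in $\E$: the coprojections $Y_I\to Y$ are not monomorphisms, so $X_I$ is not even a subobject of $X$, and $\E$ is not assumed closed under pullbacks. The implication is true, but the proof goes \emph{through} \ref{thm:filcolim_elim-1}: the paper proves $\ref{thm:filcolim_elim-1}\implies\ref{thm:filcolim_elim-2}$ by presenting the definable class as an orthogonality class with respect to epimorphisms $e_i\colon F^\rho(\Gamma_i)\to\Delta_i$ with $\Gamma_i$ finitely presentable in $\bS\-\PMod$; a map $F^\rho(\Gamma_i)\to B$ lifts along a $U^\rho$-local retraction $A\to B$ by adjunction, and orthogonality of $A$ together with $e_i$ being epi yields orthogonality of $B$.

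Second, and more seriously, the heart of your plan ($\ref{thm:filcolim_elim-2}\implies\ref{thm:filcolim_elim-1}$) rests on a mechanism that is false: ACC does \emph{not} let you ``replace a genuine filtered colimit by a single stage up to local retraction.'' In $\Pos$ (take $\rho=\id$; note $\Pos$ satisfies ACC) all nonempty finite posets are pairwise strongly connected via constant maps, yet $\omega=\Colim{n<\omega}n$ is not a local retract of any single finite poset $P$: a local retraction $p\colon P\to\omega$ would lift the inclusion of the finite ordinal $\card{P}+1$ into $\omega$ to an injection into $P$, which is impossible. What ACC actually buys --- and this is the paper's proof of $\ref{thm:filcolim_elim-2}\implies\ref{thm:filcolim_elim-3}$, which your route would have to reproduce inside $\ref{thm:filcolim_elim-2}\implies\ref{thm:filcolim_elim-1}$ --- is subtler: after \cref{lem:diagram_strongly_connected} one may assume all $U^\rho(A_I)$ are pairwise strongly connected; then for each $J$ one forms the limit $B_J=\Lim{I\in\bI_J}A_I$ over a \emph{modified} poset $\bI_J$ that keeps all objects of $\bI$ but only the morphisms whose domain lies above $J$. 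This modification is exactly what dissolves the compatibility problem you flag: the arbitrarily chosen back-maps $U^\rho(A_J)\to U^\rho(A_I)$ (for $I\not\ge J$) are subject to no cone conditions, so together with the chain maps they assemble into a section of $U^\rho(\pi_J)$, making $\pi_J\colon B_J\to A_J$ a $U^\rho$-retraction. Each $B_J$ is a $\bT$-closed subobject of $\prod_I A_I$, hence so is $\Colim{J\in\bI}B_J$, which therefore lies in $\E$; and $\Colim{J\in\bI}\pi_J$ is a $U^\rho$-local retraction onto $A$ by \cref{prop:locret_filtered_colim_of_retracts}. Thus the colimit is exhibited as a $U^\rho$-local retract of a $\bT$-closed subobject of a product of \emph{all} the stages --- the three closure properties are used jointly, not a single-stage replacement --- and this construction is precisely the content missing from your sketch. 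Once one has it, it is also more economical to package it as $\ref{thm:filcolim_elim-2}\implies\ref{thm:filcolim_elim-3}$ and then quote $\ref{thm:filcolim_elim-3}\implies\ref{thm:filcolim_elim-1}$, rather than re-running the whole Birkhoff argument from scratch.
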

\begin{proof}
    {[\ref{thm:filcolim_elim-3}$\implies$\ref{thm:filcolim_elim-1}]}
    This follows from \cite[Theorem 3.5]{kawase2023birkhoffs}.
    
    {[\ref{thm:filcolim_elim-1}$\implies$\ref{thm:filcolim_elim-2}]}
    Consider the following adjunction:
    \begin{equation*}
    \begin{tikzcd}[column sep=large, row sep=large]
        \bS\-\PMod\arrow[r,"F^\rho",shift left=7pt]\arrow[r,"\perp"pos=0.5,phantom] &[10pt]\bT\-\PMod\arrow[l,"U^\rho",shift left=7pt]
    \end{tikzcd}
    \end{equation*}
    By the same argument as \cite[Theorem 3.5]{kawase2023birkhoffs}, $(\bT+\bT')\-\PMod\subseteq\bT\-\PMod$ can be presented as an orthogonality class with respect to a family of epimorphisms
    \begin{equation*}
        \Lambda=\{ F^\rho(\Gamma_i)\longarr{e_i}\Delta_i \}_{i\in I}
    \end{equation*}
    such that all $\Gamma_i$ are finitely presentable in $\bS\-\PMod$.
    By \cite[Theorem 3.5]{kawase2023birkhoffs} again, we already know that $(\bT+\bT')\-\PMod\subseteq\bT\-\PMod$ is closed under products and $\bT$-closed subobjects.
    Thus, we only need to show that $(\bT+\bT')\-\PMod\subseteq\bT\-\PMod$ is closed under $U^\rho$-local retracts.
    Let $p\colon A\to B$ be a $U^\rho$-local retraction with $A\in (\bT+\bT')\-\PMod$.
    Given a morphism $f\colon F^\rho(\Gamma_i)\to B$, since $p\colon A\to B$ is a $U^\rho$-local retraction, we have a morphism $g\colon F^\rho(\Gamma_i)\to A$ such that $pg=f$.
    Since $A$ is orthogonal to $e_i$, there is a unique morphism $h\colon\Delta_i\to A$ such that $he_i=g$.
    Then $f=pg=phe_i$ holds, hence $B$ is orthogonal to $e_i$.
    \begin{equation*}
        \begin{tikzcd}[column sep=huge, row sep=huge]
            F^\rho(\Gamma_i)\arrow[d,"e_i"']\arrow[rd,"f"',pos=0.7]\arrow[r,"g"] & A\arrow[d,"p"] \\
            \Delta_i\arrow[ru,"\exists !h",pos=0.7,crossing over,dashed] & B
        \end{tikzcd}\quad\quad\incat{\bT\-\PMod}
    \end{equation*}
    
    {[\ref{thm:filcolim_elim-2}$\implies$\ref{thm:filcolim_elim-3}]}
    It suffices to show that $\E\subseteq\bT\-\PMod$ is closed under filtered colimits.
    Let $A=\Colim{I\in\bI}A_I$ be a filtered colimit in $\bT\-\PMod$ with $A_I\in\E$.
    By \cite[1.5 Theorem]{adamek1994locally}, without loss of generality, we can assume that $\bI$ is a directed poset.
    
    By \cref{lem:diagram_strongly_connected}, there exists an object $I_0\in\bI$ such that $U^\rho (A_{I_0})$ and $U^\rho (A_I)$ are strongly connected in $\bS\-\PMod$ whenever $I_0\le I$ in $\bI$.
    Since $\bI$ is directed, the coslice
    \[
    {\uparrow}I_0:=\{I\mid I_0\le I\}\subseteq\bI
    \]
    is also directed and its inclusion is final.
    Thus, there is no loss of generality in assuming that all $U^\rho(A_I)$ are strongly connected to each other.

    We now define a subposet $\bI_J$ of $\bI$ for each $J\in\bI$.
    The subposet $\bI_J$ contains all objects in $\bI$.
    The ordering of $\bI_J$ is defined by the following:
    \begin{equation*}
        \bI_J(I,I'):=
        \begin{cases*}
            \bI(I,I') & if $J\le I$ in $\bI$, \\
            \{ \id_I \} & if $I=I'$ and $J\not\le I$ in $\bI$, \\
            \varnothing & if $I\neq I'$ and $J\not\le I$ in $\bI$.
        \end{cases*}
    \end{equation*}
    Take the limit $B_J:=\Lim{I\in\bI_J}A_I$ and let $\pi_J\colon B_J\to A_J$ be the $J$-th projection.

    We now show that $\pi_J\colon B_J\to A_J$ is a $U^\rho$-retraction.
    For each $I\in\bI$ with $J\le I$ in $\bI$, we have a morphism $s_I\colon U^\rho(A_J)\longarr{U^\rho A_!}U^\rho(A_I)$, where $!\colon J\to I$ is a unique morphism in $\bI$.
    For each $I\in\bI$ with $J\not\le I$ in $\bI$, since all $U^\rho(A_I)$ are strongly connected to each other, we can choose a morphism $s_I\colon U^\rho(A_J)\to U^\rho(A_I)$.
    Then, since $U^\rho$ preserves limits, we have a morphism $s=(s_I)_I\colon U^\rho(A_J)\to \Lim{I\in\bI_J}U^\rho(A_I)=U^\rho(B_J)$, which is a section of $U^\rho(\pi_J)$.

    Note that the canonical morphism $m_J\colon B_J=\Lim{I\in\bI_J}A_I\to\prod_{I}A_I$ is a strong monomorphism and particularly $\bT$-closed.
    By \cite[Theorem 4.14]{kawase2023birkhoffs}, the class of all $\bT$-closed monomorphisms is a right orthogonal class of a family of epimorphisms between finitely presentable objects.
    From this, we can easily see that a filtered colimit of $\bT$-closed monomorphisms is again a $\bT$-closed monomorphism.
    In particular, the filtered colimit $m=(m_J)_J\colon\Colim{J\in\bI}B_J\to\prod_I A_I$ is a $\bT$-closed monomorphism, hence $\Colim{J\in\bI}B_J\in\E$.
    
    We now take the filtered colimit of $(\pi_J)_J$ as follows:
    \begin{equation*}
        \begin{tikzcd}[small]
            \prod_{I}A_I & & & \\
            & \Colim{J\in\bI}B_J\arrow[ul,"m"',hook']\arrow[rr,"\Colim{J\in\bI}\pi_J"] & & \Colim{J\in\bI}A_J \\
            B_J\arrow[uu,"m_J",hook]\arrow[ru,hook]\arrow[rr,"\pi_J"'] & & A_J\arrow[ru] &
        \end{tikzcd}\quad\quad\incat{\bT\-\PMod}.
    \end{equation*}
    Since $U^\rho$ preserves filtered colimits, by \cref{prop:locret_filtered_colim_of_retracts}, $\Colim{J\in\bI}\pi_J$ is a $U^\rho$-local retraction.
    Consequently, we have $\Colim{J\in\bI}A_J\in\E$.
\end{proof}

The above theorem gives several useful corollaries.
Taking $\rho$ as the trivial extension $\rho\colon (S,\varnothing,\varnothing)\to (S,\Sigma,\bT)$, we obtain the first corollary:
\begin{corollary}\label{cor:birkhoff_1}
    Let $(S,\Sigma,\bT)$ be a partial Horn theory such that the set $S$ of sorts is finite.
    Then, for every replete full subcategory $\E\subseteq\bT\-\PMod$, the following are equivalent:
    \begin{enumerate}
        \item
        $\E$ is definable by Horn formulas, i.e., 
        there exists a set $E$ of Horn formulas over $\Sigma$ satisfying $\E=(\bT+\bT')\-\PMod$, where $\bT':=\{ \top\seq{\tup{x}}\phi\}_{\tup{x}.\phi\in E}.$
        \item
        $\E\subseteq\bT\-\PMod$ is closed under products, $\Sigma$-closed subobjects, and surjections.
    \end{enumerate}
\end{corollary}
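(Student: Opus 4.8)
The plan is to deduce the corollary from \cref{thm:filcolim_elim} applied to the trivial theory morphism $\rho\colon (S,\varnothing,\varnothing)\to (S,\Sigma,\bT)$, verifying that each of its three equivalent conditions specializes to the corresponding condition of the corollary.

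First I would identify the base category and check the hypothesis. The source theory $(S,\varnothing,\varnothing)$ has no function or relation symbols and no axioms, so a partial model of it is merely an $S$-indexed family of sets; hence $\bS\-\PMod\cong\Set^S$, and the induced $U^\rho\colon\bT\-\PMod\to\Set^S$ is the underlying-family functor $M\mapsto (M_s)_{s\in S}$. By the computation $\sigma(\Set^S)\cong\pow(S)$ in \cref{eg:srg_conn}, the base $\Set^S$ satisfies the ascending chain condition exactly because $S$ is finite, so \cref{thm:filcolim_elim} is applicable.

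Next I would match condition \ref{thm:filcolim_elim-2} with condition (ii). Products and $\bT$-closed subobjects transfer verbatim, since ``$\bT$-closed'' and ``$\Sigma$-closed'' are synonymous. The only substantive point is that $U^\rho$-local retractions coincide with surjections. A local retraction in $\Set$ is a surjection, and in $\Set^S$ the finitely presentable objects are the finite families; testing the defining lifting property against the one-point family concentrated at a single sort $s$ forces the component at $s$ to be surjective, while conversely a componentwise surjection admits all such lifts. Thus $U^\rho p$ is a local retraction in $\Set^S$ if and only if $p$ is surjective on every underlying set, i.e.\ $U^\rho$-local retractions are exactly the surjections.

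Finally I would match condition \ref{thm:filcolim_elim-1} with condition (i). A $\rho$-relative judgment is a sequent $\phi^\rho\seq{\tup{x}^\rho}\psi$ in which $\phi$ is a Horn formula over the empty signature and $\psi$ is a Horn formula over $\Sigma$; since the empty signature carries no function or relation symbols, such a $\phi$ is a conjunction of equations between variables, which merely imposes an equivalence relation on the context. Quotienting the context by this relation and substituting into $\psi$ rewrites $\phi^\rho\seq{\tup{x}^\rho}\psi$ as a sequent of the form $\top\seq{\tup{y}}\psi'$ with $\psi'$ a Horn formula over $\Sigma$, and conversely every such $\top\seq{\tup{y}}\psi'$ is already a $\rho$-relative judgment. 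Hence definability by $\rho$-relative judgments agrees with definability by Horn formulas as in condition (i), and the two listed conditions of the corollary are thereby shown equivalent. The step most deserving of care is the identification of $U^\rho$-local retractions with surjections, which rests on the explicit description of local retractions in $\Set^S$.
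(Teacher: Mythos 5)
Your proposal is correct and takes exactly the paper's approach: the paper obtains this corollary by applying \cref{thm:filcolim_elim} to the trivial theory morphism $\rho\colon(S,\varnothing,\varnothing)\to(S,\Sigma,\bT)$, leaving all verifications implicit. The details you supply---that $\bS\-\PMod\cong\Set^S$ satisfies ACC precisely because $S$ is finite, that $U^\rho$-local retractions are the componentwise surjections, and that $\rho$-relative judgments reduce to sequents of the form $\top\seq{\tup{x}}\psi$ by substituting away the variable equations in the premise---are precisely the omitted steps, and each is carried out correctly.
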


Taking $\rho$ as the identity $\bT\to\bT$, we obtain the second corollary:
\begin{corollary}\label{cor:birkhoff_2}
    Let $(S,\Sigma,\bT)$ be a partial Horn theory such that $\bT\-\PMod$ satisfies ACC.
    Then, for every replete full subcategory $\E\subseteq\bT\-\PMod$, the following are equivalent:
    \begin{enumerate}
        \item
        $\E$ is definable by Horn sequents, i.e., 
        there exists a set $\bT'$ of Horn sequents over $\Sigma$ satisfying $\E=(\bT+\bT')\-\PMod$.
        \item
        $\E\subseteq\bT\-\PMod$ is closed under products, $\Sigma$-closed subobjects, and local retracts.
    \end{enumerate}
\end{corollary}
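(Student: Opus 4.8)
The plan is to obtain this statement as an immediate specialization of \cref{thm:filcolim_elim}, applied to the identity theory morphism $\id_\bT\colon\bT\to\bT$. First I would confirm that $\id_\bT$ is a legitimate theory morphism: it sends each sort, function symbol, and relation symbol to itself, and every axiom $\phi\seq{\tup{x}}\psi$ of $\bT$ translates to itself, which is trivially a PHL-theorem of $\bT$. Setting $\bS:=\bT$ and $\rho:=\id_\bT$, the hypothesis of \cref{thm:filcolim_elim} that $\bS\-\PMod$ satisfies ACC becomes exactly the assumption that $\bT\-\PMod$ satisfies ACC.

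Next I would unwind the conditions \ref{thm:filcolim_elim-1} and \ref{thm:filcolim_elim-2} of \cref{thm:filcolim_elim} under this specialization. The forgetful functor $U^\rho=U^{\id_\bT}$ is the identity on $\bT\-\PMod$, so a $U^\rho$-local retraction is simply a local retraction; and since $\bT$-closed and $\Sigma$-closed are synonymous, condition \ref{thm:filcolim_elim-2} becomes precisely the second condition of the corollary. For condition \ref{thm:filcolim_elim-1}, I would check that the class of $\rho$-relative judgments coincides with the class of Horn sequents over $\Sigma$: a $\rho$-relative judgment has the form $\phi^\rho\seq{\tup{x}^\rho}\psi$ with $\tup{x}.\phi$ and $\tup{x}^\rho.\psi$ Horn formulas over $\Sigma'=\Sigma$; since $\rho$ is the identity we have $\tup{x}^\rho=\tup{x}$ and $\phi^\rho=\phi$, so this is an arbitrary Horn sequent $\phi\seq{\tup{x}}\psi$ over $\Sigma$. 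Conversely, every Horn sequent over $\Sigma$ arises in this way, so definability by $\rho$-relative judgments and definability by Horn sequents denote the same notion.

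With the hypotheses and both conditions matched, the equivalence asserted by the corollary follows directly from the equivalence between \ref{thm:filcolim_elim-1} and \ref{thm:filcolim_elim-2} in the main theorem. I expect no analytic obstacle here; the only point that demands care is the bookkeeping in the previous paragraph, namely confirming that the identity translation $\phi^{\id}=\phi$ yields exactly the class of all Horn sequents over $\Sigma$, neither a strict sub- nor a superclass, so that no definability power is lost or spuriously introduced in passing from the relative formulation to the absolute one.
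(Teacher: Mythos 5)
Your proposal is correct and matches the paper's own proof exactly: the paper obtains this corollary by taking $\rho$ to be the identity theory morphism $\bT\to\bT$ in \cref{thm:filcolim_elim}, which is precisely your specialization. The bookkeeping you spell out (that $U^{\id}$-local retractions are local retractions, that $\bT$-closed equals $\Sigma$-closed, and that $\id$-relative judgments are exactly Horn sequents over $\Sigma$) is left implicit in the paper but is the right verification to make.
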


Taking $\rho$ as a ``relative algebraic theory,'' we obtain the third corollary:
\begin{corollary}\label{cor:birkhoff_3}
    Let $(S,\Sigma,\bS)$ be a partial Horn theory such that $\bS\-\PMod$ satisfies ACC.
    Let $(\Omega,E)$ be an $\bS$-relative algebraic theory with the forgetful functor $U\colon\Alg(\Omega,E)\to\bS\-\PMod$.
    Then, for every replete full subcategory $\E\subseteq\Alg(\Omega,E)$, the following are equivalent:
    \begin{enumerate}
        \item
        $\E$ is definable by $\bS$-relative judgments, i.e., 
        there exists a set $E'$ of $\bS$-relative judgments satisfying $\E=\Alg(\Omega,E+E')$.
        \item
        $\E\subseteq\Alg(\Omega,E)$ is closed under products, $\Sigma$-closed subobjects, and $U$-local retracts.
    \end{enumerate}
\end{corollary}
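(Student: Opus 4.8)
The plan is to obtain this corollary as an instance of \cref{thm:filcolim_elim}. Set $\bT:=\pht{\Omega}{E}$, the partial Horn theory over $\Sigma+\Omega$ associated with $(\Omega,E)$ in \cref{def:pht_for_rat}, so that $\Alg(\Omega,E)\cong\bT\-\PMod$. I would take $\rho\colon(S,\Sigma,\bS)\to(S,\Sigma+\Omega,\bT)$ to be the evident inclusion theory morphism: the identity on sorts, sending each function or relation symbol of $\Sigma$ to itself, now viewed as a symbol of $\Sigma+\Omega$. This is a theory morphism because $\rho$ fixes every $\Sigma$-symbol, so the $\rho$-translation of an axiom of $\bS$ is that axiom itself, which lies in $\bT\supseteq\bS$ and is thus a PHL-theorem of $\bT$. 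The induced functor $U^\rho\colon\bT\-\PMod\to\bS\-\PMod$ forgets the $\Omega$-operations and hence coincides, under the isomorphism $\Alg(\Omega,E)\cong\bT\-\PMod$, with the forgetful functor $U$.

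Since $\bS\-\PMod$ satisfies ACC, \cref{thm:filcolim_elim} applies to this $\rho$ and gives the equivalence of ``$\E$ is definable by $\rho$-relative judgments'' with ``$\E$ is closed under products, $\bT$-closed subobjects, and $U^\rho$-local retracts.'' It then remains to rephrase both conditions. For definability, since $\rho$ is the identity on $\Sigma$, a $\rho$-relative judgment $\phi^\rho\seq{\tup{x}^\rho}\psi$ is just a Horn sequent $\phi\seq{\tup{x}}\psi$ over $\Sigma+\Omega$ whose antecedent $\phi$ involves only $\Sigma$-symbols, that is, exactly an $\bS$-relative judgment; and for a set $E'$ of such judgments one has $\bT+E'=\pht{\Omega}{E+E'}$, whence $(\bT+E')\-\PMod\cong\Alg(\Omega,E+E')$. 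So the two definability statements coincide.

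For the closure conditions, $U^\rho$-local retracts are $U$-local retracts via the identification $U^\rho\cong U$, and products transfer across the isomorphism of categories; the only substantive point is that the $\bT$-closed (i.e.\ $(\Sigma+\Omega)$-closed) monomorphisms of $\bT\-\PMod$ are exactly the $\Sigma$-closed ones. One inclusion is immediate, as $(\Sigma+\Omega)$-closedness asks for the pullback condition over a larger family of symbols. For the converse I would use that in an $\Omega$-algebra each $\intpn{\omega}{\bA}$ is total on its arity, so $\mathrm{Dom}(\intpn{\omega}{\bA})=\intpn{\ar(\omega)}{A}$; hence the pullback square for $\omega$ along a monomorphism $h$ holds precisely when $h$ reflects the interpretation of the $\Sigma$-Horn formula $\ar(\omega)$.

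The main obstacle is thus the sub-lemma carrying this last reflection, which I expect to be the only genuinely non-formal step: every $\Sigma$-closed monomorphism $h\colon A\to B$ reflects the interpretation of each Horn-formula-in-context over $\Sigma$, i.e.\ $\intpn{\tup{x}.\phi}{A}=h^{-1}\intpn{\tup{x}.\phi}{B}$. This is a standard property of $\Sigma$-closed monomorphisms, proved by a routine structural induction: the pullback squares for function symbols give, by induction on terms, that $h$ reflects both definedness and values of $\Sigma$-terms; injectivity of $h$ then handles equality atoms, the pullback squares for relation symbols handle relational atoms, and $\top$ and $\wedge$ are immediate. Granting this, $\Sigma$-closed and $(\Sigma+\Omega)$-closed monomorphisms of $\Alg(\Omega,E)$ agree, the two closure conditions match, and the corollary drops out of \cref{thm:filcolim_elim}.
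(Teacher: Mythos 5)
Your proposal is correct and takes essentially the same approach as the paper: the paper obtains this corollary precisely by instantiating \cref{thm:filcolim_elim} at the inclusion theory morphism $\rho\colon(S,\Sigma,\bS)\to(S,\Sigma+\Omega,\pht{\Omega}{E})$, under which $U^\rho$ becomes $U$. The details you supply --- identifying $\rho$-relative judgments with $\bS$-relative judgments, noting $\pht{\Omega}{E}+E'=\pht{\Omega}{E+E'}$, and showing $(\Sigma+\Omega)$-closed monomorphisms coincide with $\Sigma$-closed ones because $\Sigma$-closed monomorphisms reflect the $\Sigma$-Horn formulas $\ar(\omega)$ --- are exactly the routine verifications the paper leaves implicit.
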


\begin{remark}\label{rem:locret_is_essential}
    The concept of local retracts is essential for filtered colimit elimination.
    In fact, in \cref{thm:filcolim_elim}, closure under products, $\bT$-closed subobjects, and $U^\rho$-retracts does not imply closure under filtered colimits.
    If that were true, \cref{cor:birkhoff_2} must hold where local retracts are replaced with retracts.
    Of course, it is false.
    To see this, consider a single-sorted partial Horn theory $\bS$ as follows:
    \begin{gather*}
        \Sigma:=\{e\colon\text{constant}\}\cup\{ u_n\colon\text{unary function} \}_{n\in\bN},\\
        \bS:=\{ \top\seq{()}u_n(e)=e \}_{n\in\bN}.
    \end{gather*}
    An $\bS$-model is a set $M$ equipped with a constant $\intpn{e}{M}\in M$ and countably many partial endomorphisms $\intpn{u_n}{M}\colon M\pto M$ whose value on the constant $\intpn{e}{M}$ is always defined to be $\intpn{e}{M}$.
    Let $\E\subseteq\bS\-\PMod$ be the full subcategory defined by the following infinitary sequent:
    \begin{equation*}
        \left( \bigwedge_{n\in\bN} u_n(x)=e \right) \seq{x} x=e.
    \end{equation*}
    We now claim that $\E\subseteq\bS\-\PMod$ is closed under products, $\Sigma$-closed subobjects, and retracts but not closed under filtered colimits.
    To show that $\E\subseteq\bS\-\PMod$ is not closed under filtered colimits, for each $n\in\bN$, define $A_n:=\{ 0, a\}\in\bS\-\PMod$ by the following:
    \begin{gather*}
        \intpn{e}{A_n}:=0, \\
        \intpn{u_k}{A_n}(a):=
        \begin{cases*}
            0 & if $k<n$, \\
            \text{undefined} & if $k\ge n$.
        \end{cases*}
    \end{gather*}
    Now, the identity map on $\{0,a\}$ yields an $\bbomega$-chain $A_0\to A_1\to A_2\to\cdots$ in $\bS\-\PMod$.
    Then, the colimit $A:=\Colim{n\in\bbomega}A_n$ is described as follows:
    \begin{gather*}
        A:=\{0, a\},\quad
        \intpn{e}{A}:=0,\quad
        \intpn{u_k}{A}(a):=0~(\forall k).
    \end{gather*}
    In particular, the filtered colimit $A$ does not belong to $\E$.
\end{remark}

\begin{example}[Finite-sorted algebras]
    Considering a partial Horn theory $(S,\varnothing,\varnothing)$ such that $S$ is finite, 
    we get the Birkhoff's theorem in finite sorts \cite{adamek2012birkhoffs} as a corollary of \cref{cor:birkhoff_3}.
    Let $(\Omega,E)$ be an $S$-sorted algebraic theory.
    Then, a replete full subcategory $\E\subseteq\Alg(\Omega,E)$ is definable by equations if and only if it is closed under products, subobjects, and surjections.
\end{example}

\begin{remark}\label{rem:ACC_cannot_be_removed}
    The assumption that $\bS\-\PMod$ satisfies ACC cannot be removed from \cref{thm:filcolim_elim} even if $\bS$ is finite-sorted.
    To show this, let $\bS$ be the single-sorted partial Horn theory for sets with countably many constants as follows:
    \begin{gather*}
        \Sigma:=\{ c_n\colon\text{constant} \}_{n\in\bN},\quad \bS:=\{ \top\seq{()}c_n\defined \}_{n\in\bN}.
    \end{gather*}
    Consider the full subcategory of $\bS\-\PMod$
    \[
    \E:=\{1\}\cup\{ M\in\bS\-\PMod \mid \exists i,j\text{~s.t.}\intpn{c_i}{M}\neq\intpn{c_j}{M} \},
    \]
    where $1$ is the terminal.
    A local retraction in the category $\bS\-\PMod$ is precisely a surjection that does not merge any constants.
    Thus, we can see that $\E\subseteq\bS\-\PMod$ is closed under products, $\Sigma$-closed subobjects, and local retracts.
    We now show that it is not closed under filtered colimits.
    For each $n\in\bN$, define $A_n:=\bN\cup\{\infty\}\in\bS\-\PMod$ by $\intpn{c_i}{A_n}:=\max(i-n,0)$.
    Let $f_n\colon A_n\to A_{n+1}$ be the morphism
    \begin{equation*}
        f_n(x):=
        \begin{cases*}
            \max(x-1,0) & if $x\neq\infty$, \\
            \infty & if $x=\infty$.
        \end{cases*}
    \end{equation*}
    Then, the filtered colimit $A_\infty$ of $A_0\arr{f_0}A_1\arr{f_1}\cdots$ consists of two different points, and all constants are merged there, hence $A_\infty\not\in\E$ even though $A_n\in\E$.
\end{remark}

\begin{example}[Ordered algebras]
    Considering the partial Horn theory $\bS_\pos$ for posets as in \cref{eg:pht_for_posets}, we get the Birkhoff's theorem for ordered algebras as a consequence of \cref{cor:birkhoff_3}.
    Let $(\Omega,E)$ be an $\bS_\pos$-relative algebraic theory and let $U\colon\Alg(\Omega,E)\to\Pos$ be the forgetful functor.
    Then, a replete full subcategory $\E\subseteq\Alg(\Omega,E)$ is definable by $\bS_\pos$-relative judgments if and only if it is closed under products, $\Sigma_\pos$-closed subobjects, and $U$-local retracts.

    However, there is another version of the Birkhoff's theorem for ordered algebras.
    Given an $\bS_\pos$-relative algebraic theory $(\Omega,E)$, we have the associated partial Horn theory $\pht{\Omega}{E}$ as in \cref{def:pht_for_rat}.
    Applying \cref{cor:birkhoff_1} to $\pht{\Omega}{E}$, we get the following theorem:
    A replete full subcategory $\E\subseteq\Alg(\Omega,E)$ is definable by inequalities (Horn formulas over $\Sigma_\pos+\Omega$) if and only if it is closed under products, $\Sigma_\pos$-closed subobjects, and surjections.
    Note that a $\pht{\Omega}{E}$-closed subobject is simply an $\Sigma_\pos$-closed subobject.
    This theorem is a direct generalization of a classical Birkhoff's theorem for ordered algebras in \cite{bloom1976varieties}.
    This is because a morphism $f\colon\bA\to\bB$ between ordered algebras is surjective if and only if $f$ can be written as a quotient $\bA\to\bA/\theta$ of a congruence $\theta$ determined by an \emph{admissible preorder} on $\bA$ in the sense of \cite{bloom1976varieties}.
\end{example}

By filtered colimit elimination, we can reformulate Birkhoff's theorem for relative algebras in the HSP-type form (\cref{cor:HSP-type_formalization}).

\begin{notation}
    Let $\rho\colon \bS\to\bT$ be a theory morphism between partial Horn theories.
    For each replete full subcategory $\E\subseteq\bT\-\PMod$, let $\cloprod(\E)\subseteq\bT\-\PMod$ denote the full subcategory consisting of all products of objects in $\E$.
    Similarly, let $\clocsub{\bT}(\E)$ denote the full subcategory consisting of all $\bT$-closed subobjects of objects in $\E$, and let $\clolret{\rho}(\E)$ denote the full subcategory consisting of all $U^\rho$-local retracts of objects in $\E$.
\end{notation}

\begin{lemma}\label{lem:HSP_operator}
    Let $\rho\colon \bS\to\bT$ be a theory morphism between partial Horn theories and let $\E\subseteq\bT\-\PMod$ be a replete full subcategory.
    \begin{enumerate}
        \item\label{lem:HSP_operator-1}
        The closure operators $\cloprod$, $\clocsub{\bT}$, and $\clolret{\rho}$ are idempotent, i.e.,
        \begin{gather*}
            \cloprod\cloprod(\E)=\cloprod(\E),\quad
            \clocsub{\bT}\clocsub{\bT}(\E)=\clocsub{\bT}(\E),\quad
            \clolret{\rho}\clolret{\rho}(\E)=\clolret{\rho}(\E).
        \end{gather*}
        \item\label{lem:HSP_operator-2}
        $\cloprod\clolret{\rho}(\E)\subseteq\clolret{\rho}\cloprod(\E)$ holds.
        \item\label{lem:HSP_operator-3}
        $\cloprod\clocsub{\bT}(\E)\subseteq\clocsub{\bT}\cloprod(\E)$ holds.
        \item\label{lem:HSP_operator-4}
        $\clocsub{\bT}\clolret{\rho}(\E)\subseteq\clolret{\rho}\clocsub{\bT}(\E)$ holds.
    \end{enumerate}
    In particular, $\clolret{\rho}\clocsub{\bT}\cloprod(\E)\subseteq\bT\-\PMod$ is the smallest replete full subcategory containing $\E$ which is closed under products, $\bT$-closed subobjects, and $U^\rho$-local retracts.
\end{lemma}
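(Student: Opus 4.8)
The plan is to treat this as the classical Birkhoff manipulation of closure operators, so the whole lemma reduces to a few commutation laws plus idempotence, from which the final clause is purely formal. Throughout I would use two structural facts, both immediate: each of $\cloprod$, $\clocsub{\bT}$, $\clolret{\rho}$ is extensive ($\E\subseteq\mathrm{op}(\E)$) and monotone (preserves inclusions), and the forgetful functor $U^\rho$ preserves all limits since it has the left adjoint $F^\rho$. For the idempotence in part \ref{lem:HSP_operator-1}, I would argue: $\cloprod\cloprod(\E)=\cloprod(\E)$ by flattening an indexed product of indexed products into a single product; $\clocsub{\bT}\clocsub{\bT}(\E)=\clocsub{\bT}(\E)$ because a composite of two $\bT$-closed monomorphisms is again $\bT$-closed (paste the defining pullback squares via the pasting lemma); and $\clolret{\rho}\clolret{\rho}(\E)=\clolret{\rho}(\E)$ because a composite of $U^\rho$-local retractions is a $U^\rho$-local retraction, using that $U^\rho$ preserves composition together with \cref{lem:basic_property_locret} (composites of local retractions are local retractions).

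For the two product-commutation laws I would reason componentwise. In part \ref{lem:HSP_operator-2}, if each $B_i$ is a $U^\rho$-local retract of $A_i\in\E$ via $p_i$, then $\prod_i p_i\colon\prod_i A_i\to\prod_i B_i$ is a $U^\rho$-local retraction: $U^\rho$ preserves products and a product of local retractions is a local retraction (lift each component separately and tuple the lifts), so $\prod_i B_i\in\clolret{\rho}\cloprod(\E)$. In part \ref{lem:HSP_operator-3}, a product of $\bT$-closed monomorphisms is $\bT$-closed, since a product of pullback squares is a pullback square, whence $\prod_i B_i$ is a $\bT$-closed subobject of $\prod_i A_i\in\cloprod(\E)$.

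The main obstacle is the mixed law \ref{lem:HSP_operator-4}, which requires a genuine construction rather than a rearrangement; the idea is to \emph{pull the subobject back along the retraction}. Let $C$ be a $\bT$-closed subobject of $B$ via $m\colon C\hookrightarrow B$, where $B$ is a $U^\rho$-local retract of some $A\in\E$ via $p\colon A\to B$, and form the pullback in $\bT\-\PMod$:
\begin{equation*}
\begin{tikzcd}
    A'\arrow[r]\arrow[d,"p'"']\arrow[rd,pos=0.1,phantom,"\lrcorner"] &[-5pt] A\arrow[d,"p"]\\
    C\arrow[r,"m",hook] & B
\end{tikzcd}
\end{equation*}
Then $A'\hookrightarrow A$ is $\bT$-closed: by the cited \cite[Theorem 4.14]{kawase2023birkhoffs} the $\bT$-closed monomorphisms form a right orthogonal class and are therefore stable under pullback, so $A'\in\clocsub{\bT}(\E)$. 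Moreover $U^\rho$ preserves this pullback, and $U^\rho p$ is a local retraction, so by \cref{lem:basic_property_locret}\ref{lem:basic_property_locret-4} (stability of local retractions under pullback) $U^\rho p'$ is a local retraction; thus $p'\colon A'\to C$ is a $U^\rho$-local retraction, giving $C\in\clolret{\rho}\clocsub{\bT}(\E)$.

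Finally, the ``in particular'' clause follows formally. Write $\mathcal F:=\clolret{\rho}\clocsub{\bT}\cloprod(\E)$. Extensivity gives $\E\subseteq\mathcal F$. Closure of $\mathcal F$ under $U^\rho$-local retracts is idempotence of $\clolret{\rho}$; closure under $\bT$-closed subobjects follows by pushing $\clocsub{\bT}$ past $\clolret{\rho}$ using \ref{lem:HSP_operator-4} and then collapsing $\clocsub{\bT}\clocsub{\bT}=\clocsub{\bT}$; and closure under products follows by pushing $\cloprod$ inward via \ref{lem:HSP_operator-2} then \ref{lem:HSP_operator-3} and collapsing $\cloprod\cloprod=\cloprod$, where each push uses monotonicity of the outer operators. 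Minimality is immediate: any replete full $\mathcal F'\supseteq\E$ closed under the three operations satisfies $\cloprod(\E)\subseteq\mathcal F'$, hence $\clocsub{\bT}\cloprod(\E)\subseteq\mathcal F'$, hence $\mathcal F\subseteq\mathcal F'$. I expect parts \ref{lem:HSP_operator-1}--\ref{lem:HSP_operator-3} to be routine and part \ref{lem:HSP_operator-4} to carry all the content.
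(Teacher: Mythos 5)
Your proof is correct and takes essentially the same route as the paper's: parts (i)--(iii) via products of ($U^\rho$-)local retractions and of $\bT$-closed monomorphisms, part (iv) by pulling the $\bT$-closed subobject back along the local retraction and invoking pullback-stability of both classes (exactly the paper's one-line argument, spelled out), and the final clause by the same formal manipulation of closure operators. The only cosmetic difference is that for (iii) you argue directly that a product of pullback squares is a pullback, whereas the paper cites the orthogonal factorization system of \cite[Theorem 4.14]{kawase2023birkhoffs}; both justifications are fine.
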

\begin{proof}
    \ref{lem:HSP_operator-1} is trivial.
    \ref{lem:HSP_operator-2} follows from the fact that a product of $U^\rho$-local retractions is again a $U^\rho$-local retraction.
    Since the class of all $\bT$-closed monomorphisms is the right class of an orthogonal factorization system \cite[Theorem 4.14]{kawase2023birkhoffs}, a product of $\bT$-closed monomorphisms is again a $\bT$-closed monomorphism.
    Thus, \ref{lem:HSP_operator-3} follows.
    Finally, \ref{lem:HSP_operator-4} follows from the fact that $\bT$-closed monomorphisms and local retractions are stable under pullback.
\end{proof}

\begin{corollary}\label{cor:HSP-type_formalization}
    Let $\rho\colon\bS\to\bT$ be a theory morphism between partial Horn theories and assume that the category $\bS\-\PMod$ satisfies ACC.
    Then, a replete full subcategory $\E\subseteq\bT\-\PMod$ is definable by $\rho$-relative judgments if and only if $\clolret{\rho}\clocsub{\bT}\cloprod(\E)=\E$ holds.
\end{corollary}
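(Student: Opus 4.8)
The plan is to obtain \cref{cor:HSP-type_formalization} purely formally from the equivalence of \ref{thm:filcolim_elim-1} and \ref{thm:filcolim_elim-2} in \cref{thm:filcolim_elim}, together with the final clause of \cref{lem:HSP_operator}, which identifies $\clolret{\rho}\clocsub{\bT}\cloprod(\E)$ as the smallest replete full subcategory containing $\E$ that is closed under products, $\bT$-closed subobjects, and $U^\rho$-local retracts. Before splitting into the two implications, I would record the expansiveness inclusion $\E\subseteq\clolret{\rho}\clocsub{\bT}\cloprod(\E)$, valid for every $\E$: any object is a one-factor product of itself, is a $\bT$-closed subobject of itself via its identity monomorphism, and is its own $U^\rho$-local retract since identities are local retractions (immediately from the definition, or by \cref{lem:basic_property_locret}\ref{lem:basic_property_locret-2}). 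Chaining these gives $\E\subseteq\cloprod(\E)\subseteq\clocsub{\bT}\cloprod(\E)\subseteq\clolret{\rho}\clocsub{\bT}\cloprod(\E)$.

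For the forward direction, suppose $\E$ is definable by $\rho$-relative judgments. Then \cref{thm:filcolim_elim} (\ref{thm:filcolim_elim-1}$\implies$\ref{thm:filcolim_elim-2}) shows that $\E$ is itself closed under products, $\bT$-closed subobjects, and $U^\rho$-local retracts. Since $\clolret{\rho}\clocsub{\bT}\cloprod(\E)$ is, by \cref{lem:HSP_operator}, the least such closed subcategory containing $\E$, minimality forces $\clolret{\rho}\clocsub{\bT}\cloprod(\E)\subseteq\E$; combined with the expansiveness inclusion above, this yields the desired equality. For the converse, suppose $\clolret{\rho}\clocsub{\bT}\cloprod(\E)=\E$. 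Then $\E$ coincides with a subcategory that, again by \cref{lem:HSP_operator}, is closed under products, $\bT$-closed subobjects, and $U^\rho$-local retracts, so \cref{thm:filcolim_elim} (\ref{thm:filcolim_elim-2}$\implies$\ref{thm:filcolim_elim-1}) applies and exhibits $\E$ as definable by $\rho$-relative judgments.

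I expect no genuine obstacle, since all the mathematical content has already been carried out in \cref{thm:filcolim_elim} and \cref{lem:HSP_operator}. The only point demanding care is that the minimality/closure statement used in both directions is precisely the concluding sentence of \cref{lem:HSP_operator}, which itself rests on the idempotency \ref{lem:HSP_operator-1} and the three commutation inequalities \ref{lem:HSP_operator-2}--\ref{lem:HSP_operator-4}; I would simply invoke it rather than reprove it. It is worth noting that the ACC hypothesis on $\bS\-\PMod$ is used only insofar as it is a standing assumption of \cref{thm:filcolim_elim}, and plays no role in the formal manipulation of the closure operators.
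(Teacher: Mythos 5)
Your proof is correct and is exactly the intended argument: the paper states \cref{cor:HSP-type_formalization} without a written proof precisely because it follows formally from the equivalence \ref{thm:filcolim_elim-1}$\iff$\ref{thm:filcolim_elim-2} of \cref{thm:filcolim_elim} together with the concluding minimality statement of \cref{lem:HSP_operator}, just as you argue. Your explicit note of the expansiveness inclusion $\E\subseteq\clolret{\rho}\clocsub{\bT}\cloprod(\E)$ is a correct and harmless addition that the paper leaves implicit.
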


We now give a weak converse of filtered colimit elimination.

\begin{definition}
    We say a partial Horn theory $\bS$ admits \emph{filtered colimit elimination} if \cref{thm:filcolim_elim} holds, i.e., 
    for every theory morphism $\rho\colon \bS\to\bT$, a replete full subcategory $\E\subseteq\bT\-\PMod$ is closed under filtered colimits whenever it is closed under products, $\bT$-closed subobjects, and $U^\rho$-local retracts.
\end{definition}

\begin{theorem}\label{thm:converse_fil_colim_elim}
    Let $\bS$ be a partial Horn theory that admits filtered colimit elimination.
    \begin{enumerate}
        \item\label{thm:converse_fil_colim_elim-1}
        The full subcategory $\fpconn{\bS\-\PMod}\subseteq\bS\-\PMod$ consisting of all finitely presentable connected objects satisfies the ascending chain condition (see \cref{def:connected_object}).
        \item\label{thm:converse_fil_colim_elim-2}
        Assume that the morphism from the initial to the terminal is a strong monomorphism in $\bS\-\PMod$.
        Equivalently, assume that every non-initial object has at least two parallel morphisms from itself.
        Then, the full subcategory $\fp{\bS\-\PMod}\subseteq\bS\-\PMod$ consisting of all finitely presentable objects satisfies the ascending chain condition.
    \end{enumerate}
\end{theorem}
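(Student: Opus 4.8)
The plan is to argue by contraposition, uniformly for both parts, exhibiting a would-be-definable class that fails to be closed under filtered colimits unless the relevant ascending chain condition holds. Suppose the relevant full subcategory fails ACC; by \cref{def:ACC_for_cat} this produces a strictly ascending sequence in the posetification, i.e.\ finitely presentable objects $X_0,X_1,X_2,\dots$ (connected ones in the case \ref{thm:converse_fil_colim_elim-1}) together with morphisms $X_0\to X_1\to\cdots$ such that there is no morphism $X_m\to X_n$ whenever $m>n$ (otherwise $[X_m]\le[X_n]<[X_m]$). Let $X_\infty:=\Colim{n}X_n$ be the resulting filtered colimit. The decisive observation is that there is no morphism $X_\infty\to X_n$ for any $n$: precomposing such a morphism with the coprojection $X_{n+1}\to X_\infty$ would yield a morphism $X_{n+1}\to X_n$, which is impossible.

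Next I would instantiate the hypothesis that $\bS$ admits filtered colimit elimination at the identity theory morphism $\rho=\id\colon\bS\to\bS$, so that $U^\rho$-local retracts are simply local retracts in $\bS\-\PMod$. I take $\E$ to consist of the terminal object $1$ together with all $M$ for which there is no morphism $X_n\to M$ for at least one $n$; the terminal must be adjoined because closure under products includes the empty product. Each $X_n$ lies in $\E$ (there is no morphism $X_{n+1}\to X_n$), whereas $X_\infty$ receives every $X_n$ through its coprojections and hence lies in $\E$ only if $X_\infty\cong 1$. Thus, provided $X_\infty\not\cong 1$, the filtered colimit $X_\infty=\Colim{n}X_n$ of objects of $\E$ escapes $\E$, contradicting filtered colimit elimination.

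The remaining work is to verify that $\E$ is closed under products, $\bS$-closed subobjects and local retracts. For a morphism $p\colon P\to Q$ and a finitely presentable $\Gamma$, the lifting property in the definition of a local retraction shows that $\Gamma$ maps into $Q$ if and only if it maps into $P$; hence ``admitting no morphism from $X_n$'' is preserved along local retractions. Moreover a local retract of $1$ is again terminal: for finitely presentable $\Gamma$ every $\Gamma\to Q$ lifts through the point $1\to Q$ and so equals its composite with $1\to Q$, whence $\bS\-\PMod(\Gamma,Q)$ is a singleton, and since $\bS\-\PMod(-,Q)$ turns the canonical presentation of an object as a filtered colimit of finitely presentable ones into a limit, $\bS\-\PMod(Z,Q)$ is a singleton for every $Z$. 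Closure under products is immediate from $\bS\-\PMod(X_n,\prod_i M_i)=\prod_i \bS\-\PMod(X_n,M_i)$ (the empty product being covered by $1\in\E$), and closure under $\bS$-closed subobjects is immediate because a subobject only shrinks the class of objects mapping into it. These verifications use only that the $X_n$ are finitely presentable.

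The main obstacle, and the only place the hypotheses of the two parts are actually used, is the behaviour of the terminal object. Two points must be secured: that $X_\infty\not\cong 1$, and that every $\bS$-closed subobject $A\hookrightarrow 1$ lies in $\E$ — equivalently, that no proper subterminal object admits morphisms from the whole sequence $(X_n)_n$. For \ref{thm:converse_fil_colim_elim-1} I expect to exploit connectedness of the $X_n$ (\cref{def:connected_object}) to forbid their factoring through a proper subterminal object and to separate $X_\infty$ from $1$. For \ref{thm:converse_fil_colim_elim-2} I would instead invoke the standing assumption that $0\to 1$ is a strong monomorphism, in its equivalent form that every non-initial object carries two distinct parallel morphisms out of itself; this pins down the subterminal objects — a proper subterminal receiving all $X_n$ would be forced to be initial, contradicting $X_0\ne 0$ — and likewise excludes $X_\infty\cong 1$. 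Once these terminal-object subtleties are dispatched, the contradiction with filtered colimit elimination is complete.
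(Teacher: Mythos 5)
Your reduction to a strictly ascending chain, the observation that there is no morphism $X_\infty\to X_n$, and the closure of your $\E$ under products and local retracts (including the fact that a local retract of $1$ is terminal) are all correct. But the two points you defer --- (a) $X_\infty\not\cong 1$, and (b) every $\bS$-closed subobject of $1$ lies in $\E$ --- are not merely technical: both are \emph{false} under the hypotheses you plan to use, so the proof cannot be completed along these lines. For (a), take $\bS\-\PMod\simeq\Set^{\bbomega^\op}$ and the chain $L_1\to L_2\to\cdots$ from \cref{eg:srg_conn}, where $L_i(n)=1$ for $n<i$ and $L_i(n)=\varnothing$ otherwise. These are representable, hence finitely presentable and connected; the chain is strictly ascending; and $\Colim{i}L_i$ is exactly the terminal presheaf. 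Moreover $\Set^{\bbomega^\op}$ is a topos, so $0\to 1$ is a strong monomorphism and every non-initial object admits two distinct parallel morphisms out of itself (its two coprojections into its self-coproduct); hence neither connectedness nor the hypothesis of part (ii) excludes $X_\infty\cong 1$, and in this situation your $\E$ contains the colimit and no contradiction is produced. For (b), pass to $\bS\-\PMod\simeq\Set^{\bbomega^\op}\times\Set$ (still a topos, with every monomorphism $\bS$-closed for the evident presentation) and the chain $(L_i,\varnothing)$, which again consists of connected finitely presentable objects and is strictly ascending; the \emph{proper} closed subterminal $(1,\varnothing)$ receives a morphism from every $(L_i,\varnothing)$, so it is an $\bS$-closed subobject of $1\in\E$ that does not lie in $\E$, and your class fails closure under $\bS$-closed subobjects. (These categories of course fail filtered colimit elimination --- that is what the theorem asserts --- but your argument must \emph{derive} the contradiction from the chain, and claims (a) and (b) simply do not follow from strict ascent together with connectedness or with the strong-monomorphism hypothesis.)

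The paper's proof is built precisely to avoid these two obstructions, by two devices absent from your proposal. First, instead of asking the colimit $A=\Colim{n}A_n$ itself to escape the class, it uses the self-coproduct $A+A\cong\Colim{n}(A_n+A_n)$: if $A$ is initial the chain is already eventually strongly connected, and otherwise $A+A$ is provably \emph{not subterminal} under either hypothesis (connectedness of $A$ gives $\A(A,A+A)\cong\A(A,A)+\A(A,A)$, which has at least two elements; the hypothesis of part (ii) makes the two coprojections distinct). Second, it takes $\E=\{X\mid\text{there is a morphism }X\to A_n\text{ for some }n\}$, the down-closure of the chain, and does not require $\E$ itself to be closed under the three operations: filtered colimit elimination is applied to the closure $\clolret{\rho}\clocsub{\bS}\cloprod(\E)$, which is closed by \cref{lem:HSP_operator} and which is computed to equal $\clocsub{\bS}(1)\cup\clolret{\rho}(\E)$, so all problematic subterminal objects are absorbed into the term $\clocsub{\bS}(1)$. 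Non-subterminality of $A+A$ then forces $A+A\in\clolret{\rho}(\E)$, i.e., there is a local retraction $q\colon Y\to A+A$ with a morphism $Y\to A_N$; lifting $A_n\to A\to A+A$ through $q$ (each $A_n$ being finitely presentable) yields morphisms $A_n\to A_N$ for all $n\ge N$, so the tail of the chain is strongly connected. Without these two devices --- the self-coproduct, and applying elimination to the closure of a down-closed class rather than to a hand-built invariant class --- the terminal-object difficulties you flagged are fatal to the argument.
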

\begin{proof}
    The proofs of both \ref{thm:converse_fil_colim_elim-1} and \ref{thm:converse_fil_colim_elim-2} can be discussed in parallel.
    Let $\A:=\bS\-\PMod$ and take an $\bbomega$-chain $A_0\to A_1\to A_2\to\cdots$ of finitely presentable objects in $\A$.
    We have to show that there exists $N\in\bN$ such that $(A_n)_{n\ge N}$ are strongly connected to each other.
    To show this, take the colimit $A:=\Colim{n\in\bbomega}A_n$ of the $\bbomega$-chain.
    If $A$ is initial, it turns out that all $A_n~(n\in\bN)$ are strongly connected to each other.
    Thus, in what follows, we assume that $A$ is not initial.

    We now claim that the coproduct $A+A$ is not a subterminal in both cases \ref{thm:converse_fil_colim_elim-1} and \ref{thm:converse_fil_colim_elim-2}.
    In the case of \ref{thm:converse_fil_colim_elim-1}, we can additionally assume that all $A_n$ are connected.
    By \cref{prop:conn_colim_of_conn_obj}, $A$ is connected.
    Then, $\A(A,A+A)\cong\A(A,A)+\A(A,A)$ implies that there are at least two morphisms from $A$ to $A+A$, hence $A+A$ is not a subterminal.
    We now turn to the case of \ref{thm:converse_fil_colim_elim-2}.
    Since $A$ is not initial, by assumption, there are different parallel morphisms $f,g\colon A\to A'$ for some $A'$.
    Considering post-compositions with the canonical morphism $(f,g)\colon A+A\to A'$, we can see that the two coprojections $A\rightarrow A+A\leftarrow A$ do not coincide, hence $A+A$ is not a subterminal.

    Consider the following replete full subcategory $\E\subseteq\A$:
    \begin{equation*}
        \E:=\{ X \mid \text{There is a morphism }X\to A_n\text{ for some }n\in\bN \}.
    \end{equation*}
    Let $\rho\colon\bS\to\bS$ be the identity theory morphism.
    Now, we have the following:
    \begin{gather}\label{eq:HSP_E}
        \clolret{\rho}\clocsub{\bS}\cloprod(\E)
        =\clolret{\rho}\clocsub{\bS}(\{1\}\cup\E)
        =\clolret{\rho}(\clocsub{\bS}(1)\cup\E)
        =\clolret{\rho}\clocsub{\bS}(1) \cup \clolret{\rho}(\E)
        =\clocsub{\bS}(1) \cup \clolret{\rho}(\E).
    \end{gather}
    We now explain the last equality $\clolret{\rho}\clocsub{\bS}(1)=\clocsub{\bS}(1)$ in \cref{eq:HSP_E}.
    If $B\in\clolret{\rho}\clocsub{\bS}(1)$, we get an $\bS$-closed monomorphism $m\colon X\hookrightarrow 1$ to the terminal and a local retraction $p\colon X\to B$.
    By \cref{prop:locret_implies_reg.epi}, $p$ is a strong epimorphism.
    Thus, the following square has a unique diagonal filler:
    \begin{equation*}
        \begin{tikzcd}
            X\arrow[d,"p"]\arrow[r,equal] & X\arrow[d,hook,"m"] \\
            B\arrow[ur,dashed]\arrow[r,"!"'] & 1
        \end{tikzcd}\incat{\A}.
    \end{equation*}
    Therefore, $p$ is an isomorphism, in particular, $B$ is an $\bS$-closed subobject of the terminal.
    
    Since $\bS$ admits filtered colimit elimination, the filtered colimit $A+A\cong\Colim{n\in\bbomega}(A_n+A_n)$ belongs to $\clolret{\rho}\clocsub{\bS}\cloprod(\E)$.
    We already know that $A+A$ is not a subterminal, hence \cref{eq:HSP_E} implies $A+A\in\clolret{\rho}(\E)$.
    Thus, we get a local retraction $q\colon Y\to A+A$ and a morphism $Y\to A_N$ for some $N$.
    Fix $n\ge N$.
    Since $A_n$ is finitely presentable and $q$ is a local retraction, there is a morphism $A_n\to Y$:
    \begin{equation*}
        \begin{tikzcd}
            && Y\arrow[d,"q"]\arrow[r] & A_N \\
            A_n\arrow[urr,dashed]\arrow[r,"i"']& A\arrow[r,"j"'] & A+A &
        \end{tikzcd}\quad\quad\incat{\A},
    \end{equation*}
    where $i$ and $j$ are coprojections.
    We now get a morphism $A_n\to A_N$, hence $(A_n)_{n\ge N}$ are strongly connected to each other.
    This completes the proof.
\end{proof}

\begin{remark}
    At least, \cref{thm:converse_fil_colim_elim}\ref{thm:converse_fil_colim_elim-1} is not the true converse of \cref{thm:filcolim_elim}.
    Indeed, the partial Horn theory $\bS$ of \cref{rem:ACC_cannot_be_removed} does not admit filtered colimit elimination, but it satisfies ACC for connected objects since $A_0$ in \cref{rem:ACC_cannot_be_removed} is the unique connected object.
    On the other hand, we do not know any example of a partial Horn theory that does not admit filtered colimit elimination and that satisfies ACC for finitely presentable objects.
\end{remark}

\section{Computation of strongly connected components}\label{section6}
In \cref{subsection6.1}, we compute strongly connected components in \emph{locally connected categories}, which include all presheaf categories.
In \cref{subsection6.2}, we restrict our attention to more special case, categories of topological group actions on a set.

\subsection{Locally connected categories}\label{subsection6.1}

\begin{definition}\label{def:connected_object}
    An object $C\in\C$ in a locally small category $\C$ is \emph{connected} if its hom-functor $\C(C,\bullet)\colon\C\to\Set$ preserves small coproducts.
    Equivalently, an object $C$ is connected if every morphism $C\to\coprod_{i}X_i$ into a small coproduct factors uniquely through a unique coprojection $X_i\to X$.
\end{definition}

\begin{proposition}\label{prop:conn_colim_of_conn_obj}
    Every connected colimit of connected objects is a connected object again.
\end{proposition}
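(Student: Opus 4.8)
The plan is to verify directly that the hom-functor $\C(C,-)$ preserves small coproducts, where $C=\Colim{j\in\bJ}D(j)$ is the colimit of a diagram $D\colon\bJ\to\C$ over a connected category $\bJ$ all of whose values $D(j)$ are connected. Write $\mu_j\colon D(j)\to C$ for the colimit coprojections and, for a small family $(X_i)_{i\in I}$, write $\kappa_i\colon X_i\to\coprod_{i\in I}X_i$ for the coproduct coprojections. It then suffices to show that the canonical comparison map $\coprod_{i\in I}\C(C,X_i)\to\C(C,\coprod_{i\in I}X_i)$ is a bijection; conceptually this is the statement that connected limits commute with coproducts in $\Set$, applied to the functor $j\mapsto\C(D(j),-)$.

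First I would fix a morphism $f\colon C\to\coprod_{i\in I}X_i$ and restrict it along each coprojection. Since each $D(j)$ is connected, the composite $f\mu_j$ factors uniquely as $f\mu_j=\kappa_{i(j)}g_j$ for a unique index $i(j)\in I$ and a unique $g_j\colon D(j)\to X_{i(j)}$. The key step is to show that $i(j)$ is independent of $j$: for a morphism $\phi\colon j\to j'$ in $\bJ$ the cocone identity $\mu_{j'}\circ D\phi=\mu_j$ gives $\kappa_{i(j')}(g_{j'}\circ D\phi)=f\mu_j=\kappa_{i(j)}g_j$; by naturality of the decomposition $\C(D(j),\coprod_i X_i)\cong\coprod_i\C(D(j),X_i)$ in its first variable (equivalently, by the uniqueness clause in the connectedness of $D(j)$), this forces $i(j')=i(j)$ and $g_{j'}\circ D\phi=g_j$. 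Hence $j\mapsto i(j)$ is constant along every morphism of $\bJ$, and since $\bJ$ is connected (nonempty, with any two objects linked by a zigzag) it is globally constant, say equal to $i_0$.

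With the index pinned down, the maps $g_j\colon D(j)\to X_{i_0}$ form a cocone over $D$, so the universal property of the colimit yields a unique $g\colon C\to X_{i_0}$ with $g\mu_j=g_j$ for all $j$; then $\kappa_{i_0}g$ and $f$ agree after precomposition with every $\mu_j$, whence $\kappa_{i_0}g=f$. This gives surjectivity of the comparison map. For injectivity I would run the same bookkeeping in reverse: any factorization $f=\kappa_{i'}g'$ restricts to $f\mu_j=\kappa_{i'}(g'\mu_j)$, so uniqueness at each $D(j)$ forces $i'=i_0$ and $g'\mu_j=g_j$, and then $g'=g$ by the uniqueness in the colimit's universal property.

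The only real subtlety — and the step I expect to be the crux — is the passage from ``$i(j)$ is locally constant'' to ``$i(j)$ is globally constant,'' which is precisely where connectedness of the diagram is used, together with its nonemptiness (needed both to produce $i_0$ and to extract the common index in the injectivity argument); everything else is a routine chase through the colimit cocone and the coproduct-preservation of each $D(j)$. It is worth flagging that nonemptiness of $\bJ$ is genuinely essential here, matching the convention that a connected category is in particular nonempty.
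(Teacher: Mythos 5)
Your proof is correct and follows essentially the same route as the paper: the paper writes $\C(C,X)\cong\Lim{I\in\bI^\op}\C(C_I,X)$ and then invokes, as a known fact, that coproducts commute with connected limits in $\Set$, which is exactly the interchange you verify by hand via the locally-constant-index argument. The only difference is that you unfold and prove that commutation lemma inline (correctly flagging the essential role of nonemptiness of the indexing category), whereas the paper cites it as a black box.
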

\begin{proof}
    Let $C:=\Colim{I\in\bI}C_I$ be a connected colimit in a category $\C$ of connected objects, i.e., $\bI$ is a connected category and all $C_I\in\C$ are connected.
    Let $X=\coprod_{J}X_J$ be a small coproduct in $\C$.
    Since coproducts commute with connected limits in $\Set$, we have the following:
    \begin{equation*}
        \C(C,X)=\Lim{I\in\bI^\op}\C(C_I,X)
        \cong\Lim{I\in\bI^\op}\coprod_J \C(C_I,X_J)
        \cong\coprod_J \Lim{I\in\bI^\op}\C(C_I,X_J)
        \cong\coprod_J \C(C,X_J).
    \end{equation*}
    This proves that $C$ is connected.
\end{proof}

\begin{definition}
    A category $\C$ is \emph{locally connected} if it has small coproducts and every object is a small coproduct of connected objects.
\end{definition}

\begin{remark}
    An object $C\in\E$ in a topos $\E$ is connected if and only if its hom-functor $\E(C,\bullet)\colon\E\to\Set$ preserves binary coproducts. Therefore, a Grothendieck topos is locally connected if and only if it is a locally connected topos in the usual sense, which is also called a molecular topos in \cite{barrpare1980molecular}.
    In particular, every presheaf category is locally connected.
\end{remark}

\begin{notation}
    Let $\conn{\C}\subseteq\C$ denote the full subcategory consisting of all connected objects in $\C$.
\end{notation}

\begin{definition}[The category of families]
    Given a category $\A$, we define the \emph{category of families of $\A$}, denoted by $\Fam{\A}$:
    An object in $\Fam{\A}$ is a small family $(A_i)_{i\in I}$ of objects in $\A$, 
    and a morphism $(A_i)_{i\in I}\to (B_j)_{j\in J}$ is a map $f\colon I\to J$ together with a family \mbox{$(f_i\colon A_i\to B_{f(i)})_{i\in I}$} of morphisms in $\A$.
    Note that $\Fam{\A}$ is the free coproduct cocompletion of $\A$.
\end{definition}

The categories of families characterize locally connected categories:
\begin{theorem}[{\cite[Lemma 42]{carbonivitale1998}}]\label{thm:loc-conn-characterization}
    For every category $\C$, the following are equivalent:
    \begin{enumerate}
        \item\label{thm:loc-conn-characterization-1}
        $\C$ is locally connected.
        \item\label{thm:loc-conn-characterization-2}
        There exists a category $\A$ such that $\C\simeq\Fam{\A}$.
    \end{enumerate}
    The category $\A$ in the condititon \ref{thm:loc-conn-characterization-2} can be chosen as $\conn{\C}$.
\end{theorem}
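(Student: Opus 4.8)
The plan is to prove the two implications separately, constructing in the hard direction an explicit comparison functor and showing it is an equivalence, while the easy direction only requires locating the connected objects inside a category of families. Throughout I will use that local connectedness bundles together two ingredients — existence of small coproducts and the fact that connected objects detect the summands of a coproduct — which is exactly what mediates between the two sides.

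For $\ref{thm:loc-conn-characterization-2}\Rightarrow\ref{thm:loc-conn-characterization-1}$, suppose $\C\simeq\Fam{\A}$. First I would describe coproducts in $\Fam{\A}$ concretely: the coproduct of a small family of objects is formed by taking the disjoint union of their indexing sets. Then I would show that each one-object family $(A)$ with $A\in\A$ is connected, since a morphism from $(A)$ into such a coproduct selects a single element of the disjoint union of the indexing sets, which lies in exactly one summand, so the morphism factors through a unique coprojection. Finally, every object $(A_i)_{i\in I}$ is visibly the coproduct $\coprod_{i\in I}(A_i)$ of one-object families, so $\Fam{\A}$ is locally connected.

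For $\ref{thm:loc-conn-characterization-1}\Rightarrow\ref{thm:loc-conn-characterization-2}$ with $\A:=\conn{\C}$, I would define the comparison functor $E\colon\Fam{\conn{\C}}\to\C$ sending a family $(C_i)_{i\in I}$ of connected objects to its coproduct $\coprod_{i\in I}C_i$ (which exists since $\C$ has small coproducts) and a morphism $(f,(f_i))$ to the induced map on coproducts. Essential surjectivity is immediate from the definition of local connectedness, which also yields the asserted choice $\A=\conn{\C}$. The heart of the argument is full faithfulness, which I would obtain from the chain of natural bijections
\begin{equation*}
    \C\Big(\coprod_i C_i,\ \coprod_j D_j\Big)
    \cong \prod_i \C\Big(C_i,\ \coprod_j D_j\Big)
    \cong \prod_i \coprod_j \C(C_i,D_j),
\end{equation*}
where the first isomorphism is the universal property of the coproduct and the second uses that each $C_i$ is connected. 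The right-hand side is exactly $\Fam{\conn{\C}}((C_i)_i,(D_j)_j)$, and I would check that this bijection is precisely the one induced by $E$.

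The main obstacle is this full faithfulness step, and within it the role of connectedness is decisive: faithfulness requires that a morphism $\coprod_i C_i\to\coprod_j D_j$ determine both the index assignment $i\mapsto j$ and the components $C_i\to D_j$, and this succeeds precisely because a morphism out of the connected object $C_i$ factors through a unique coprojection of the target coproduct. I would therefore isolate the second isomorphism above as the key lemma and verify its naturality carefully; the remaining bookkeeping — well-definedness of $E$ on composites and the matching of the two descriptions of morphisms of families — is routine.
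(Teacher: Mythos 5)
Your proposal is correct and follows essentially the same route as the paper: the same description of coproducts in $\Fam{\A}$ with single-indexed families as the connected objects, and the same comparison functor $\Fam{\conn{\C}}\to\C$, $(C_i)_{i\in I}\mapsto\coprod_i C_i$, whose full faithfulness comes from each component $C_i\to\coprod_j D_j$ factoring through a unique coprojection by connectedness. Your hom-set chain $\C(\coprod_i C_i,\coprod_j D_j)\cong\prod_i\coprod_j\C(C_i,D_j)$ is just a more explicit packaging of exactly that argument.
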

\begin{proof}
    {[\ref{thm:loc-conn-characterization-2}$\implies$\ref{thm:loc-conn-characterization-1}]}
    We claim that $\Fam{\A}$ has small coproducts.
    Indeed, for a family $\{ (A_{\lambda,i})_{i\in I_\lambda} \}_{\lambda\in\Lambda}$ of objects in $\Fam{\A}$, we can see that $(A_{\lambda,i})_{(\lambda,i)\in \coprod_{\lambda\in\Lambda}I_\lambda}$ is a coproduct of them.
    Furthermore, every single-indexed object $(A_i)_{i\in\{i\}}$ is connected in $\Fam{\A}$, 
    and every object in $\Fam{\A}$ is a coproduct of single-indexed objects, hence $\Fam{\A}$ is locally connected.

    {[\ref{thm:loc-conn-characterization-1}$\implies$\ref{thm:loc-conn-characterization-2}]}
    Since $\C$ has coproducts, we can define a functor $K\colon\Fam{\conn{\C}}\to\C$ by $(A_i)_{i\in I}\mapsto \coprod_{i\in I} A_i$.
    By definition of locally connected categories, the functor $K$ is essentially surjective.
    Moreover, $K$ is fully faithful.
    Indeed, given a morphism 
    \[
    K((A_i)_{i\in I})=\coprod_{i\in I}A_i \longarr{(f_i)} \coprod_{j\in J}B_j=K((B_j)_{j\in J}) \quad\quad\incat{\C},
    \]
    we can see that each component $f_i\colon A_i\to\coprod_{j\in J} B_j$ factors through a unique coprojection $B_j\to\coprod_{j\in J} B_j$, and this yields a morphism $(A_i)_{i\in I}\to (B_j)_{j\in J}$ in $\Fam{\conn{\C}}$.
\end{proof}

\begin{definition}
    A lower class $L\subseteq\A$ in a category $\A$ is \emph{essentially small} [resp. \emph{finitely generated}] if there is a small [resp. finite] set $S\subseteq\ob\A$ such that $\lowset{S} =L$, where $\lowset{S}$ is the smallest lower class containing $S$.
    Let $\eslowlat{\A}\subseteq\lowlat{\A}$ [resp. $\fglowlat{\A}\subseteq\lowlat{\A}$] denote the (large) subposet consisting of all essentially small [resp. finitely generated] lower classes.
\end{definition}

\begin{theorem}\label{thm:srg-conn-of-Fam}
    For a category $\A$, there are isomorphisms $\sigma(\Fam{\A})\cong\eslowlat{\A}\cong\eslowlat{\sigma(\A)}$ of (large) posets.
    In particular, by \cref{thm:loc-conn-characterization}, we get an isomorphism $\sigma(\C)\cong\eslowlat{\sigma(\conn{\C})}$ for every locally connected category $\C$.
\end{theorem}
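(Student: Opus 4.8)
The plan is to read strong connectedness in $\Fam{\A}$ off the lower class generated by the component objects of a family, and then transport the answer across the quotient $\A\to\sigma(\A)$. For an object $X=(A_i)_{i\in I}$ of $\Fam{\A}$, write $L_X:=\lowset{\{A_i \mid i\in I\}}$ for the smallest lower class in $\A$ containing all of its components; since $I$ is small, $L_X$ is essentially small by construction. The key first step is to show that a morphism $X\to Y$ exists in $\Fam{\A}$ if and only if $L_X\subseteq L_Y$. The forward direction is immediate, since a morphism consists of a reindexing $f\colon I\to J$ together with maps $A_i\to B_{f(i)}$, so each $A_i$ lies in $L_Y=\lowset{\{B_j \mid j\in J\}}$ and hence $L_X\subseteq L_Y$. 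Conversely, $L_X\subseteq L_Y$ says precisely that every $A_i$ admits a morphism to some $B_j$; choosing such a $j=:f(i)$ together with a witnessing map assembles a morphism $X\to Y$. In particular, $X$ and $Y$ are strongly connected exactly when $L_X=L_Y$.

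From this the first isomorphism $\sigma(\Fam{\A})\cong\eslowlat{\A}$ follows formally. The assignment $[X]\mapsto L_X$ is well defined on strongly connected components by the previous sentence, and the morphism characterization shows it is an order embedding: $[X]\le[Y]$ iff there is a morphism $X\to Y$ iff $L_X\subseteq L_Y$ (so it is in particular injective). It is surjective because any essentially small lower class $L=\lowset{S}$ with $S\subseteq\ob\A$ small is realized as $L=L_{X_S}$ via the family $X_S:=(s)_{s\in S}$ indexed by $S$ itself. Hence $[X]\mapsto L_X$ is an isomorphism of (large) posets.

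For the second isomorphism $\eslowlat{\A}\cong\eslowlat{\sigma(\A)}$, I would use that every lower class is automatically saturated under strong connectedness: if $X\in L$ and $Y$ is strongly connected to $X$, then the morphism $Y\to X$ forces $Y\in L$. Thus the lower classes of $\A$ are exactly the preimages under the quotient $\A\to\sigma(\A)$ of the lower classes of the poset $\sigma(\A)$, an inclusion-preserving bijection. This bijection restricts to the essentially small ones: $\lowset{S}$ corresponds to the lower set generated by the image $\{[s]\mid s\in S\}$, and conversely any small generating set of components lifts to a small set of representatives. Composing the two isomorphisms yields $\sigma(\Fam{\A})\cong\eslowlat{\A}\cong\eslowlat{\sigma(\A)}$. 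The final (``in particular'') clause then follows by taking $\A=\conn{\C}$: by \cref{thm:loc-conn-characterization} we have $\C\simeq\Fam{\conn{\C}}$, and since $\sigma$ is invariant under equivalences (\cref{prop:full-esssurj-srgconn}) we obtain $\sigma(\C)\cong\sigma(\Fam{\conn{\C}})\cong\eslowlat{\sigma(\conn{\C})}$.

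The main obstacle is not a single hard argument but the bookkeeping across the three passages $\A$, $\Fam{\A}$, $\sigma(\A)$. The points requiring genuine care are verifying that ``essentially small'' is preserved in \emph{both} directions of the $\eslowlat{\A}\cong\eslowlat{\sigma(\A)}$ bijection (lifting a small generating family of components back to representatives), and keeping the choices of witnessing morphisms — used to reconstruct a morphism $X\to Y$ from an inclusion $L_X\subseteq L_Y$ — under control in the large (proper-class) setting.
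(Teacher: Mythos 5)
Your proof is correct and takes essentially the same approach as the paper: the paper defines the very same assignment $(A_i)_{i\in I}\mapsto\lowset{\{A_i\mid i\in I\}}$ as a functor $K\colon\Fam{\A}\to\eslowlat{\A}$, observes that it is full and surjective on objects (which is exactly your morphism characterization and your surjectivity argument), and then invokes \cref{prop:full-esssurj-srgconn}, while dismissing the second isomorphism $\eslowlat{\A}\cong\eslowlat{\sigma(\A)}$ as trivial. The only difference is packaging: you verify the order embedding and the strong-connectedness saturation of lower classes by hand rather than citing that proposition, which is a harmless inlining of the same argument.
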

\begin{proof}
    The second isomorphism $\eslowlat{\A}\cong\eslowlat{\sigma(\A)}$ is trivial.
    To get the first isomorphism $\sigma(\Fam{\A})\cong\eslowlat{\A}$, we define a functor $K\colon \Fam{\A}\to\eslowlat{\A}$ by the following:
    \[
    (A_i)_{i\in I}\quad\overset{K}{\mapsto}\quad \lowset{\{ A_i \mid i\in I \}} = \{ X\in\A \mid \text{There is a morphism }X\to A_i\text{ for some }i\in I \}.
    \]
    We can see that the functor $K$ is surjective on objects and full.
    Thus, it follows from \cref{prop:full-esssurj-srgconn} that $\sigma(\Fam{\A})\cong\sigma(\eslowlat{\A})\cong\eslowlat{\A}$.    
\end{proof}

\begin{example}
    We follow the notation used in \cref{eg:srg_conn}\ref{eg:srg_conn-cospan}.
    The category $\Cospan$ is locally connected since it is a presheaf category.
    The poset $\sigma(\conn{\Cospan})$ is displayed as follows:
    \begin{equation*}
        \begin{tikzpicture}[scale=0.7]
            \node (a1) at (-1,3) {1};
            \node (a) at (0,4) {1};
            \node (a2) at (1,3) {1};
            \node (a*) at (0,3.35) {};
            \node at (-2,3.35) {$S_5\colon$};
            \draw[->](a1)--(a);
            \draw[->](a2)--(a);
            \node (b1) at (-4,0) {1};
            \node (b) at (-3,1) {1};
            \node (b2) at (-2,0) {$\varnothing$};
            \node (b*) at (-3,0.35) {};
            \node at (-5,0.35) {$S_2\colon$};
            \draw[->](b1)--(b);
            \draw[->](b2)--(b);
            \node (c1) at (2,0) {$\varnothing$};
            \node (c) at (3,1) {1};
            \node (c2) at (4,0) {1};
            \node (c*) at (3,0.35) {};
            \node at (1,0.35) {$S_3\colon$};
            \draw[->](c1)--(c);
            \draw[->](c2)--(c);
            \node (d1) at (-1,-3) {$\varnothing$};
            \node (d) at (0,-2) {1};
            \node (d2) at (1,-3) {$\varnothing$};
            \node (d*) at (0,-2.65) {};
            \node at (-2,-2.65) {$S_1\colon$};
            \draw[->](d1)--(d);
            \draw[->](d2)--(d);
            \draw (a*) circle[x radius=1.5,y radius=1];
            \draw (b*) circle[x radius=1.5,y radius=1];
            \draw (c*) circle[x radius=1.5,y radius=1];
            \draw (d*) circle[x radius=1.5,y radius=1];
            \node[rotate=45] at (-1.5,1.8) {$<$};
            \node[rotate=-45] at (1.5,1.8) {$>$};
            \node[rotate=-45] at (-1.5,-1.2) {$>$};
            \node[rotate=45] at (1.5,-1.2) {$<$};
        \end{tikzpicture}
    \end{equation*}
    In this case, $\sigma(\conn{\Cospan})$ is small, hence $\sigma(\Cospan)\cong\lowlat{\sigma(\conn{\Cospan})}$ and this isomorphism is displayed as follows:
    \begin{equation*}
        \begin{tikzcd}[row sep=tiny]
            & S_5 & \\
            & S_4\arrow[u,phantom,"<"sloped] & \\
            S_2\arrow[ur,phantom,"<"sloped] & & S_3\arrow[ul,phantom,">"sloped] \\
            & S_1\arrow[ul,phantom,">"sloped]\arrow[ur,phantom,"<"sloped] & \\
            & S_0\arrow[u,phantom,"<"sloped] &
        \end{tikzcd}
        \quad\quad\cong\quad\quad
        \begin{tikzcd}[tiny]
            & \lowset{S_5} & \\
            & \lowset{\{S_2,S_3\}}\arrow[u,phantom,"\subset"sloped] & \\
            \lowset{S_2}\arrow[ur,phantom,"\subset"sloped] & & \lowset{S_3}\arrow[ul,phantom,"\supset"sloped] \\
            & \lowset{S_1}\arrow[ul,phantom,"\supset"sloped]\arrow[ur,phantom,"\subset"sloped] & \\
            & \varnothing\arrow[u,phantom,"\subset"sloped] &
        \end{tikzcd}
    \end{equation*}
\end{example}

\begin{lemma}\label{lem:LX_satisfies_ACC}
    Let $X\in\POS$ be a large poset.
    Then, the following are equivalent:
    \begin{enumerate}
        \item\label{lem:LX_satisfies_ACC-1}
        The large poset $\lowlat{X}$ satisfies ACC.
        \item\label{lem:LX_satisfies_ACC-2}
        The large poset $\eslowlat{X}$ satisfies ACC.
        \item\label{lem:LX_satisfies_ACC-3}
        The large poset $\fglowlat{X}$ satisfies ACC.
        \item\label{lem:LX_satisfies_ACC-4}
        Every lower class in $X$ is finitely generated.
        \item\label{lem:LX_satisfies_ACC-5}
        Let $X=(X,\lowlat{X})$ be the large topological space whose underlying class is $X$ and whose open sets are the lower classes in $X$.
        Then, every open subspace of $X$ is \emph{large-compact}, i.e., every open cover indexed by a class has a finite subcover.
        \item\label{lem:LX_satisfies_ACC-6}
        For every sequence $(x_n)_n$ of elements of $X$, there exists $N\in\bN$ such that for every $n\ge N$, $x_m\ge x_n$ holds for some $m<n$.
    \end{enumerate}
    If $X$ is totally ordered, these conditions are equivalent to that $X$ satisfies ACC.
\end{lemma}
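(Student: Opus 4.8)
The plan is to prove the equivalence of (i)--(vi) as a single cycle, with the backbone being the poset analogue of ``Noetherian $\Leftrightarrow$ every ideal is finitely generated,'' namely the equivalence of (i) and (iv). The implications (i) $\Rightarrow$ (ii) $\Rightarrow$ (iii) are immediate: ACC is inherited by any subposet, and since $\fglowlat{X}\subseteq\eslowlat{X}\subseteq\lowlat{X}$, a strictly ascending chain in one of the smaller posets is in particular one in $\lowlat{X}$.

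To close the loop among (i)--(iv), I would argue (iii) $\Rightarrow$ (iv) $\Rightarrow$ (i). For (iii) $\Rightarrow$ (iv), suppose some lower class $L$ is not finitely generated; then, choosing $a_0\in L$ and recursively $a_{n+1}\in L\setminus\lowset{\{a_0,\dots,a_n\}}$ (possible precisely because no finite down-set exhausts $L$), I obtain a strictly ascending chain $\lowset{\{a_0\}}\subsetneq\lowset{\{a_0,a_1\}}\subsetneq\cdots$ of finitely generated lower classes, contradicting (iii). For (iv) $\Rightarrow$ (i), given an ascending chain $(L_n)_n$ in $\lowlat{X}$, its union $\bigcup_n L_n$ is again a lower class, hence finitely generated by (iv); each of its finitely many generators lies in some $L_n$, so a single stage $L_N$ already contains all generators and the chain stabilizes.

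For (v) I would identify large-compactness of the open subspace $U$ with finite generation of the lower class $U$: the principal down-sets $\{\lowset{a}\mid a\in U\}$ form an open cover, a finite subcover of which is exactly a finite generating set, while conversely any open cover of a finitely generated $U$ is refined to a finite subcover by picking, for each generator, one member of the cover containing it; thus (v) $\Leftrightarrow$ (iv). For (vi), the key observation is that for a sequence $(x_n)_n$ the finitely generated lower classes $L_n:=\lowset{\{x_0,\dots,x_n\}}$ form an ascending chain, with $L_n=L_{n-1}$ holding precisely when $x_n\le x_m$ for some $m<n$; so (vi) asserts exactly that every chain of this special shape eventually stabilizes. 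Then (iv) $\Rightarrow$ (vi) follows by the same union-and-stabilization argument as above, and (vi) $\Rightarrow$ (iii) by contraposition: from a strictly ascending chain $M_0\subsetneq M_1\subsetneq\cdots$ of finitely generated lower classes I would list the finitely many generators of each $M_k$, in order of $k$, into one sequence $(x_n)_n$; reaching the last generator of $M_k$ makes $L_n=M_k$, so between consecutive $M_k$ the chain $(L_n)_n$ strictly increases at least once, producing infinitely many strict increases and violating (vi).

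Finally, for totally ordered $X$ I would use that a finite subset of a chain has a maximum, so a lower class is finitely generated iff it is empty or of the form $\lowset{a}$, i.e.\ iff every non-empty initial segment has a greatest element; this is readily equivalent to $X$ satisfying ACC (an infinite ascending sequence yields an initial segment with no maximum, and conversely the failure of a maximum lets one build such a sequence). I expect the only genuine bookkeeping obstacle to lie in condition (vi): translating a strictly ascending chain of finitely generated lower classes into a single sequence and back requires care with the interleaving of the generators and with the index $N$, whereas all the remaining implications are routine.
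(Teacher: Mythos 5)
Your proof is correct; it uses the same basic ingredients as the paper's but wires them together differently. The paper proves a single six-term cycle (i)$\Rightarrow$(ii)$\Rightarrow$(iii)$\Rightarrow$(iv)$\Rightarrow$(v)$\Rightarrow$(vi)$\Rightarrow$(i): its step (iii)$\Rightarrow$(iv) takes a \emph{maximal} finitely generated lower subclass $M\subseteq L$ and concludes $M=L$ (the maximality-principle phrasing of your recursive choice of $a_{n+1}\in L\setminus\lowset{\{a_0,\dots,a_n\}}$); its step (v)$\Rightarrow$(vi) applies large-compactness to $L=\bigcup_n\lowset{\{x_0,\dots,x_n\}}$ (your (iv)$\Rightarrow$(vi), routed through the topological language); and its step (vi)$\Rightarrow$(i) simply chooses $x_n\in L_{n+1}\setminus L_n$ from a strictly ascending chain of \emph{arbitrary} lower classes. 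This last point is where the two proofs genuinely diverge: because the paper closes its cycle at (i) rather than at (iii), it may work with arbitrary lower classes and so avoids the interleaving-of-generators argument that you rightly identify as the only delicate step of your (vi)$\Rightarrow$(iii); your interleaving does work (each block of generators brings the chain $\lowset{\{x_0,\dots,x_n\}}$ up to $M_k$, forcing infinitely many strict increases), but the paper's choice of target makes that bookkeeping unnecessary. In exchange, your decomposition---the cycle (i)$\Rightarrow$(ii)$\Rightarrow$(iii)$\Rightarrow$(iv)$\Rightarrow$(i) together with (iv)$\Leftrightarrow$(v) and (iv)$\Rightarrow$(vi)$\Rightarrow$(iii)---yields clean direct proofs of (iv)$\Rightarrow$(i) (the finitely many generators of $\bigcup_n L_n$ all lie in one stage) and of (v)$\Rightarrow$(iv) (cover a lower class by its principal down-sets), implications the paper only obtains by composing around its cycle. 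Finally, you actually prove the closing assertion about totally ordered $X$ (via: finitely generated $=$ empty or principal $=$ has a greatest element, which for chains is equivalent to ACC), whereas the paper's written proof omits this entirely; your argument there is correct, with the same implicit use of dependent choice that both proofs make elsewhere.
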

\begin{proof}
    {[\ref{lem:LX_satisfies_ACC-1}$\implies$\ref{lem:LX_satisfies_ACC-2}$\implies$\ref{lem:LX_satisfies_ACC-3}]} Trivial.

    {[\ref{lem:LX_satisfies_ACC-3}$\implies$\ref{lem:LX_satisfies_ACC-4}]}
    By \ref{lem:LX_satisfies_ACC-3}, every non-empty class of finitely generated lower classes contains a maximal element.
    Thus, every non-empty lower class $L\subseteq X$ has a maximal finitely generated lower subclass $M\subseteq L$.
    Then we have $L=M$, hence $L$ is finitely generated.

    {[\ref{lem:LX_satisfies_ACC-4}$\implies$\ref{lem:LX_satisfies_ACC-5}]}
    Let $L\subseteq X$ be a lower class and let $L=\bigcup_{i\in I}L_i$, where all $L_i$ are lower classes and $I$ is a class.
    By \ref{lem:LX_satisfies_ACC-4}, $L$ is generated by finitely many elements $x_1,\dots,x_n\in X$.
    For each $1\le k\le n$, we can choose $i_k\in I$ such that $x_k\in L_{i_k}$.
    Then, we have $L=\bigcup_{1\le k\le n}L_{i_k}$.

    {[\ref{lem:LX_satisfies_ACC-5}$\implies$\ref{lem:LX_satisfies_ACC-6}]}
    Let $(x_n)_n$ be a sequence of elements of $X$.
    Consider the lower class
    \[
    L:=\lowset{\{x_n\mid n\in\bN\}}=\bigcup_{n\in\bN}\lowset{\{ x_k \mid 0\le k\le n \}}.
    \]
    By \ref{lem:LX_satisfies_ACC-5}, there is $N\in\bN$ such that $L=\lowset{\{ x_k \mid 0\le k\le N \}}$, which satisfies the condition of \ref{lem:LX_satisfies_ACC-6}.

    {[\ref{lem:LX_satisfies_ACC-6}$\implies$\ref{lem:LX_satisfies_ACC-1}]}
    Assume that $\lowlat{X}$ does not satisfy ACC.
    Then, there is a strictly ascending sequence $L_0\subsetneq L_1\subsetneq L_2\subsetneq\cdots$ of lower classes in $X$.
    Choosing an element $x_n\in L_{n+1}\setminus L_n$ for each $n$, we can see that the sequence $(x_n)_n$ does not satisfy the condition of \ref{lem:LX_satisfies_ACC-6}.
\end{proof}

\begin{remark}
    A topological space is called \emph{noetherian} if the open subsets satisfy the ascending chain condition.
    Thus, a poset $X$ satisfies the conditions of \cref{lem:LX_satisfies_ACC} if and only if $X$ is a noetherian topological space with the lower set topology.
\end{remark}

\begin{corollary}\label{cor:loc.conn.cat_ACC}
    A locally connected category $\C$ satisfies the ascending chain condition if and only if every lower class in $\conn{\C}$ is finitely generated.
\end{corollary}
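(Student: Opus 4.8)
The plan is to chain together the two main results of this subsection, \cref{thm:srg-conn-of-Fam} and \cref{lem:LX_satisfies_ACC}, reducing the assertion about $\C$ to a condition on the large poset $\sigma(\conn{\C})$. First I would unwind the definition: by \cref{def:ACC_for_cat}, the category $\C$ satisfies ACC precisely when the poset $\sigma(\C)$ satisfies the ordinary ascending chain condition. Since $\C$ is locally connected, the final clause of \cref{thm:srg-conn-of-Fam} supplies the isomorphism $\sigma(\C)\cong\eslowlat{\sigma(\conn{\C})}$, so $\C$ satisfies ACC if and only if the large poset $\eslowlat{\sigma(\conn{\C})}$ satisfies ACC.

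Next I would apply \cref{lem:LX_satisfies_ACC} with $X=\sigma(\conn{\C})$. The equivalence of \ref{lem:LX_satisfies_ACC-2} and \ref{lem:LX_satisfies_ACC-4} there states that $\eslowlat{X}$ satisfies ACC if and only if every lower class in $X$ is finitely generated. Combining this with the previous step, $\C$ satisfies ACC if and only if every lower class in $\sigma(\conn{\C})$ is finitely generated.

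The only remaining point is to transfer finite generation of lower classes from the poset $\sigma(\conn{\C})$ back to the category $\conn{\C}$ itself. This is the same elementary observation already invoked for the second isomorphism in \cref{thm:srg-conn-of-Fam}: lower classes of any category $\A$ are in order-preserving bijection with lower sets of $\sigma(\A)$, via preimage along the quotient $\A\to\sigma(\A)$, and a lower class $\lowset{S}$ is finitely generated exactly when the corresponding lower set of components is. Indeed, a finite generating set in $\A$ projects to a finite generating set of strongly connected components, and conversely a finite family of components lifts to a finite family of representatives generating the same lower class. Hence every lower class in $\conn{\C}$ is finitely generated if and only if every lower class in $\sigma(\conn{\C})$ is, which closes the equivalence.

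I expect no genuine obstacle: the corollary is a bookkeeping consequence of the two preceding results, and the single step requiring a line of justification is the last bijection between lower classes of $\conn{\C}$ and lower sets of $\sigma(\conn{\C})$, which is already implicit in the proof of \cref{thm:srg-conn-of-Fam}.
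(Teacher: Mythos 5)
Your proposal is correct and is exactly the argument the paper intends (the paper states \cref{cor:loc.conn.cat_ACC} without proof as an immediate consequence of \cref{thm:srg-conn-of-Fam} and \cref{lem:LX_satisfies_ACC}): you chain $\sigma(\C)\cong\eslowlat{\sigma(\conn{\C})}$ with the equivalence \ref{lem:LX_satisfies_ACC-2}$\Leftrightarrow$\ref{lem:LX_satisfies_ACC-4}, and the final transfer between lower classes of $\conn{\C}$ and lower sets of $\sigma(\conn{\C})$ is the same triviality the paper invokes for $\eslowlat{\A}\cong\eslowlat{\sigma(\A)}$. No gaps.
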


\subsection{Categories of group actions}\label{subsection6.2}
We recall the definition of atomic Grothendieck toposes:

\begin{definition}\quad
    \begin{enumerate}
        \item
        An object $A\in\E$ in a Grothendieck topos $\E$ is called an \emph{atom} if $A$ is non-zero and its only subobjects are $A$ and $0$.
        Let $\atom{\E}\subseteq\E$ denote the full subcategory consisting of all atoms in $\E$.
        \item
        A Grothendieck topos $\E$ is \emph{atomic} if every object in $\E$ is a disjoint union of atoms.
    \end{enumerate}
\end{definition}

\begin{remark}\label{rem:srg-conn-of-atomic-topos}
    An object in an atomic Grothendieck topos $\E$ is connected if and only if it is an atom.
    In particular, we have an isomorphism $\sigma(\E)\cong\eslowlat{\sigma(\atom{\E})}$ as (large) posets.
\end{remark}

\begin{definition}
    Let $G$ be a topological group.
    We define a category $\opensub{G}$ as follows:
    \begin{itemize}
        \item
        An object in $\opensub{G}$ is an open subgroup of $G$.
        \item
        A morphism $H\to K$ in $\opensub{G}$ is an element $g\in G$ such that $gHg^{-1}\subseteq K$.
        \item
        The composition of $g\colon H\to K$ and $g'\colon K\to L$ is defined as $g'g\in G$.
    \end{itemize}
\end{definition}

\begin{lemma}\label{lem:atom_is_opensub}
    Let $G\-\Set$ be the category of continuous actions of a topological group $G$ on a set.
    Then, there is an equivalence $\atom{G\-\Set}\simeq\opensub{G}$.
\end{lemma}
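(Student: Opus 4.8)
The plan is to exhibit an explicit functor $\Phi\colon\opensub{G}\to\atom{G\-\Set}$ and show it is full, faithful, and essentially surjective. The natural candidate is the coset construction: to an open subgroup $H\le G$ assign the set $G/H$ of left cosets, equipped with the evident left $G$-action $g'\cdot(gH):=g'gH$. First I would verify that this action is continuous, i.e., that the stabilizer of each point is open. The stabilizer of the coset $gH$ is exactly $gHg^{-1}$, which is open because $H$ is open and conjugation by $g$ is a homeomorphism; hence $G/H$ is a genuine object of $G\-\Set$. On morphisms, an element $g\in G$ with $gHg^{-1}\subseteq K$ should be sent to the $G$-equivariant map $\Phi(g)\colon G/H\to G/K$ given by $xH\mapsto xg^{-1}K$. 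I would check this is well-defined (if $xH=x'H$ then $x^{-1}x'\in H$, so $g(x^{-1}x')g^{-1}\in K$, giving $xg^{-1}K=x'g^{-1}K$) and equivariant, and that it respects composition as defined in $\opensub{G}$; the appearance of $g^{-1}$ is the bookkeeping needed to make composition covariant.

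\textbf{Atoms are transitive continuous actions.}
The key structural fact is that an object of $G\-\Set$ is an atom precisely when it is a nonempty transitive $G$-set. I would argue that in $G\-\Set$ the subobjects of an action $X$ are exactly the $G$-invariant subsets, so $X$ has no proper nonzero subobject iff $X$ is nonempty and has a single orbit. Every continuous transitive action is isomorphic, via the orbit--stabilizer correspondence $gH\mapsto g\cdot x_0$, to $G/H$ where $H=\mathrm{Stab}(x_0)$ is the stabilizer of a chosen basepoint; continuity of the action forces $H$ to be open. This gives essential surjectivity of $\Phi$. (This step also justifies \cref{rem:srg-conn-of-atomic-topos} in this instance: the connected objects of $G\-\Set$ are the transitive actions, i.e. the atoms.)

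\textbf{Full faithfulness.}
The remaining and most delicate point is that $\Phi$ induces a bijection $\opensub{G}(H,K)\xrightarrow{\ \sim\ }G\-\Set(G/H,G/K)$. A $G$-equivariant map $\psi\colon G/H\to G/K$ is determined by the image $\psi(eH)=g^{-1}K$ of the identity coset, and equivariance forces $\psi(xH)=xg^{-1}K$; the well-definedness constraint $\psi(hH)=\psi(eH)$ for all $h\in H$ translates exactly into $hg^{-1}K=g^{-1}K$, i.e. $gHg^{-1}\subseteq K$, which is the condition defining a morphism $H\to K$ in $\opensub{G}$. Thus every equivariant map arises from such a $g$, giving fullness; and two elements $g,g'$ induce the same map iff $g^{-1}K=g'^{-1}K$, i.e. $g'g^{-1}\in K$, so I expect the morphisms of $\opensub{G}$ to need quotienting to get honest faithfulness. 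The main obstacle I anticipate is precisely this: the hom-sets of $\opensub{G}$ as literally defined are elements $g\in G$, whereas the induced map only depends on the coset $Kg$ (equivalently on $g$ up to the relation $g\sim kg$ for $k\in K$). I would check carefully whether the intended reading of the definition of $\opensub{G}$ already identifies such $g$ — i.e. whether composition is meant to be taken in a suitable quotient so that $\opensub{G}$ is the orbit category of open subgroups — and phrase the faithfulness argument accordingly, since the clean equivalence holds only after this identification.
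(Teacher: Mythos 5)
Your argument is correct, and it differs from the paper's proof only in the sense that the paper offers no argument at all: its proof of \cref{lem:atom_is_opensub} is a pointer to \cite[C3.5.9(b)]{johnstone2002sketches}. What you write out --- continuity of the action on $G/H$ via openness of the stabilizers $gHg^{-1}$, the identification of atoms with nonempty transitive continuous actions through invariant subsets, orbit--stabilizer giving essential surjectivity, and the computation of all equivariant maps $G/H\to G/K$ --- is precisely the standard proof behind that citation, so your proposal supplies the details the paper delegates. The point where you hesitate is also the point where you are right to hesitate: with the paper's literal definition of $\opensub{G}$, in which a morphism $H\to K$ is an \emph{element} $g\in G$ with $gHg^{-1}\subseteq K$, your comparison functor $\Phi$ is full and essentially surjective but not faithful, and in fact no equivalence can exist at all, since equivalences preserve hom-set cardinalities: for $H=K=G$ the hom-set $\opensub{G}(G,G)$ is all of $G$, while the one-point $G$-set $G/G$ has a single endomorphism. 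So the lemma is true only under the intended reading in which a morphism $H\to K$ is a coset $Kg$ with $gHg^{-1}\subseteq K$ (equivalently, an element $g$ taken up to $g\sim kg$ for $k\in K$), exactly as your faithfulness computation ($\Phi(g)=\Phi(g')$ iff $g'g^{-1}\in K$) shows. This definitional slip is harmless for the paper's downstream use in \cref{thm:srg_conn_of_G-Set}, where only the \emph{existence} of a morphism $H\to K$ matters for computing $\sigma(G\-\Set)$ and the ascending chain condition, and it does not affect your proof: once $\opensub{G}$ is read as the coset (orbit) category, your construction of $\Phi$, your characterization of atoms, and your full-faithfulness argument go through verbatim and establish the equivalence.
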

\begin{proof}
    See \cite[C3.5.9(b)]{johnstone2002sketches}.
\end{proof}

\begin{theorem}\label{thm:srg_conn_of_G-Set}
    Let $G$ be a topological group.
    \begin{enumerate}
        \item\label{thm:srg_conn_of_G-Set-1}
        The category $G\-\Set$ of continuous actions of $G$ satisfies ACC if and only if every lower set in $\opensub{G}$ is finitely generated.
        \item\label{thm:srg_conn_of_G-Set-2}
        There is an isomorphism $\sigma(G\-\Set)\cong\lowlat{\sigma(\opensub{G})}$ as posets.
        In particular, the class of all strongly connected components in $G\-\Set$ is small.
    \end{enumerate}
\end{theorem}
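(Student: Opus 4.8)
The plan is to exhibit $G\-\Set$ as an atomic Grothendieck topos and then assemble the prior results on strongly connected components in such toposes. First I would recall that for a topological group $G$, the category $G\-\Set$ of continuous $G$-actions is an atomic Grothendieck topos (the standard description of $G\-\Set$ as sheaves on the atomic site $\opensub{G}$; see \cite{johnstone2002sketches}). In particular $G\-\Set$ is locally connected, its connected objects coincide with its atoms, so $\conn{G\-\Set}=\atom{G\-\Set}$, and by \cref{lem:atom_is_opensub} there is an equivalence $\atom{G\-\Set}\simeq\opensub{G}$.

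I would prove \ref{thm:srg_conn_of_G-Set-2} first. Applying \cref{rem:srg-conn-of-atomic-topos} to the atomic topos $G\-\Set$ gives an isomorphism $\sigma(G\-\Set)\cong\eslowlat{\sigma(\atom{G\-\Set})}$. The equivalence $\atom{G\-\Set}\simeq\opensub{G}$ yields $\sigma(\atom{G\-\Set})\cong\sigma(\opensub{G})$ by \cref{prop:full-esssurj-srgconn}, so $\sigma(G\-\Set)\cong\eslowlat{\sigma(\opensub{G})}$. The crucial observation is that $\opensub{G}$ is a \emph{small} category, since its objects are open subgroups of $G$ and hence form a set; therefore $\sigma(\opensub{G})$ is a small poset, and for a small poset every lower class is automatically essentially small, whence $\eslowlat{\sigma(\opensub{G})}=\lowlat{\sigma(\opensub{G})}$. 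Combining these identifications gives $\sigma(G\-\Set)\cong\lowlat{\sigma(\opensub{G})}$.

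For \ref{thm:srg_conn_of_G-Set-1}, I would deduce it from \ref{thm:srg_conn_of_G-Set-2}. By definition, $G\-\Set$ satisfies ACC exactly when $\sigma(G\-\Set)$ does, and by the isomorphism just established this is equivalent to $\lowlat{\sigma(\opensub{G})}$ satisfying ACC. Applying the equivalence \cref{lem:LX_satisfies_ACC}\ref{lem:LX_satisfies_ACC-1}$\Leftrightarrow$\ref{lem:LX_satisfies_ACC-4} to the poset $X=\sigma(\opensub{G})$, this holds if and only if every lower class of $\sigma(\opensub{G})$ is finitely generated. Since lower classes of the category $\opensub{G}$ correspond bijectively to lower sets of the poset $\sigma(\opensub{G})$, with finite generation transported across this correspondence, the condition is precisely that every lower set in $\opensub{G}$ is finitely generated.

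The main difficulty here is bookkeeping rather than substance: all the heavy machinery is already available, so the real work lies in justifying the passage from $\eslowlat{}$ to $\lowlat{}$ via the smallness of $\opensub{G}$, and in checking that the equivalences $\conn{G\-\Set}=\atom{G\-\Set}\simeq\opensub{G}$ transport lower classes and finite generation faithfully. The one point warranting genuine care is confirming that $G\-\Set$ is in fact atomic—so that \cref{rem:srg-conn-of-atomic-topos} and \cref{lem:atom_is_opensub} apply—which relies essentially on the \emph{continuity} hypothesis on the actions.
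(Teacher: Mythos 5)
Your proposal is correct and follows essentially the same route as the paper, which deduces the theorem from \cref{cor:loc.conn.cat_ACC}, \cref{rem:srg-conn-of-atomic-topos}, and \cref{lem:atom_is_opensub}; your only deviations are cosmetic (you derive \ref{thm:srg_conn_of_G-Set-1} from \ref{thm:srg_conn_of_G-Set-2} via \cref{lem:LX_satisfies_ACC} rather than quoting \cref{cor:loc.conn.cat_ACC} directly, which amounts to the same argument). Your explicit justification that $\eslowlat{\sigma(\opensub{G})}=\lowlat{\sigma(\opensub{G})}$ via the smallness of $\opensub{G}$ is a point the paper leaves implicit, and it is exactly the right observation.
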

\begin{proof}
    These follow from \cref{cor:loc.conn.cat_ACC,rem:srg-conn-of-atomic-topos,lem:atom_is_opensub}.
\end{proof}

\begin{example}
    Let $\bZ$ be the (discrete) group of integers.
    Since $\bZ$ is abelian, the morphisms in $\opensub{\bZ}$ are simply the inclusions.
    Thus, we have $\sigma(\bZ\-\Set)\cong\lowlat{\sub{\bZ}}$, where $\sub{\bZ}$ is the poset of all subgroups of $\bZ$ with the inclusions.
    Moreover, $\bZ\-\Set$ does not satisfy ACC since the lowerset $\lowset{\{ p\bZ \mid p\colon\text{a prime number} \}}$ in $\sub{\bZ}$ is not finitely generated.
\end{example}

\begin{example}
    Let $G:=\aut{\bN}$ be the topological group of all bijections on $\bN$ whose topology is induced from the product topology on $\bN^\bN$.
    A continuous action of $G$ is called a \emph{nominal set}, and the category $G\-\Set$ is equivalent to the \emph{Schanuel topos}.
    We now show that the category $G\-\Set$ does not satisfy ACC.
    To show this, consider the open subgroups
    \[
    H_n:=\{ \sigma\in G \mid \forall k<n,~\sigma(k)<n \}
    \]
    of $G$ for each $n\ge 1$.
    Then, it follows that for each open subgroup $K\subseteq G$, there is a morphism $K\to H_n$ in $\opensub{G}$ if and only if there exists a subset $A\subset\bN$ of order $n$ satisfying $\sigma(A)\subseteq A$ for every $\sigma\in K$.
    In particular, there is no morphism $H_n\to H_m$ in $\opensub{G}$ whenever $n>m$.
    Therefore, the lower set $\lowset{\{ H_n \mid n\ge 1 \}}$ in $\opensub{G}$ is not finitely generated.
\end{example}

\printbibliography

@book{adamek1994locally,
    AUTHOR = {Ad\'{a}mek, Ji\v{r}\'{\i} and Rosick\'{y}, Ji\v{r}\'{\i}},
     TITLE = {Locally Presentable and Accessible Categories},
    SERIES = {London Mathematical Society Lecture Note Series},
    VOLUME = {189},
 %PUBLISHER = {Cambridge University Press, Cambridge},
      YEAR = {1994},
     %PAGES = {xiv+316},
}

@misc{kawase2023birkhoffs,
      title={Birkhoff's variety theorem for relative algebraic theories}, 
      author={Yuto Kawase},
      year={2023},
      eprint={2304.04382},
      archivePrefix={arXiv},
      primaryClass={math.CT}
}

@article {bloom1976varieties,
    AUTHOR = {Bloom, Stephen L.},
     TITLE = {Varieties of ordered algebras},
   JOURNAL = {J. Comput. System Sci.},
    VOLUME = {13},
      YEAR = {1976},
    NUMBER = {2},
     PAGES = {200--212},
}

@article {adamek2012birkhoffs,
    AUTHOR = {Ad\'{a}mek, Ji\v{r}\'{\i} and Rosick\'{y}, Ji\v{r}\'{\i} and Vitale, Enrico M.},
     TITLE = {Birkhoff's variety theorem in many sorts},
   JOURNAL = {Algebra Universalis},
    VOLUME = {68},
      YEAR = {2012},
    NUMBER = {1-2},
     PAGES = {39--42},
}

@book{johnstone2002sketches,
    AUTHOR = {Johnstone, Peter T.},
     TITLE = {Sketches of an Elephant: A Topos Theory Compendium. {V}ol. 2},
    SERIES = {Oxford Logic Guides},
    VOLUME = {44},
 %PUBLISHER = {The Clarendon Press, Oxford University Press, Oxford},
      YEAR = {2002},
     %PAGES = {i--xxii, 469--1089 and I1--I71},
}

@article {barrpare1980molecular,
    AUTHOR = {Barr, Michael and Par\'{e}, Robert},
     TITLE = {Molecular toposes},
   JOURNAL = {J. Pure Appl. Algebra},
    VOLUME = {17},
      YEAR = {1980},
    NUMBER = {2},
     PAGES = {127--152},
}

@article{birkhoff1935structure,
    author={Birkhoff, Garrett},
    title={On the structure of abstract algebras},
    journal={Mathematical Proceedings of the Cambridge Philosophical Society},
    volume={31},
    year={1935},
    number={4},
    pages={433--454},
}

@incollection{linton1969outline,
    AUTHOR = {Linton, F. E. J.},
     TITLE = {An outline of functorial semantics},
 BOOKTITLE = {Sem. on {T}riples and {C}ategorical {H}omology {T}heory ({ETH}, {Z}\"{u}rich, 1966/67)},
     PAGES = {7--52},
 %PUBLISHER = {Springer, Berlin},
      YEAR = {1969},
}

@article{palmgren2007partial,
    AUTHOR = {Palmgren, E. and Vickers, S. J.},
     TITLE = {Partial Horn logic and cartesian categories},
   JOURNAL = {Ann. Pure Appl. Logic},
    VOLUME = {145},
      YEAR = {2007},
    NUMBER = {3},
     PAGES = {314--353},
}

@article{carbonivitale1998,
    AUTHOR = {Carboni, A. and Vitale, E. M.},
     TITLE = {Regular and exact completions},
   JOURNAL = {J. Pure Appl. Algebra},
    VOLUME = {125},
      YEAR = {1998},
    NUMBER = {1-3},
     PAGES = {79--116},
}

@book{golan2003semiring,
    AUTHOR = {Golan, Jonathan S.},
     TITLE = {Semirings and Affine Equations over Them: Theory and Applications},
    SERIES = {Mathematics and its Applications},
    VOLUME = {556},
 %PUBLISHER = {Kluwer Academic Publishers Group, Dordrecht},
      YEAR = {2003},
     %PAGES = {xiv+241},
}

@article{adamek2004purequotient,
    AUTHOR = {Ad\'{a}mek, J. and Rosick\'{y}, J.},
     TITLE = {On pure quotients and pure subobjects},
   JOURNAL = {Czechoslovak Math. J.},
    VOLUME = {54(129)},
      YEAR = {2004},
    NUMBER = {3},
     PAGES = {623--636},
}

\begin{table}[b]
    \centering
    \rowcolors{2}{}{gray!25}
    \begin{tabular}{cccc}
        \hline
        \multrow{Locally finitely \\ presentable \\ category} & Objects & \multrow{Number of \\ strongly connected \\ components} & \multrow{Satisfaction \\ of ACC} \\
        \hline\hline
        $\Set$ & sets & $2$ & true \\
        $\Pos$ & posets & $2$ & true \\
        $\Mon$ & monoids & $1$ & true \\
        $\Grp$ & groups & $1$ & true \\
        $\Ab$ & abelian groups & $1$ & true \\
        $\mathbf{SGrp}$ & semigroups & infinity & false \\
        $\Ring$ & rings & infinity & false \\
        $\SLat$ & (join-)semilattices & $2$ & true \\
        $\BSLat$ & bounded (join-)semilattices & $1$ & true \\
        $\Lat$ & lattices & $2$ & true \\
        $\BLat$ & bounded lattices & infinity & false \\
        $\Set^n$ & $n$-sorted sets & $2^n$ & true \\
        $\Set^S$ & \multrow{$S$-sorted sets \\ ($S$: an infinite set)} & infinity & false \\
        $\Set_*$ & pointed sets & $1$ & true \\
        $n/\Set$ & sets with $n$ constants & \multrow{the $n$-th \\ Bell number} & true \\
        $S/\Set$ & \multrow{sets with $S$-indexed constants \\ ($S$: an infinite set)} & infinity & false \\
        $\End$ & sets with an endomorphism & infinity & false \\
        $\mathbf{Idem}$ & \multrow{sets with an idempotent \\ endomorphism} & $2$ & true \\
        $\mathbf{Aut}$ & sets with an automorphisms & infinity & false \\
        $\Quiv$ & quivers (or directed graphs) & infinity & false \\
        $\mathbf{RQuiv}$ & reflexive quivers & $2$ & true \\
        $\Cat$ & small categories & $2$ & true \\
        $\Set^\to$ & maps & $3$ & true \\
        $\Cospan$ & cospans of sets & $6$ & true \\
        $\Set^{\bbomega^\op}$ & $\bbomega^\op$-chains of sets & infinity & false \\
        $\Set^{\bbomega}$ & $\bbomega$-chains of sets & infinity & \textbf{true} \\
        $\Set^{\bowtie}$ & see \cref{eg:srg_conn}\ref{eg:srg_conn-doblcospan} & infinity & false \\
        $\Set^\bX$ & see \cref{eg:srg_conn}\ref{eg:srg_conn-X} & infinity & false \\
        $\Set^{\Delta^\op}$ & simplicial sets & $2$ & true \\
        $\Set^{\bG^\op}$ & globular sets & infinity & false \\
        $\mathbf{Nom}$ & nominal sets & infinity & false \\
        $\mathbf{URel}$ & sets with a unary relation & $3$ & true \\
        $\mathbf{BRel}$ & sets with a binary relation & infinity & false \\
        $\mathbf{SRel}$ & sets with a symmetric relation & infinity & false \\
        $\mathbf{RSRel}$ & \multrow{sets with a reflexive \\ symmetric relation} & $2$ & true \\
        $\mathbf{PER}$ & \multrow{sets with a symmetric \\ transitive relation} & $3$ & true \\
        $\mathbf{PreOrd}$ & preordered sets & $2$ & true \\
        $\mathbf{ERel}$ & sets with an equivalence relation & $2$ & true \\
        \hline
    \end{tabular}
    \caption{The ascending chain condition for LFP categories}
    \addcontentsline{toc}{section}{Table A. The ascending chain condition for LFP categories}
    \label{tab:ACC_for_LFP}
\end{table}

\end{document}